\theoremstyle{plain}
\newtheorem{theorem}{Theorem}[section]
\newtheorem{corollary}[theorem]{Corollary}
\newtheorem{proposition}[theorem]{Proposition}
\newtheorem{lemma}[theorem]{Lemma}
\theoremstyle{definition}
\newtheorem{definition}[theorem]{Definition}
\theoremstyle{remark}
\newtheorem{example}[theorem]{Example}
\newtheorem{remark}[theorem]{Remark}
\newcommand\mylabelenumi[1]{{\upshape (#1)}}
\newcommand\enumiref[1]{\mylabelenumi{\ref{#1}}}
\newcommand{\rd}{d}
\newcommand{\set}[2]{\{#1:#2\}}
\newcommand{\bigset}[2]{\bigl\{#1:#2\bigr\}}
\newcommand{\biggset}[2]{\biggl\{#1:#2\biggr\}}
\newcommand{\pmat}[1]{\begin{pmatrix}#1\end{pmatrix}}
\newcommand{\I}{{\rm i}}
\newcommand{\e}{{\rm e}}
\newcommand{\mD}{\mathcal D}
\newcommand{\C}{\mathbb C}
\newcommand{\N}{\mathbb N}
\newcommand{\R}{\mathbb R}
\newcommand{\Z}{\mathbb Z}
\renewcommand{\phi}{\varphi}
\newcommand{\eps}{\varepsilon}
\DeclareMathOperator{\linspan}{span}
\DeclareMathOperator{\range}{Im}
\DeclareMathOperator{\diag}{diag}
\DeclareMathOperator{\re}{Re}	
\begin{document}

\title{On the Spectral Decomposition of Dichotomous and Bisectorial Operators}
\author{%
Monika Winklmeier%
\thanks{Departamento de Matem\'aticas, Universidad de los Andes, Cra. 1a No 18A-70, Bogot\'a, Colombia. mwinklme@uniandes.edu.co.},
Christian Wyss%
\thanks{Fachgruppe Mathematik und Informatik, Bergische Universit\"at Wuppertal, Gau{\ss}str. 20, 42097 Wuppertal, Germany. wyss@math.uni-wuppertal.de.}
}

\maketitle
\begin{abstract}
   \noindent
  For an unbounded operator $S$ on a Banach space
  the existence of invariant subspaces corresponding to its spectrum in
  the left and right half-plane is proved.
  The general assumption on $S$ is the
  uniform boundedness of the resolvent 
  along the imaginary axis.
  The projections associated with the invariant subspaces
  are bounded if $S$ is strictly dichotomous,
  but may be unbounded in general.
  Explicit formulas for these projections in terms of resolvent integrals
  are derived and used to obtain
  perturbation theorems for dichotomy.
  All results apply, with certain simplifications, to bisectorial operators.

  \bigskip\noindent
  \textbf{Mathematics Subject Classification (2010).}\\
  Primary 47A15; Secondary 47A10, 47A55, 47A60, 47B44.

  \smallskip\noindent
  \textbf{Keywords.}
  Bisectorial operator; dichotomous operator; unbounded projection; 
  invariant subspace; $p$-subordinate perturbation.

  \smallskip\noindent
  The final publication is available at
  \href{http://link.springer.com/article/10.1007/s00020-015-2218-5}{link.springer.com}

\end{abstract}

\section{Introduction}

Let $S$ be a densely defined linear operator $S$ on a Banach space $X$ such that
the imaginary axis belongs to the resolvent set $\varrho(S)$.
A fundamental question is whether there exist closed invariant subspaces 
$X_+$ and $X_-$ which correspond to the spectrum of $S$ in the
right and left half-plane $\C_+$ and $\C_-$, respectively.
$S$ is called \emph{dichotomous} if these subspaces exist and yield
a decomposition $X=X_+\oplus X_-$.
If in addition the restrictions $-S|X_+$ and $S|X_-$ generate
exponentially decaying semigroups, then $S$ is 
\emph{exponentially dichotomous}.
Dichotomy and exponential dichotomy have found a wide range of applications,
e.g.\ to
canonical factorisation and Wiener-Hopf integral operators
\cite{BGK-wienerhopf,BGK1986},
and to block operator matrices and Riccati equations
\cite{LT2001,LRR2002,ran-mee,tretter-wyss}.
An extensive account may be found in the monograph \cite{vdmee-book}.

The investigation of dichotomous operators was started
by H.~Bart, I.~Gohberg and M.A.~Kaashoek in \cite{BGK1986},
where they established a sufficient condition for dichotomy:
If a strip around the imaginary axis belongs to $\varrho(S)$,
the resolvent $(S-\lambda)^{-1}$ is uniformly bounded on this strip, i.e.,
\begin{equation}\label{eq:intro-bnd}
  \sup_{|\re\lambda|\leq h}\|(S-\lambda)^{-1}\|<\infty
\end{equation}
for some $h>0$, and
the integral 
\begin{equation}\label{eq:intro-int}
  Px=\frac{1}{2\pi\I}\int_{h-\I\infty}^{h+\I\infty}
  \frac{1}{\lambda^2}(S-\lambda)^{-1}S^2x\,d\lambda,
  \qquad x\in\mD(S^2),
\end{equation}
extends to a bounded operator $P$ on $X$, then $S$ is dichotomous and
$P$ is the projection onto $X_+$ along $X_-$.
On the other hand, there are simple examples where
\eqref{eq:intro-bnd} holds, $X_\pm$ exist, but
$X_+\oplus X_-\subset X$ is only dense
and $S$ is not dichotomous.

In our first main result, Theorem~\ref{theo:splitting},
we prove 
that \eqref{eq:intro-bnd} is in fact sufficient for
the existence of invariant subspaces,
even if $S$ is not dichotomous:
If \eqref{eq:intro-bnd} holds, then the subspaces
\[G_\pm=\set{x\in X}{(S-\lambda)^{-1}x\text{ has a bounded analytic 
    extension to }\overline{\C_\mp}}\]
are closed, $S$-invariant and satisfy
$\sigma(S|G_\pm)\subset\C_\pm$.
Moreover, the integral \eqref{eq:intro-int} extends to a closed,
possibly unbounded operator $P$, which is the projection onto $G_+$
along $G_-$.
Finally, $P$ is bounded if and only if $S$ is
dichotomous with respect to $X=G_+\oplus G_-$.
This decomposition
has the additional property that the resolvents of $S|G_\pm$
are uniformly bounded on $\C_\mp$, and we call $S$
\emph{strictly dichotomous} in this case.

One important class of operators satisfying \eqref{eq:intro-bnd} are
\emph{bisectorial} and \emph{almost bisectorial} operators, 
for which $\I\R\subset\varrho(S)$ and
\begin{equation}\label{eq:intro-bisect}
  \|(S-\lambda)^{-1}\|\leq\frac{M}{|\lambda|^\beta}, \qquad 
  \lambda\in\I\R\setminus\{0\},
\end{equation}
with some $M>0$ and $\beta=1$ in the bisectorial, $0<\beta<1$ in the
almost bisectorial case.
In Theorem~\ref{theo:almbisect}
we show that for bisectorial and almost bisectorial $S$ equation
\eqref{eq:intro-int} simplifies to
\begin{equation}\label{eq:intro-int2}
  Px=\frac{1}{2\pi\I}\int_{h-\I\infty}^{h+\I\infty}
  \frac{1}{\lambda}(S-\lambda)^{-1}Sx\,d\lambda,
  \qquad x\in\mD(S),
\end{equation}
and that the restrictions $S|G_\pm$ are (almost) sectorial, i.e.,
they satisfy the resolvent estimate \eqref{eq:intro-bisect} on $\C_\mp$.
The results for bisectorial $S$ were to some extent obtained 
by  W.~Arendt and A.~Zamboni in \cite{arendt-zamboni}.
In particular, they constructed closed projections 
by using a rearranged version of \eqref{eq:intro-int2};
our construction of $P$ in 
Theorem~\ref{theo:splitting} is in fact a generalisation of their 
method
to the weaker setting of \eqref{eq:intro-bnd}.
Note that bisectoriality does not imply dichotomy and hence unbounded
$P$ are still possible here as Example~\ref{ex:almbisect} shows.

In addition to the characterisation of dichotomy
in Theorems~\ref{theo:splitting} and~\ref{theo:almbisect},
we also derive perturbation results.
Theorem~\ref{theo:dichot-pert} states that 
if $S$ is strictly dichotomous and $T$ is 
another densely defined operator on $X$ such that
\begin{enumerate}
\item
  $\set{\lambda\in\C}{|\re\lambda|\leq h}\subset\varrho(S)\cap\varrho(T)$,
\item
  $\sup_{|\re\lambda|\leq h}|\lambda|^{1+\eps}
  \|(S-\lambda)^{-1}-(T-\lambda)^{-1}\|<\infty$ for some $\eps>0$, and
\item
  $\mD(S^2)\cap\mD(T^2)\subset X$ is dense,
\end{enumerate}
then $T$ is strictly dichotomous too.
A similar result was obtained in \cite{BGK1986}, but under
significantly stronger conditions, namely $\eps=1$ in (ii) and
$\mD(T^2)\subset\mD(S^2)$ instead of (iii).
It is precisely the existence of the closed projections
which allows us to use the more general condition (iii).
If in addition $S$ is (almost) bisectorial, then
$T$ is  (almost) bisectorial too and
condition (iii) may be relaxed to
\begin{enumerate}
\item[(iii$'$)] $\mD(S)\cap\mD(T)\subset X$ is dense,
\end{enumerate}
see Theorem~\ref{theo:dichot-pert-beta}.
In \cite{tretter-wyss} this result was obtained for bisectorial $S$
and $T=S+R$ where $R$ is $p$-sub\-or\-di\-nate to $S$, i.e.,
$\mD(S)\subset \mD(R)$ and
$\|Rx\|\leq c\|x\|^{1-p}\|Sx\|^p$, $x\in\mD(S)$,
with constants $c>0$ and $0\leq p<1$.
In this case (ii) holds with $\eps=1-p$ and (iii$'$) is trivially
satisfied since $\mD(S)=\mD(T)$.
On the other hand, (iii) and (iii$'$) allow for a wider
class of perturbations with $\mD(S)\neq\mD(T)$,
e.g.\ if $T=S+R$ holds only in an extrapolation space,
see Example~\ref{ex:ham}.

Most of our results remain valid for non-densely defined $S$.
In particular, this is true for the main Theorems~\ref{theo:splitting}
and~\ref{theo:almbisect}, but not for the perturbation results.
Unless explicitly stated otherwise, linear operators are are not assumed to be densely defined.

There exist different approaches to dichotomy:
For bisectorial $S$ an equivalent condition for dichotomy
in terms of complex powers of $S$ is given in  \cite{dore-venni}.
For generators of $C_0$-semigroups, exponential dichotomy
is equivalent to the hyperbolicity of the semigroup
\cite{kaashoek-verduyn-lunel},
the latter being important e.g.\  in the study of nonlinear 
evolution equations.
Finally, our approach is connected to
(bounded as well as unbounded) functional calculus.

This article is organised as follows:
In Section~\ref{sec:proj} we collect general facts about unbounded projections,
in particular Lemma~\ref{lem:Apm} on the existence of closed projections
corresponding to invariant subspaces.
Section~\ref{sec:dichot} contains the definition and basic properties
of dichotomous operators.
Here we also show that dichotomy alone does not uniquely determine the
decomposition $X=X_+\oplus X_-$ while strict dichotomy does.
In Section~\ref{sec:splitting} we derive our main Theorem~\ref{theo:splitting}
in the general setting \eqref{eq:intro-bnd}.
The case of bisectorial and almost bisectorial operators is then considered in
Section~\ref{sec:bisect}.
For such operators
we also provide some results on the 
location of their spectrum 
and derive yet another integral representation for $P$,
which was used in \cite{LT2001,tretter-wyss}.
Section~\ref{sec:Mpm} is devoted to the subtle problem that the restrictions
$S|G_\pm$ are not necessarily densely defined even if $S$ is.
Following \cite{BGK1986} we consider certain subspaces $M_\pm\subset G_\pm$
for which the parts of $S$ in $M_\pm$ are densely defined.
We derive conditions for $M_\pm=G_\pm$,
which is in turn equivalent to $S|G_\pm$ being densely defined.
The perturbation results are contained in Section~\ref{sec:pert},
and in Section~\ref{sec:examp} we provide some additional examples
to illustrate our theory.
Finally, as an application, we show the dichotomy of
a Hamiltonian operator matrix from
control theory whose off-diagonal entries map into extrapolation spaces.
In the previous results 
\cite{LRR2002,wyss-unbctrlham,tretter-wyss}
only settings without
extrapolation spaces or with the additional
assumption of a Riesz basis of eigenvectors could be handled.

We use the following notation: 
The open right and open left half-plane is denoted by $\C_+$ and $\C_-$, respectively.
If $X, Y$ are Banach spaces, then $T(X\to Y)$ denotes a linear operator from a (not necessarily dense) domain in $X$ into $Y$.
If $Z\subset X$, then we denote the restriction of $T$ to $Z$ by $T|Z$.
The set of all bounded linear operators from $X$ to $Y$ is denoted by $L(X,Y)$ and we set $L(X) = L(X,X)$.
Let $U\subset \C$.
We call a function $\phi:U\to X$ \emph{analytic} if it admits an analytic extension to some open neighbourhood of $U$.
For the argument of a complex number 
we choose the range $-\pi<\arg z\leq\pi$ for $z\neq0$ and set $\arg 0=0$.

\section{Unbounded projections} 
\label{sec:proj}

Our main tool for investigating the dichotomy of an operator $S$
are projections corresponding to invariant subspaces.
In the general case, these projections will be unbounded and
the direct sum of the corresponding subspaces is not the whole space $X$.
Unbounded projections associated with  bisectorial operators
have been studied in \cite{arendt-zamboni}.

\begin{definition}
  Let $X$ be a Banach space.
  A (possibly unbounded)
  operator $P(X\to X)$ is called a \emph{projection} if
  $\range(P)\subset \mD(P)$ and $P^2=P$.
\end{definition}

In other words, $P$ is a projection in the algebraic sense on the 
vector space $\mD(P)$.
If $P$ is a projection, then 
\begin{equation}  \label{eq:unbproj}
  \mD(P)=\range(P)\oplus\ker(P).
\end{equation}
The \emph{complementary projection} is given by $Q=I-P$, $\mD(Q)=\mD(P)$.
In this case $\range(Q)=\ker(P)$, $\ker(Q)=\range(P)$.

Conversely, if $X_1,X_2\subset X$ are linear subspaces
such that $X_1\cap X_2=\{0\}$,
then there are corresponding complementary projections $P_1$, $P_2$ with
$\mD(P_1)=\mD(P_2)=X_1\oplus X_2$, $\range(P_1)=X_1$ and $\range(P_2)=X_2$.

\begin{remark}
  \begin{enumerate}
  \item A projection $P$ is closed if and only if
    $\range(P)$ and $\ker(P)$ are closed subspaces. 
  \item A closed projection $P$ is bounded if and only if
    $\range(P)\oplus\ker(P)$ is closed.
  \end{enumerate}
\end{remark}

The next lemma gives a sufficient condition on a linear operator $S$ that allows for the construction of a pair of closed complementary projections which commute with $S$ and yield $S$-invariant subspaces.
\begin{lemma}\label{lem:Apm}
  Let $S(X\to X)$ be a closed operator with $0\in\varrho(S)$.
  Suppose that there are bounded  operators $A_1,A_2\in L(X)$ satisfying
  \begin{gather}
    A_1+A_2=S^{-2},\label{eq:Apm-sum}\\
    A_1A_2=A_2A_1=0,\label{eq:Apm-cmpl}\\
    A_jS^{-1}=S^{-1}A_j, \quad
    j=1,2.\label{eq:Apm-comm1}
  \end{gather}
  Then:
  \begin{enumerate}
  \item The operators $P_j:=S^2A_j$, $j=1,2$,
    are closed complementary projections,
    $\mD(S^2)\subset \mD(P_1)=\mD(P_2)$ and
    $P_j x=A_j S^2x$ for $x\in\mD(S^2)$.
  \item
    The subspaces $X_j=\range(P_j)$ are closed,
    $S$- and $(S-\lambda)^{-1}$-invariant for $\lambda\in\varrho(S)$
    and satisfy
    $X_1=\ker(A_2)$, $X_2=\ker(A_1)$. Moreover
    \begin{equation}\label{eq:Apm-spec}
      \sigma(S)=\sigma(S|X_1)\cup\sigma(S|X_2).
    \end{equation}
  \end{enumerate}
\end{lemma}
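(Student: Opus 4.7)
The plan is to verify the projection properties of $P_j$ directly from the algebraic relations \eqref{eq:Apm-sum}--\eqref{eq:Apm-comm1}, then to read off the structural consequences for the ranges, and finally to construct the resolvent of $S$ from the resolvents of the restrictions $S|X_j$ in order to establish the spectral equality.

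For part (i), the key consequence of \eqref{eq:Apm-comm1} is that $A_jS^{-k}=S^{-k}A_j$ for every $k\ge1$, so $A_j$ preserves $\mD(S^k)$ and commutes with $S^k$ there; combining \eqref{eq:Apm-sum} with \eqref{eq:Apm-cmpl} gives the identity $A_j^2=A_jS^{-2}=S^{-2}A_j$. From these one deduces $\mD(S^2)\subset\mD(P_j)$ with $P_jx=A_jS^2x$ on $\mD(S^2)$, and $\mD(P_1)=\mD(P_2)$ because $A_1y+A_2y=S^{-2}y\in\mD(S^2)$ automatically. To check $P_j^2=P_j$, for $y\in\mD(P_j)$ one computes $A_jP_jy=A_jS^2A_jy=S^2A_j^2y=A_jy\in\mD(S^2)$, whence $P_jy\in\mD(P_j)$ and $P_j^2y=P_jy$. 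Summing yields $P_1+P_2=I$ on $\mD(P_j)$, and closedness of $P_j=S^2A_j$ is immediate from $A_j\in L(X)$ and the closedness of $S^2$.

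For the structural part of (ii), $P_1+P_2=I$ on $\mD(P_j)$ together with injectivity of $S^2$ gives $X_1=\range(P_1)=\ker(P_2)=\ker(A_2)$, and closedness of $X_j$ follows from $A_{3-j}\in L(X)$. Since $A_{3-j}$ commutes with $S$ on $\mD(S)$ and therefore with every $(S-\lambda)^{-1}$ for $\lambda\in\varrho(S)$, the kernel $X_j=\ker(A_{3-j})$ is both $S$- and resolvent-invariant. The inclusion $\sigma(S|X_j)\subset\sigma(S)$ is then immediate: $(S-\lambda)^{-1}|X_j\in L(X_j)$ is a bounded inverse of $S|X_j-\lambda$.

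The substantive part is the reverse inclusion. For $\lambda\in\varrho(S|X_1)\cap\varrho(S|X_2)$, set $R_j(\lambda)=(S|X_j-\lambda)^{-1}$ and define $B(\lambda)=R_1(\lambda)A_1+R_2(\lambda)A_2\in L(X)$; since $A_jX\subset X_j$ and $(S|X_j-\lambda)R_j(\lambda)=I_{X_j}$, one obtains $(S-\lambda)B(\lambda)=A_1+A_2=S^{-2}$, or equivalently
\[SB(\lambda)=S^{-2}+\lambda B(\lambda).\]
The right-hand side is bounded, so $B(\lambda)$ maps into $\mD(S)$; iterating once more gives $S^2B(\lambda)=S^{-1}+\lambda S^{-2}+\lambda^2B(\lambda)\in L(X)$, and a direct telescoping then yields $(S-\lambda)S^2B(\lambda)=I$ on $X$, while $B(\lambda)(S-\lambda)=S^{-2}$ on $\mD(S)$ gives $S^2B(\lambda)(S-\lambda)=I$ there. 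Hence $\lambda\in\varrho(S)$ with $(S-\lambda)^{-1}=S^2B(\lambda)$. The main obstacle is exactly this last step: since $\mD(P_j)=X_1\oplus X_2$ need not equal $X$, a naive diagonal definition of the resolvent on $X_1\oplus X_2$ does not suffice, and it is the bootstrap $SB(\lambda)=S^{-2}+\lambda B(\lambda)$ that circumvents the issue by promoting $B(\lambda)$ to an operator whose iterate $S^2B(\lambda)$ is bounded on all of $X$ without requiring any decomposition of $y$.
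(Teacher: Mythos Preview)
Your argument is correct and follows essentially the same route as the paper: define $P_j=S^2A_j$, use the commutation relations to verify the projection properties, identify $X_j=\ker(A_{3-j})$, and for the spectral equality build the same operator $B(\lambda)=(S|X_1-\lambda)^{-1}A_1+(S|X_2-\lambda)^{-1}A_2$ (the paper calls it $T$) and use $(S-\lambda)S^2B(\lambda)=I$ for surjectivity.

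There are two small organisational differences worth noting. First, you obtain $P_j^2=P_j$ from the identity $A_j^2=S^{-2}A_j$, whereas the paper argues via $A_2P_1=0$ and then $P_1^2=(I-P_2)P_1$; both are immediate consequences of \eqref{eq:Apm-sum}--\eqref{eq:Apm-cmpl}. Second, for injectivity of $S-\lambda$ the paper decomposes a kernel element $x$ as $P_1x+P_2x$ (using $x\in\mD(S^3)$) and applies invertibility of $S|X_j-\lambda$ to each piece; you instead verify the left-inverse identity $B(\lambda)(S-\lambda)=S^{-2}$ on $\mD(S)$ directly from the commutation $A_j(S-\lambda)=(S-\lambda)A_j$ on $\mD(S)$. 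Your version is a little more streamlined since it handles injectivity and surjectivity symmetrically via the two-sided inverse $S^2B(\lambda)$, and your explicit bootstrap $SB(\lambda)=S^{-2}+\lambda B(\lambda)$, $S^2B(\lambda)=S^{-1}+\lambda S^{-2}+\lambda^2 B(\lambda)$ spells out the claim $\range(T)\subset\mD(S^3)$ that the paper merely asserts.
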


\begin{proof}
  (i)
  Since $S^2$ is closed and $A_j$ is bounded, $P_j$ is closed.
  Note that $x\in\mD(P_j)$ if and only if $A_j x\in\mD(S^2)$. 
  Hence \eqref{eq:Apm-sum} implies that $\mD(P_1)=\mD(P_2)$ and that
  $P_1+P_2=I$ on $\mD(P_1)$.
  From \eqref{eq:Apm-comm1} 
  it follows that
  \begin{equation}\label{eq:Apm-comm2}
    A_j Sx=SA_j x \quad\text{for}\quad x\in\mD(S),\,j=1,2.
  \end{equation}
  For $x\in\mD(P_1)$, this implies $A_2P_1x=A_2S^2A_1x=S^2A_2A_1x=0$
  and hence $\range(P_1)\subset\mD(P_j)$, $P_2 P_1=0$ and
  $P_1^2=(I-P_2)P_1=P_1$.
  Thus
  $P_1$ and $P_2$ are complementary projections.
  Additionally, \eqref{eq:Apm-comm2} yields that if $x\in\mD(S^2)$, then
  $x\in\mD(P_1)$ and
  $A_j S^2x=S^2A_j x=P_j x$.

  (ii)
  Since $S^2$ is invertible, $X_1=\ker(P_2)=\ker(A_2)$ and analogously
  $X_2=\ker(A_1)$.
  Consequently these subspaces are closed.
  By a straightforward calculation \eqref{eq:Apm-comm2} yields
  \begin{equation}\label{eq:Apm-comm3}
    A_j(S-\lambda)^{-1}=(S-\lambda)^{-1}A_j \quad\text{for all}\quad
    \lambda\in\varrho(S), j=1,2.
  \end{equation}
  This and \eqref{eq:Apm-comm2} imply that
  the spaces $X_j$ are $S$- and $(S-\lambda)^{-1}$-invariant.
  In particular it follows that $\mD(S)\cap X_j = S^{-1}(X_j)$.
  To obtain \eqref{eq:Apm-spec}, we show now the equivalent identity
  \[\varrho(S)=\varrho(S|{X_1})\cap\varrho(S|{X_2}).\]
  From the invariance properties of $X_j$ it is easily seen that
  for $\lambda\in\varrho(S)$ the operator
  $S|X_j-\lambda$ is bijective with inverse
  $(S-\lambda)^{-1}|X_j$; hence $\lambda\in\varrho(S|X_j)$.
  For the other inclusion
  let $\lambda\in\varrho(S|{X_1})\cap\varrho(S|{X_2})$.
  If $(S-\lambda) x=0$, then
  $x\in\mD(S^3)$,  $A_jS^3x=S^3A_jx$ and hence
  $x\in\mD(P_j)$, $P_jx\in\mD(S)$.
  Consequently
  \[0=(S-\lambda)x
  =(S|{X_1}-\lambda)P_1x+(S|{X_2}-\lambda)P_2x.\]
  This yields $P_1 x=P_2x=0$, i.e.\ $x=0$; hence $S-\lambda$ is injective.
  To show surjectivity, set
  \[T=(S|{X_1}-\lambda)^{-1}A_1+(S|{X_2}-\lambda)^{-1}A_2.\]
  Then $(S-\lambda)T=A_1+A_2=S^{-2}$ from which we conclude that
  $\range(T)\subset\mD(S^3)$ and $(S-\lambda)S^2T=I$.
\end{proof}

The following example illustrates a typical situation in which
the projections from Lemma~\ref{lem:Apm} are unbounded.
It also shows that for fixed $S$ there may be several possible
choices for $A_1,A_2$.

\begin{example}\label{ex:unbproj}
  Let $S$ be the block diagonal operator on the sequence space $X=l^2(\C)$
  given by
  \[S=\diag(S_1,S_2,\dots), 
  \quad
  S_n=\pmat{n&2n^2\\0&-n}.\]
  First observe that
  \[(S_n-\lambda)^{-1}=
  \pmat{(n-\lambda)^{-1}&2n^2(n-\lambda)^{-1}(n+\lambda)^{-1}\\
    0&-(n+\lambda)^{-1}},\qquad \lambda\neq\pm n.\] 
  Then $\sup_n\|(S_n-\lambda)^{-1}\|<\infty$
  for $\lambda\not\in\Z\setminus\{0\}$. 
  Hence $\sigma(S)=\Z\setminus\{0\}$ and
  $(S-\lambda)^{-1}=\diag((S_1-\lambda)^{-1},(S_2-\lambda)^{-1},\dots)$,
  $\lambda\in\varrho(S)$.
  The spectral projections $P_{n}^{+}$, $P_{n}^{-}$ corresponding to the 
  eigenvalues $n$ and  $-n$ of $S_n$ are
  \[P_{n}^{+}=\pmat{1&n\\0&0},\quad P_{n}^{-}=\pmat{0&-n\\0&1}.\]
  Moreover 
  \[S_n^{-1}=\pmat{n^{-1}&2\\0&-n^{-1}},\quad
  S_n^{-2}=\pmat{n^{-2}&0\\0&n^{-2}}.\]
  Let $A_{n}^{\pm}:=S_n^{-2}P_{n}^{\pm}$. Then
  \[A_{n}^{+}=\pmat{n^{-2}&n^{-1}\\0&0},\quad
  A_{n}^{-}=\pmat{0&-n^{-1}\\0&n^{-2}}\]
  and we have 
  $A_{n}^{+}+A_{n}^{-}=S_n^{-2}$, $A_{n}^{+}A_{n}^{-}=A_{n}^{-}A_{n}^{+}=0$ and
  $A_{n}^{\pm}(S_n-\lambda)^{-1}=(S_n-\lambda)^{-1}A_{n}^{\pm}$
  for $\lambda\not\in\Z\setminus\{0\}$.

  Now we define an operator $A_1\in L(X)$ by choosing for each $n$ either
  $A_{n}^{+}$ or $A_{n}^{-}$; 
  for $A_2\in L(X)$ we take the complementary choice.
  More explicitly
  let $\Lambda_1\subset\N$ and $\Lambda_2 = \N \setminus \Lambda_1$.
  Set $\epsilon^j_n=+$\, if $n\in\Lambda_j$ and
  $\epsilon^j_n=-$\, if $n\not\in\Lambda_j$.
  Then the operators
  \begin{equation}\label{eq:Apm-example}
     A_j=\diag( A_{1}^{\epsilon^j_1}, A_{2}^{\epsilon^j_2}, A_{3}^{\epsilon^j_3},\dots ),
      \qquad j=1,2,
  \end{equation}
  satisfy all conditions in Lemma~\ref{lem:Apm}.
  The closed projections $P_j=S^2A_j$ are unbounded and block diagonal with 
  $P_j=\diag( P_{1}^{\epsilon^j_1}, P_{2}^{\epsilon^j_2}, P_{3}^{\epsilon^j_3},\dots )$.
  By Lemma~\ref{lem:Apm}, the subspaces $X_j=\range(P_j)$ are $S$-invariant.
  Clearly $X_j$ is the closed linear hull of the eigenvectors of $S$ for the eigenvalues $n\in\Lambda_j$ and the eigenvalues $-n$ for $n\not\in\Lambda_j$.
  Note that $X_1\oplus X_2 \neq X$.
  We will investigate this example further in Example~\ref{ex:unbsplit}.
\end{example}

\begin{remark}\label{rem:Bpm}
  Lemma~\ref{lem:Apm} continues to hold if $S^2$ and $S^{-2}$
  are replaced  throughout by $S$ and $S^{-1}$.
  That is, if there exist $B_1,B_2\in L(X)$ such that
  \begin{equation*}
    B_1+B_2=S^{-1}, \quad B_1B_2=B_2B_1=0,\quad
    B_jS^{-1}=S^{-1}B_j\quad (j=1,2),
  \end{equation*}
  then we obtain  closed 
  complementary projections $P_j=SB_j$ 
  where, in particular, $\mD(S)\subset\mD(P_j)$, $X_j=\range(P_j)$ are
  $S$- and $(S-\lambda)^{-1}$-invariant and \eqref{eq:Apm-spec} holds.
\end{remark}

\begin{remark}
  Finally, we can put Lemma~\ref{lem:Apm} in a bounded operator setting:
  Let $T\in L(X)$ be an injective operator
  whose inverse $T^{-1}:\range(T)\to X$ is possibly unbounded.
  If $B_j\in L(X)$ are such that
  \begin{equation*}
    B_1+B_2=T, \quad B_1B_2=B_2B_1=0,\quad
    B_jT=TB_j\quad (j=1,2),
  \end{equation*}
  then $P_j=T^{-1}B_j$ are closed complementary projections
  with $X_1 := \range(P_1) = \ker(B_2)$ and 
  $X_2 := \range(P_2) = \ker(B_1)$.
  The subspaces $X_j$ are $T$- and $(T-\lambda)^{-1}$-invariant and
  \begin{align}
    &\sigma(T)\setminus\{0\}=
    \big( \sigma(T|X_1)\setminus\{0\} \big)
    \cup
    \big( \sigma(T|X_2)\setminus\{0\} \big),\label{eq:Tpm-spec1}\\
    &\sigma(T)\supset\sigma(T|X_1) \cup \sigma(T|X_2)\label{eq:Tpm-spec2}
  \end{align}
  with equality in \eqref{eq:Tpm-spec2} if $T$ has dense  range.
  This follows from the previous remark with $S=T^{-1}$
  (so $0\in\varrho(S)$).
  To obtain \eqref{eq:Tpm-spec1}, note that
  for $\lambda\neq0$,
  $\lambda\in\sigma(S)$ if and only if
  $\lambda^{-1}\in\sigma(S^{-1})$,
  whereas \eqref{eq:Tpm-spec2}
  is a consequence of the $T^{-1}$-invariance of $X_j$.
  If $\range(T)$ is dense in $X$, then
  equality in \eqref{eq:Tpm-spec2}
  follows because $0\in\varrho(T|X_1)\cap\varrho(T|X_2)$ implies
  that $T^{-1}$ is bounded on $X_1\oplus X_2$,
  which is a dense subspace of $X$ since
  $\range(T)=\mD(S)\subset\mD(P_j)=X_1\oplus X_2$.
\end{remark}

\section{Dichotomous operators} 
\label{sec:dichot}

\begin{samepage}
\begin{definition}
   \label{def:dich}
   Let $X$ be a Banach space and let $X_\pm$ be closed subspaces with $X=X_+\oplus X_-$.
   An operator $S(X\to X)$ is called \emph{dichotomous} with respect to the decomposition $X=X_+\oplus X_-$ if
   \begin{enumerate}
      \item\label{enumi:dich:ir} $\I\R \subset \varrho(S)$,
      \item\label{enumi:dich:inv} $X_+$ and $X_-$ are $S$-invariant,
      \item 
      \label{enumi:dich:spec}
      $\sigma(S|X_+) \subset \C_+$, $\sigma(S|X_-) \subset \C_-$.
   \end{enumerate}

   An operator $S$ is called \emph{strictly dichotomous} with respect to
   $X=X_+\oplus X_-$ if, in addition,
   \begin{enumerate}
      \setcounter{enumi}{3}
      \item\label{enumi:dich:bnd}
	$\|(S|X_+-\lambda)^{-1}\|$ is bounded on $\overline{\C_-}$,\,
	$\|(S|X_--\lambda)^{-1}\|$ is bounded on $\overline{\C_+}$.
   \end{enumerate}
   A dichotomous operator
   $S$ is called \emph{exponentially dichotomous} if
   $-S|X_+$ and $S|X_-$ generate exponentially decaying
   semigroups.
\end{definition}
\end{samepage}

Clearly, exponential dichotomy implies strict dichotomy.
Note that all dichotomous operators appearing in \cite{LT2001,LRR2002,tretter-wyss} are in fact strictly dichotomous, see Remark~\ref{rem:theo-splitting}.

\begin{remark}
   For a given operator $S$, there may exist several decompositions of $X$ with respect to which it is dichotomous (Example~\ref{ex:nonunique}), but there exists at most one with respect to which it is strictly dichotomous (Lemma~\ref{lem:dichunique}).
\end{remark}

\begin{example}\label{ex:nonunique}
  Let $X$ be a Banach space and let $S(X\to X)$ be any operator satisfying $\sigma(S)=\varnothing$.
  Then evidently $S$ is dichotomous with respect to the two decompositions
  \begin{flalign}
     \label{eq:nonunique1}
     &\hspace*{20ex}& X_+ &= X,  & X_- &=\{0\} &\hspace*{20ex}&\\
     &\nonumber\text{and}\hidewidth\\
     \label{eq:nonunique2}
     &\hspace*{20ex}& X_+ &= \{0\},  & X_- &=X. &\hspace*{20ex}&
  \end{flalign}
  Examples for such operators are generators of nilpotent
  contraction semigroups, e.g.\ shift semigroups on bounded intervals.
  In this case, the resolvent $(S-\lambda)^{-1}$ is uniformly
  bounded on $\C_+$ and thus
  $S$ is strictly dichotomous with respect to the
  decomposition \eqref{eq:nonunique2} but not with respect to \eqref{eq:nonunique1}.

  To obtain an example of a dichotomous operator with non-empty spectrum, we consider
  an operator given as the direct sum $S=S_0\oplus S_1\oplus S_2$ on
  $X=X_0\oplus X_1\oplus X_2$ where the $S_j$ are linear operators on $X_j$
  such that  
  \[\sigma(S_0)=\varnothing,\quad
  \sigma(S_1)\subset \set{\lambda\in\C}{\re\lambda\geq h},\quad
  \sigma(S_2)\subset \set{\lambda\in\C}{\re\lambda\leq -h}\]
  for some $h>0$.
  Then $S$ is dichotomous with respect to the decompositions
  \begin{flalign}
     &\hspace*{15ex}& X_+ &= X_1\oplus X_0,  & X_- &= X_2 &\hspace*{15ex}&\\
     &\nonumber\text{and}\hidewidth\\
     &\hspace*{15ex}& X_+ &= X_1,  & X_- &=X_2\oplus X_0. &\hspace*{15ex}&
  \end{flalign}

\end{example}

\begin{remark}
   Even in the case of bisectorial operators, which will be introduced in Section~\ref{sec:bisect}, dichotomy is not sufficient to determine the subspaces $X_\pm$ uniquely, see Section~\ref{subsec:nonunique2}.
\end{remark}

\begin{samepage}
\begin{remark}
   \label{rem:genvec}
   Let $S$ be a dichotomous operator with respect to $X= X_+ \oplus X_-$.
   \begin{enumerate}
      \item If $x$ is a (generalised) eigenvector of $S$ with eigenvalue $\lambda\in \C_\pm$, then $x\in X_\pm$.

      \item Suppose that $S$ has a complete system of generalised eigenvectors.
      Then the spaces $X_\pm$ are uniquely determined by $S$ as
      the closures of the span of generalised eigenvectors of $S$ whose
      eigenvalues belong to $\C_\pm$.
   \end{enumerate}
\end{remark}
\end{samepage}

\begin{lemma}\label{lem:blockop}
  \begin{enumerate}
     \item\label{item:blockop:i}
  If $S$ is dichotomous with respect to $X=X_+\oplus X_-$, then
  $\mD(S)=(\mD(S)\cap X_+)\oplus(\mD(S)\cap X_-)$,
  and $S$ admits the block matrix representation
  \[S=\begin{pmatrix}S|X_+&0\\0&S|X_-\end{pmatrix}\]
  with respect to $X=X_+\oplus X_-$.
  In particular, $\sigma(S)=\sigma(S|X_+)\cup\sigma(S|X_-)$ and
  the subspaces $X_\pm$ are $(S-\lambda)^{-1}$-invariant for $\lambda\in\varrho(S)$.

  \item\label{item:blockop:ii}
  If $S$ is  strictly dichotomous, then there exist $h>0$ and $M>0$ such that
  $\set{\lambda\in\C}{|\re\lambda|\leq h}\subset\varrho(S)$ and
  \begin{equation}\label{eq:resolvbnd-h0}
     M=\sup_{|\re\lambda|\leq h}\|(S-\lambda)^{-1}\|<\infty.
  \end{equation}
  \end{enumerate}
\end{lemma}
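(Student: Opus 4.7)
For part (i), the plan is to use that the closed direct sum $X=X_+\oplus X_-$ yields bounded complementary projections $P_\pm$, and then to transfer this decomposition to $\mD(S)$ via the bounded inverse $S^{-1}$. Since $\sigma(S|X_\pm)\subset\C_\pm$ by (iii), one has $0\in\varrho(S|X_+)\cap\varrho(S|X_-)$, so $(S|X_\pm)^{-1}\in L(X_\pm)$ exists. For $y\in X_\pm$ the element $(S|X_\pm)^{-1}y\in\mD(S)\cap X_\pm$ is a preimage of $y$ under $S$; by injectivity of $S$ on $\mD(S)$ (from $0\in\varrho(S)$) it must coincide with $S^{-1}y$. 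Hence $S^{-1}X_\pm\subset X_\pm$, and therefore $S^{-1}$ commutes with $P_\pm$. For $x\in\mD(S)$ I would then write $P_\pm x=P_\pm S^{-1}(Sx)=S^{-1}P_\pm Sx\in\mD(S)\cap X_\pm$, which gives $\mD(S)=(\mD(S)\cap X_+)\oplus(\mD(S)\cap X_-)$; the block-diagonal form of $S$ follows immediately from the $S$-invariance of $X_\pm$.

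The spectral identity $\sigma(S)=\sigma(S|X_+)\cup\sigma(S|X_-)$ is then the standard two-sided argument. For $\lambda\in\varrho(S|X_+)\cap\varrho(S|X_-)$ the bounded operator $(S|X_+-\lambda)^{-1}P_++(S|X_--\lambda)^{-1}P_-$ is readily verified to coincide with $(S-\lambda)^{-1}$. Conversely, for $\lambda\in\varrho(S)$ and $y\in X_\pm$, writing $z=(S-\lambda)^{-1}y=z_++z_-$ with $z_\pm\in\mD(S)\cap X_\pm$ gives $(S-\lambda)z_++(S-\lambda)z_-=y$ with summands in $X_+$ and $X_-$, so $(S-\lambda)z_\mp=0$ and hence $z_\mp=0$ by injectivity. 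This simultaneously proves $\lambda\in\varrho(S|X_\pm)$ and the $(S-\lambda)^{-1}$-invariance of $X_\pm$.

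For part (ii) the plan is a pure Neumann-series argument. From (iv) I get constants $M_\pm$ with $\|(S|X_\pm-\lambda)^{-1}\|\leq M_\pm$ on $\I\R$. Expanding the resolvent about an arbitrary $\lambda_0\in\I\R$ shows that the open disc of radius $1/M_\pm$ about $\lambda_0$ lies in $\varrho(S|X_\pm)$, with $\|(S|X_\pm-\lambda)^{-1}\|\leq 2M_\pm$ on the concentric disc of half the radius. Setting $h:=\tfrac12\min(1/M_+,1/M_-)$, the strip $|\re\lambda|\leq h$ lies in $\varrho(S|X_+)\cap\varrho(S|X_-)=\varrho(S)$, and the block representation from (i) yields the uniform bound $\|(S-\lambda)^{-1}\|\leq 2M_+\|P_+\|+2M_-\|P_-\|$ on this strip, establishing \eqref{eq:resolvbnd-h0}.

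The only point that seems to require care is the claim in the first paragraph that $S^{-1}$ preserves $X_\pm$: one must not invoke any functional-calculus-style commutation here, because $P_\pm$ arise purely from the abstract direct-sum structure. The correct tool is the bounded invertibility of the restriction $S|X_\pm$ granted by hypothesis (iii); this is in fact the only nontrivial use of the dichotomy condition beyond $\I\R\subset\varrho(S)$ in the whole proof.
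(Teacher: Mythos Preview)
Your proposal is correct and follows essentially the same route as the paper. The paper defers part~(i) to \cite[Lemma~2.4]{tretter-wyss} and for part~(ii) merely invokes ``a Neumann series argument'' together with the block decomposition; your write-up is a faithful and complete expansion of exactly this strategy, including the key observation that $0\in\varrho(S|X_\pm)$ forces $S^{-1}X_\pm\subset X_\pm$ and hence $P_\pm$ commutes with $S^{-1}$.
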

\begin{proof}
   \enumiref{item:blockop:i}
   is proved as in \cite[Lemma~2.4]{tretter-wyss}.
   If $S$ is strictly dichotomous, then a Neumann series argument implies that there exist
   $h>0$ and $M>0$ such that $\lambda\in\varrho(S|X_\pm)$ 
   and $\|(S|X_\pm-\lambda)^{-1}\|\le M$ whenever $|\re\lambda|\leq h$.
   \enumiref{item:blockop:ii}
   then follows from the block matrix decomposition in \enumiref{item:blockop:i}.
\end{proof}

Now we establish the uniqueness of the decomposition $X=X_+\oplus X_-$ of a strictly dichotomous operator.
To this end, let $S(X\to X)$ with $\I\R\subset\varrho(S)$.
We consider the two subspaces $G_+$ and $G_-$ defined by
\begin{equation}\label{eq:G0}
  G_\pm=\set{x\in X}{(S-\lambda)^{-1}x\text{ has a bounded analytic 
    extension to }\overline{\C_\mp}}.
\end{equation}
More explicitly, for $x\in X$ we consider the analytic function
\begin{align*}
   \phi_x:\varrho(S)\to X,
   \qquad \phi_x(\lambda) = (S-\lambda)^{-1}x.
\end{align*}
Then $x\in G_+$ if and only if
$\phi_x$ admits a bounded analytic extension to the closed left half-plane
\begin{align*}
   \phi_x^+:\overline{\C_-}\cup\varrho(S)\to X.
\end{align*}
Analogously, $x\in G_-$ if and only if
$\phi_x$ admits a bounded analytic extension to the closed right half-plane
\begin{align*}
   \phi_x^-:\overline{\C_+}\cup\varrho(S)\to X.
\end{align*}

\begin{lemma}\label{lem:dichunique}
   Let $S(X\to X)$ with $\I\R\subset \varrho(S)$ and let $G_+$ and $G_-$ be given by \eqref{eq:G0}.
  Then:
  \begin{enumerate}
  \item\label{enumi:Gintersection}
  $G_+\cap G_-=\{0\}$.
  \item\label{enumi:dichunique}
  If $S$ is strictly dichotomous for the decomposition $X=X_+\oplus X_-$, then $X_\pm=G_\pm$.
  In particular, the subspaces $X_\pm$ are uniquely determined.
  \end{enumerate}
\end{lemma}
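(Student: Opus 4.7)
For part \enumiref{enumi:Gintersection}, the plan is to glue the two bounded analytic extensions into a single entire function and apply a Liouville-type argument. Let $x\in G_+\cap G_-$. Then $\phi_x^+$ and $\phi_x^-$ both extend the analytic function $\phi_x$ on $\varrho(S)$, which contains the imaginary axis. On the overlap $\varrho(S)\supset\I\R$ they coincide, hence by the identity theorem they define a single analytic function $f:\C\to X$. Since $\phi_x^+$ is bounded on $\overline{\C_-}$ and $\phi_x^-$ is bounded on $\overline{\C_+}$, the entire function $f$ is bounded on $\C$. Applying the classical Liouville theorem to the scalar entire function $\ell\circ f$ for each $\ell\in X^*$ and invoking Hahn-Banach, $f$ is constant, say $f\equiv c$. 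Then $c=(S-\lambda)^{-1}x$ for every $\lambda\in\varrho(S)$, i.e.\ $x=(S-\lambda)c$ for all such $\lambda$. Taking two distinct values $\lambda_1,\lambda_2\in\I\R$ and subtracting yields $(\lambda_2-\lambda_1)c=0$, hence $c=0$ and therefore $x=0$.

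For part \enumiref{enumi:dichunique}, I would first show the two inclusions $X_\pm\subset G_\pm$ using the block matrix structure from Lemma~\ref{lem:blockop}\enumiref{item:blockop:i}. For $x\in X_+$, the $(S-\lambda)^{-1}$-invariance of $X_+$ gives $(S-\lambda)^{-1}x=(S|X_+-\lambda)^{-1}x$ on $\varrho(S)$. Since $\sigma(S|X_+)\subset\C_+$ and strict dichotomy (condition \enumiref{enumi:dich:bnd}) supplies a uniform bound of $(S|X_+-\lambda)^{-1}$ on $\overline{\C_-}$, the function $\phi_x$ admits a bounded analytic extension to $\overline{\C_-}$, so $x\in G_+$. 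The argument for $X_-\subset G_-$ is symmetric.

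To prove the reverse inclusions, I use that $G_+$ and $G_-$ are linear subspaces (sums and scalar multiples of bounded analytic extensions are again bounded analytic extensions). Given $x\in G_+$, decompose $x=x_++x_-$ along $X=X_+\oplus X_-$. From $X_+\subset G_+$ we have $x_+\in G_+$, and hence $x_-=x-x_+\in G_+$ by linearity. On the other hand $x_-\in X_-\subset G_-$, so $x_-\in G_+\cap G_-$. Part \enumiref{enumi:Gintersection} forces $x_-=0$, i.e.\ $x=x_+\in X_+$. This yields $G_+\subset X_+$; the inclusion $G_-\subset X_-$ is analogous.

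The main subtlety I anticipate is making the entire-function argument in part \enumiref{enumi:Gintersection} watertight: one must verify that the two extensions truly agree on their common domain (handled via the identity theorem on the connected set $\varrho(S)$), that Liouville is correctly applied in the vector-valued setting (handled by reduction to scalars via $X^*$), and that constancy can be upgraded to vanishing (handled by the simple resolvent computation above). Part \enumiref{enumi:dichunique} is then essentially a linear algebra step built on top of \enumiref{enumi:Gintersection}.
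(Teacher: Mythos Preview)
Your proof is correct and follows essentially the same approach as the paper's. The paper's argument is terser but identical in substance: Liouville applied to the glued entire function for part~\enumiref{enumi:Gintersection}, and for part~\enumiref{enumi:dichunique} the inclusions $X_\pm\subset G_\pm$ followed by the linear-algebra observation that $X=X_+\oplus X_-$ together with $G_+\cap G_-=\{0\}$ forces $X_\pm=G_\pm$ (which is precisely your decomposition argument written out).
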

\begin{proof}
  \enumiref{enumi:Gintersection}:
  Let $x\in G_+\cap G_-$. Then $\phi_x(\lambda)=(S-\lambda)^{-1}x$
  can be extended to a bounded analytic function on $\C$
  which must be constant by Liouville's theorem.
  Hence $(S-\lambda_1)^{-1}x = (S-\lambda_2)^{-1}x$ for all $\lambda_1,\lambda_2\in\varrho(S)$ which is only the case if $\varrho(S)=\varnothing$ or $x=0$.

  \enumiref{enumi:dichunique}: For $x\in X_+$, strict dichotomy of $S$ implies that $\phi_x^+(\lambda)=(S|X_+-\lambda)^{-1}x$
  is a bounded analytic extension of $(S-\lambda)^{-1}x$ to $\overline{\C_-}$.
  Hence $x\in G_+$, i.e.\ $X_+\subset G_+$;
  similarly $X_-\subset G_-$.
  From $X=X_+\oplus X_-$ and $G_+\cap G_-=\{0\}$ we thus obtain
  $X_\pm=G_\pm$.
\end{proof}

\begin{remark}\label{rem:Gpm}
   The subspaces $G_\pm$ have been introduced in \cite{BGK1986},
   but with the roles of $G_+$ and $G_-$ interchanged and the additional conditions
   $\phi_x^\pm(\lambda)\in\mD(S)$ and
   $(S-\lambda)\phi_x^\pm(\lambda)=x$.
   In the  proof of Theorem~\ref{theo:splitting} we will show that
   if in addition to $\I\R\subset\varrho(S)$ an estimate
   \eqref{eq:resolvbnd-h0} holds,
   then $\phi_x^\pm(\lambda)=(S|G_\pm-\lambda)^{-1}x$ and hence both
   conditions are  automatically fulfilled.
\end{remark}

\begin{lemma}[Phragm\'en-Lindel\"of theorem for vector valued functions]
\label{theo:pl}%
Let $X$ be a Banach space,
  $a\geq1/2$ and 
  $\Sigma=\set{z\in\C}{|\arg z|<\frac{\pi}{2a}}$.
  Consider a continuous function $f:\overline{\Sigma}\to X$ which is
  analytic in $\Sigma\setminus\{0\}$.
  If there exist constants $M,C>0$ and $0<b<a$ such that
  $\|f(z)\|\leq M$ for $z\in\partial\Sigma$ and 
  $\|f(z)\|\leq C e^{|z|^b}$ for $z\in\Sigma$,
  then also $\|f(z)\|\leq M$ for $z\in\Sigma$.
\end{lemma}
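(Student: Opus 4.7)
The plan is to reduce to the scalar-valued Phragm\'en--Lindel\"of theorem via Hahn--Banach and then run the standard comparison argument with an auxiliary exponential factor. First, for every $\phi\in X^*$ with $\|\phi\|\le 1$ I set $g:=\phi\circ f:\overline{\Sigma}\to\C$. Then $g$ is continuous on $\overline{\Sigma}$, analytic in $\Sigma\setminus\{0\}$, bounded by $M$ on $\partial\Sigma$, and satisfies $|g(z)|\le C\e^{|z|^b}$ on $\Sigma$. Since $\|f(z)\|=\sup_{\|\phi\|\le 1}|\phi(f(z))|$ by Hahn--Banach, it suffices to prove $|g(z)|\le M$ throughout $\Sigma$ for every such $\phi$.

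Next, I would fix an intermediate exponent $c$ with $b<c<a$ and, for $\eps>0$, introduce the comparison function
\[
   h_\eps(z)=g(z)\,\e^{-\eps z^c},
\]
where $z^c$ is given by the principal branch; this is well defined and continuous on $\overline{\Sigma}$ and analytic in $\Sigma\setminus\{0\}$. For $z=r\e^{\I\theta}\in\overline{\Sigma}$ the inequality $|\theta|\le \pi/(2a)$ combined with $c<a$ gives
\[
   \re(z^c)=r^c\cos(c\theta)\ge r^c\delta,\qquad
   \delta:=\cos\tfrac{c\pi}{2a}>0,
\]
so $|h_\eps(z)|\le|g(z)|\,\e^{-\eps r^c\delta}$. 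On $\partial\Sigma$ this yields $|h_\eps|\le M$, while on $\Sigma$ we obtain $|h_\eps(z)|\le C\e^{r^b-\eps r^c\delta}\to 0$ as $r\to\infty$ uniformly in $\theta$, because $c>b$.

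Choosing $R$ so large that $C\e^{R^b-\eps R^c\delta}\le M$ and applying the scalar maximum modulus principle to $h_\eps$ on the bounded region $\Sigma\cap\{0<|z|<R\}$ (where $h_\eps$ is analytic and continuous up to the boundary, and any isolated singularity at $0$ is removable by boundedness), I would conclude $|h_\eps|\le M$ there, hence on all of $\Sigma$ by letting $R\to\infty$. Letting $\eps\to 0^+$ then gives $|g(z)|\le M$ on $\Sigma$, and the Hahn--Banach reduction finishes the proof. The only delicate point is the choice of the intermediate exponent $c$: one needs $c<a$ so that $\re(z^c)$ stays above a positive multiple of $r^c$ throughout the closed sector, and $c>b$ so that the decay of $\e^{-\eps r^c\delta}$ dominates the a priori growth $\e^{r^b}$ of $g$. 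The hypothesis $b<a$ is precisely what makes such a choice possible.
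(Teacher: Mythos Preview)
Your proof is correct and follows essentially the same approach as the paper: reduce to the scalar case via Hahn--Banach and then invoke the classical Phragm\'en--Lindel\"of principle. The only difference is that the paper simply cites the scalar result (Conway, Corollary~VI.4.2), whereas you supply its standard proof via the auxiliary factor $\e^{-\eps z^c}$ with $b<c<a$ and the maximum modulus principle on truncated sectors.
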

\begin{proof}
   Let $x'\in X'$ with $\|x'\| = 1$.
   Then $x'\circ f$ is continuous in $\overline\Sigma$, analytic in $\Sigma$,
   $|(x'\circ f)(z)| \le \|f(z)\| \le M$ for $z\in\partial \Sigma$
   and $|(x'\circ f)(z)| \le Ce^{|z|^b}$ for $z\in\Sigma$.
   Hence, by the Phragm\'en-Lindel\"of theorem for complex valued functions, see e.g. \cite[Corollary~VI.4.2]{conway}, we obtain
   $|(x'\circ f)(z)| \le M$ for all $z\in\Sigma$.
   Since this is true for all $x'\in X'$ with $\|x'\|=1$, a corollary of the Hahn-Banach theorem implies that $\|f(z)\| \le M$ for all $z\in\Sigma$.
\end{proof}

The Phragm\'en-Lindel\"of theorem shows that
the condition on the boundedness of the extensions $\phi_x^\pm$ 
in the definition of $G_\pm$ can be weakened
if the operator $S$ satisfies an estimate \eqref{eq:resolvbnd-h0}.
\begin{proposition}
   Let $S(X\to X)$ and $h,M>0$ be such that
   $\set{\lambda\in\C}{|\re\lambda|\leq h}\subset\varrho(S)$ and
   \eqref{eq:resolvbnd-h0} holds.
   If $x\in X$ is such that $\phi_x(\lambda)=(S-\lambda)^{-1}x$ admits 
   an analytic extension $\phi_x^+$ to the left half-plane
   satisfying
   \[\|\phi_x^+(\lambda)\|\leq K|\lambda|^k,\quad \re\lambda<-h,\]
   with some constants $k\in\N$ and $K>0$, then 
   $\|\phi_x^+(\lambda)\|\le M$ for $\re\lambda\leq h$; in particular
   $x\in G_+$.
   Similarly if there is an analytic extension $\phi_x^-$ to the 
   right half-plane such that
   \[\|\phi_x^-(\lambda)\|\leq K|\lambda|^k,\quad \re\lambda>h,\]
   then 
   $\|\phi_x^-(\lambda)\|\le M$ for $\re\lambda\geq -h$;
   in particular $x\in G_-$.
\end{proposition}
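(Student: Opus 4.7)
The plan is to reduce the claim to the Phragm\'en--Lindel\"of theorem (Lemma~\ref{theo:pl}) applied to an analytic extension of $\phi_x^+$ to the slightly larger half-plane $\{\re\lambda\le h\}$. First I would observe that such an extension is automatic: since $\{|\re\lambda|\le h\}\subset\varrho(S)$, the resolvent $(S-\lambda)^{-1}x$ is defined and bounded by $M\|x\|$ on the entire strip, and by the identity theorem it must agree with $\phi_x^+$ on the overlap $\{-h\le\re\lambda\le 0\}\cap\varrho(S)$. Pasting the two functions together yields a single analytic $\tilde\phi$ on $\{\re\lambda\le h\}$ which coincides with the resolvent on $\{0\le\re\lambda\le h\}$.

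Next I would recast the problem to match the sector setting of Lemma~\ref{theo:pl} via the substitution $g(z)=\tilde\phi(h-z)$, which is continuous on $\overline{\C_+}$ and analytic in $\C_+$. Here $\Sigma=\C_+=\{|\arg z|<\pi/2\}$, i.e.\ $a=1$. The boundary bound is immediate: for $z=\I t\in\partial\Sigma$ the point $h-z$ lies on the line $\re\lambda=h$, so \eqref{eq:resolvbnd-h0} gives $\|g(z)\|=\|(S-(h-z))^{-1}x\|\le M\|x\|$. For the growth bound inside $\Sigma$ I would split into $0\le\re z\le 2h$, where $h-z$ lies in the strip so $\|g(z)\|\le M\|x\|$, and $\re z>2h$, where $\re(h-z)<-h$ and the hypothesis gives $\|g(z)\|\le K|h-z|^k$. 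Polynomial growth is trivially dominated by $C\e^{|z|^b}$ for any $0<b<1=a$, so the growth hypothesis of Phragm\'en--Lindel\"of is met.

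Lemma~\ref{theo:pl} then yields $\|g(z)\|\le M\|x\|$ throughout $\overline{\C_+}$, which translates back into the claimed bound on $\tilde\phi$, and hence on $\phi_x^+$, on $\{\re\lambda\le h\}$; in particular $\phi_x^+$ is bounded on $\overline{\C_-}$, so $x\in G_+$. The statement for $\phi_x^-$ follows by the symmetric argument, using the substitution $g(z)=\tilde\phi(-h+z)$ on the reflected half-plane. I do not expect any serious obstacle: the only point requiring care is the identification of $\phi_x^+$ with the resolvent on the common strip, which is forced by uniqueness of analytic continuation; everything else amounts to bookkeeping around a standard Phragm\'en--Lindel\"of application.
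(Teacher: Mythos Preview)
Your proof is correct and follows essentially the same approach as the paper: apply Phragm\'en--Lindel\"of (Lemma~\ref{theo:pl}) on a half-plane after noting that polynomial growth is dominated by $C\e^{|z|^b}$ for any $0<b<1$. The only cosmetic difference is that you shift by $h$ (taking the boundary line $\re\lambda=h$), whereas the paper applies the lemma with boundary $\I\R$ and then combines with the already-known strip bound; incidentally, your bound $M\|x\|$ is what the paper's stated bound $M$ should actually read.
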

\begin{proof}
   From \eqref{eq:resolvbnd-h0} we get
   $\|\phi_x^-(\lambda)\|\le M$ for $|\re\lambda| \leq h$.  
   Since by assumption $\phi_x^-$ is  polynomially bounded for $\re\lambda>h$,
   we can find for any $0<b<1$ a constant $C\geq M$ such that
   $\|\phi_x^-(\lambda)\|\leq Ce^{|z|^b}$ for $\re\lambda>0$.
   The function $\phi_x^-$ thus satisfies the hypothesis of Lemma~\ref{theo:pl}
   with $a=1$, $0<b<1$ arbitrary and $C\geq M$ large enough;
   hence for
   all $\lambda\in\Sigma=\C_+$ we obtain $\|\phi_x^-(\lambda)\| \leq M$.
   The proof for $\phi_x^+$ is similar with the Phragm\'en-Lindel\"of theorem
   applied to $\widetilde \phi_x^+(\lambda) :=\phi_x^+(-\lambda)$.
\end{proof}

\section{Spectral splitting along the imaginary axis} 
\label{sec:splitting}

In this section we prove our first spectral splitting result:
If the resolvent of $S$ is uniformly bounded along the imaginary axis,
then  the subspaces $G_\pm$ defined in \eqref{eq:G0}
are closed invariant subspaces of $S$ corresponding to its spectrum in
$\C_\pm$.
Moreover we construct a
pair of closed complementary projections $P_\pm$ onto $G_\pm$.

We make the following assumption: there exists $h>0$ such that
\begin{equation}\label{eq:resolvinc-h}
  \set{\lambda\in\C}{|\re\lambda|\leq h}\subset\varrho(S)
\end{equation}
 and
\begin{equation}\label{eq:resolvbnd-h}
  M:=\sup_{|\re\lambda|\leq h}\|(S-\lambda)^{-1}\|<\infty.
\end{equation}

For this assumption to hold it is sufficient that $\I\R\subset\varrho(S)$ and that the resolvent $(S-\lambda)^{-1}$ is uniformly bounded on $\I\R$.
If \eqref{eq:resolvinc-h} and \eqref{eq:resolvbnd-h}
hold for $h>0$, then they also hold for some $h'>h$ with a possibly larger $M$.
Both statements follow from a standard Neumann series argument.

Note that by Lemma~\ref{lem:blockop}\,\enumiref{item:blockop:ii} every strictly dichotomous operator satisfies \eqref{eq:resolvinc-h} and \eqref{eq:resolvbnd-h}.

\smallskip

For every $S$ which satisfies \eqref{eq:resolvinc-h} and \eqref{eq:resolvbnd-h} we can define the operators
\begin{equation}\label{eq:Apmdef}
   A_\pm=\frac{\pm 1}{2\pi\I}\int_{\pm h-\I\infty}^{\pm h+\I\infty}
   \frac{1}{\lambda^2}(S-\lambda)^{-1}\,d\lambda
\end{equation}
as in \cite{BGK1986}.
Note that by \eqref{eq:resolvbnd-h} the integrals converge in the uniform operator topology, hence $A_\pm$ are well-defined bounded linear operators.
Due to Cauchy's theorem, the integrals on the right hand side are independent of $h$ as long as \eqref{eq:resolvinc-h} and \eqref{eq:resolvbnd-h} hold.%
\smallskip

The next theorem extends the results from
\cite[Theorem~3.1]{BGK1986}.
In particular we obtain a spectral splitting also in the case of unbounded 
projections $P_\pm$.

\begin{theorem}\label{theo:splitting}
  Let $S(X\to X)$ be a linear operator on the Banach space $X$ satisfying
  \eqref{eq:resolvinc-h} and \eqref{eq:resolvbnd-h}. Then:
  \begin{enumerate}
  \item
    The subspaces $G_\pm$ defined in \eqref{eq:G0} are closed, $S$- and
    $(S-\lambda)^{-1}$-invariant and satisfy
    \begin{align}
      &\sigma(S)=\sigma(S|G_+)\cup\sigma(S|G_-),
      \label{eq:split:spec1}\\
      &\sigma(S|G_\pm)=\sigma(S)\cap\C_\pm,
      \label{eq:split:spec2}\\
      &\|(S|G_\pm-\lambda)^{-1}\|\leq M \quad\text{for}\quad
      \lambda\in\overline{\C_\mp},
      \label{eq:split:resolvbnd}
    \end{align}
    where $M$ is given by \eqref{eq:resolvbnd-h}.
  \item
  Let $A_\pm$ as in \eqref{eq:Apmdef}.
    Then the operators
    $P_\pm=S^2A_\pm$ are  closed complementary projections satisfying
    $\mD(P_+)=\mD(P_-)=G_+\oplus G_-$ and
    \[G_\pm=\range(P_\pm)=\ker(A_\mp).\]
    Hence $P_\pm$ are the complementary projections corresponding to the
    direct sum $G_+\oplus G_-\subset X$.
  \item\label{enumi:Ppm-repr}
    $\mD(S^2)\subset\mD(P_\pm)$ and
    \begin{equation}\label{eq:Ppmint}
      P_\pm x=\frac{\pm 1}{2\pi\I}\int_{\pm h-\I\infty}^{\pm h+\I\infty}
      \frac{1}{\lambda^2}(S-\lambda)^{-1}S^2x\,d\lambda,
      \quad x\in\mD(S^2).
    \end{equation}
  \end{enumerate}
\end{theorem}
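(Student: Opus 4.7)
The plan is to apply Lemma~\ref{lem:Apm} to the operators $A_\pm$ of \eqref{eq:Apmdef} and then identify the resulting invariant subspaces with $G_\pm$. The bound \eqref{eq:resolvbnd-h} makes the integrand in \eqref{eq:Apmdef} norm-dominated by $M/|\mu|^2$, so $A_\pm\in L(X)$. I then check the three hypotheses of Lemma~\ref{lem:Apm}. For $A_++A_-=S^{-2}$, I join the two vertical paths into a single closed contour encircling $0$ positively inside $\varrho(S)$; the end pieces at infinity vanish by the $|\mu|^{-2}$ decay and the residue of $\mu^{-2}(S-\mu)^{-1}$ at $\mu=0$ is $S^{-2}$, read off the Neumann expansion $(S-\mu)^{-1}=\sum_{k\geq 0}\mu^k S^{-(k+1)}$. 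For $A_+A_-=A_-A_+=0$, I insert the resolvent identity $(S-\lambda)^{-1}(S-\mu)^{-1}=(\lambda-\mu)^{-1}[(S-\lambda)^{-1}-(S-\mu)^{-1}]$, invoke Fubini, and observe that each resulting inner scalar contour integral vanishes because its two residues cancel. Commutativity $A_\pm S^{-1}=S^{-1}A_\pm$ is immediate after pulling $S^{-1}$ past the integral.

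Lemma~\ref{lem:Apm} now produces parts (ii) and (iii) almost verbatim: $P_\pm=S^2A_\pm$ are closed complementary projections with common domain containing $\mD(S^2)$ and $P_\pm x=A_\pm S^2x$ on $\mD(S^2)$, from which \eqref{eq:Ppmint} follows by moving the bounded operator $S^2$ inside the norm-convergent integral; the subspaces $X_\pm:=\range(P_\pm)=\ker(A_\mp)$ are closed, $S$- and $(S-\lambda)^{-1}$-invariant, and $\sigma(S)=\sigma(S|X_+)\cup\sigma(S|X_-)$.

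Part (i) amounts to proving $X_\pm=G_\pm$ via two inclusions. For $G_+\subset\ker(A_-)=X_+$: if $x\in G_+$, then $\phi_x^+$ is bounded on $\overline{\C_-}$ and, by \eqref{eq:resolvbnd-h}, on the strip $\{|\re\mu|\leq h\}$, so $\phi_x^+(\mu)/\mu^2$ is analytic on $\{\re\mu\leq-h\}$ and of size $O(|\mu|^{-2})$ there; shifting the contour in $A_-x$ from $\re\mu=-h$ to $\re\mu=-R$ (allowed by Cauchy's theorem and the decay at infinity) and letting $R\to\infty$ yields $A_-x=0$, and analogously $G_-\subset X_-$. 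For $X_+\subset G_+$: given $x\in X_+$, set, for $\re\lambda<-h$,
\[\phi_x^+(\lambda):=\frac{1}{2\pi\I}\int_{-h-\I\infty}^{-h+\I\infty}\frac{\lambda^2(S-\mu)^{-1}x}{\mu^2(\mu-\lambda)}\,d\mu,\]
a norm-convergent integral. Using $(S-\lambda)(S-\mu)^{-1}=I+(\mu-\lambda)(S-\mu)^{-1}$ and splitting, one piece reduces to the scalar integral $\frac{1}{2\pi\I}\int d\mu/[\mu^2(\mu-\lambda)]$, which by residue calculus equals $1/\lambda^2$ (contributing $-x$), and the other is $\lambda^2 A_-x=0$; hence $(S-\lambda)\phi_x^+(\lambda)=x$, so $\phi_x^+$ analytically extends $(S-\lambda)^{-1}x$ to $\{\re\lambda<-h\}$ and glues with the ambient resolvent on $\{|\re\lambda|\leq h\}$ to an analytic extension on $\{\re\lambda\leq h\}$.

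A direct norm estimate shows only that $\phi_x^+$ is polynomially bounded on $\{\re\lambda<-h\}$, so the Phragm\'en-Lindel\"of-type proposition preceding this theorem is needed to upgrade this to $\|\phi_x^+(\lambda)\|\leq M\|x\|$ throughout $\{\re\lambda\leq h\}$, yielding $x\in G_+$ together with the resolvent bound \eqref{eq:split:resolvbnd}. The two inclusions then give $X_\pm=G_\pm$; \eqref{eq:split:spec2} follows from $\sigma(S|G_\pm)\subset\C_\pm$ (via the resolvent bound) combined with the already established \eqref{eq:split:spec1} and the disjointness $\C_+\cap\C_-=\emptyset$. The main technical obstacle is the second inclusion: the explicit integral formula for $\phi_x^+$ is only polynomially bounded by elementary estimates, so the uniform bound by $M$ in \eqref{eq:split:resolvbnd} cannot be read off directly and genuinely requires the Phragm\'en-Lindel\"of principle.
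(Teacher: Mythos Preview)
Your proof is correct and follows essentially the same route as the paper: verify the hypotheses of Lemma~\ref{lem:Apm} for $A_\pm$, then identify $\ker(A_\mp)$ with $G_\pm$ via the explicit right-inverse $R_-(z)x=\phi_x^+(z)$ and Phragm\'en--Lindel\"of. Two small slips to fix: the scalar residue contributes $+x$, not $-x$, and the remaining piece is $-\lambda^2 A_-x$ (still zero); also, to pass from the pointwise bound $\|\phi_x^+(\lambda)\|\leq M\|x\|$ to the operator bound \eqref{eq:split:resolvbnd} you should note (as the paper does) that $\phi_x^+(\lambda)\in\ker(A_-)$ and that $R_-(\lambda)$ commutes with $S$, so that $\lambda\in\varrho(S|G_+)$ and $\phi_x^+(\lambda)=(S|G_+-\lambda)^{-1}x$.
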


\begin{proof}
  We first show (ii).
  In \cite{BGK1986} it has been shown that 
  $A_+A_-=A_-A_+=0$ and $A_++A_-=S^{-2}$. 
  For the convenience of the reader we sketch the proof.
  Using the resolvent identity and Fubini's theorem, we get
  \begin{align*}
     A_+A_-
     &= \int_{h-\I\infty}^{h+\I\infty}
    \int_{-h-\I\infty}^{-h+\I\infty}
    \frac{1}{\lambda^2\mu^2} (S-\lambda)^{-1} (S-\mu)^{-1}
    \,\rd\mu\,\rd\lambda
    \\[1ex]
    &=
    \int_{h-\I\infty}^{h+\I\infty}
    \biggl(\int_{-h-\I\infty}^{-h+\I\infty}
    \frac{d\mu}{\mu^2(\lambda-\mu)}\biggr)
    \frac{1}{\lambda^2}(S-\lambda)^{-1} \,\rd\lambda
    \\
    & \phantom{=\ } 
    -
    \int_{-h-\I\infty}^{-h+\I\infty}
    \biggl(\int_{h-\I\infty}^{h+\I\infty}
    \frac{d\lambda}{\lambda^2(\lambda-\mu)}\biggr)
    \frac{1}{\mu^2}(S-\mu)^{-1} 
    \,\rd\mu
  \end{align*}
  and hence $A_+A_-=0$ since the integrals in parentheses vanish.
  Clearly $A_\pm$ and $(S-\lambda)^{-1}$ commute,
  so $A_+$ and $A_-$ commute too and $A_-A_+=0$.
  By Cauchy's theorem we obtain
  \[A_++A_-=\frac{1}{2\pi\I}\int_\Gamma\frac{1}{\lambda^2}(S-\lambda)^{-1}
  \,d\lambda=S^{-2}\]
  where $\Gamma$ is a small positively oriented circle around the origin.
  Therefore, by Lemma~\ref{lem:Apm}, $P_\pm=S^2A_\pm$ are closed complementary projections with $\range(P_\pm)=\ker(A_\mp)$.

  We show next that $G_\pm=\ker(A_\mp)$.
  Let $x\in G_+$. Then $(S-\lambda)^{-1}x$ has a bounded analytic extension
  $\phi_x^+$ to $\C_-$. Consequently
  \[A_-x=\frac{-1}{2\pi\I}\int_{-h-\I\infty}^{-h+\I\infty}\frac{1}{\lambda^2}
  (S-\lambda)^{-1}x\,d\lambda=0\]
  by Cauchy's theorem.
  We thus have $G_+\subset\ker(A_-)$.
  To show the converse inclusion,
  let $\re z<-h$ and consider the bounded operator
  \begin{equation}\label{eq:Rmz}
    R_-(z)=\frac{1}{2\pi\I}\int_{-h-\I\infty}^{-h+\I\infty}
    \frac{z^2}{\lambda^2(\lambda-z)}(S-\lambda)^{-1}\,d\lambda.
  \end{equation}
  From $(S-z)(S-\lambda)^{-1}=I+(\lambda-z)(S-\lambda)^{-1}$
  and the closedness of $S$ we obtain $\range(R_-(z))\subset\mD(S)$
  and
  \[(S-z)R_-(z)=\frac{1}{2\pi\I}\int_{-h-\I\infty}^{-h+\I\infty}
  \frac{z^2}{\lambda^2(\lambda-z)}\,d\lambda -z^2A_-
  =I-z^2 A_-;\]
  thus $(S-z)R_-(z)=I$ on $\ker(A_-)$.
  By Lemma~\ref{lem:Apm},
  $\ker(A_-)$ is $S$- and $(S-\lambda)^{-1}$-invariant, and hence also $R_-(z)$-invariant.
  Since $(S-z)R_-(z)=R_-(z)(S-z)$ on $\mD(S)$, we conclude that
  $\C_-\subset \varrho(S|\ker(A_-))$ and for all $x\in\ker(A_-)$
  \begin{equation}\label{eq:resolv-G+}
    (S|\ker(A_-)-z)^{-1}x
    =\begin{cases}
    (S-z)^{-1}x,\quad &|\re z|\leq h,\\
    R_-(z)x,\quad &\re z<-h.\end{cases}
  \end{equation}
  The definition of $R_-(z)$ in conjunction with \eqref{eq:resolvbnd-h}
  implies that $\|R_-(z)\|\leq C_0|z|^2$ for 
  $\re z<-h$ with some constant $C_0$; as remarked earlier, 
  we may replace $h$ by $h/2$ in \eqref{eq:Rmz}.
  Together with \eqref{eq:resolvbnd-h}
  we see that the function 
  $f(z)=(S|\ker(A_-)-z)^{-1}$ with values in the Banach space $L(X)$
  is analytic and polynomially bounded for $\re z<0$
  and bounded by $M$ on $\I\R$.
  Hence
  the Phragm\'en-Lindel\"of theorem (Lemma~\ref{theo:pl}
  applied to $f(-z)$  with $a=1$, $0<b<1$ arbitrary and $C$ large enough)
  yields
  \begin{equation}\label{eq:resolvbnd-A-}
    \|(S|\ker(A_-)-z)^{-1}\|\leq M, \quad z\in\overline{\C_-}.
  \end{equation}
  As a consequence, if $x\in\ker(A_-)$, then 
  $(S|\ker(A_-)-z)^{-1}x$
  is a bounded analytic extension of $(S-z)^{-1}x$
  to $\overline{\C_-}$
  and hence $x\in G_+$.
  We have thus shown that $G_+=\ker(A_-)$ and the proof of $G_-=\ker(A_+)$
  is analogous. 
  In particular, we proved
  $\sigma(S|G_\pm)\subset\C_\pm$,
  \eqref{eq:split:resolvbnd} and
  \begin{align*}
     \mD(P_+)=\mD(P_-)=\range(P_+)\oplus\range(P_-)=G_+\oplus G_-.
  \end{align*}
  All remaining statements in (i) and (iii) follow from
  Lemma~\ref{lem:Apm}.
\end{proof}

\begin{corollary}\label{coroll:dichot0}
  Suppose $S(X\to X)$ satisfies \eqref{eq:resolvinc-h} and
  \eqref{eq:resolvbnd-h}. 
  Then the following are equivalent:
  \begin{enumerate}
     \item $S$ is strictly dichotomous;
     \item $X=G_+\oplus G_-$;
     \item $P_+\in L(X)$.
  \end{enumerate}
  In this case $X=G_+\oplus G_-$ is the corresponding spectral decomposition.
\end{corollary}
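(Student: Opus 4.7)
The plan is to close a three-way cycle of implications, drawing each step directly from Theorem~\ref{theo:splitting} together with Lemma~\ref{lem:dichunique} and the closed graph theorem.

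\textbf{(i) $\Rightarrow$ (ii).} Assume $S$ is strictly dichotomous with respect to some decomposition $X=X_+\oplus X_-$. Since the hypotheses of Lemma~\ref{lem:dichunique} are satisfied (we have $\I\R\subset\varrho(S)$ thanks to \eqref{eq:resolvinc-h}), part~\enumiref{enumi:dichunique} of that lemma gives $X_\pm=G_\pm$, whence $X=G_+\oplus G_-$.

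\textbf{(ii) $\Rightarrow$ (iii).} By Theorem~\ref{theo:splitting}\,(ii), $P_+$ is a closed operator with domain $\mD(P_+)=G_+\oplus G_-$. If this direct sum equals $X$, then $P_+$ is a closed operator defined on all of $X$, so the closed graph theorem yields $P_+\in L(X)$.

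\textbf{(iii) $\Rightarrow$ (i).} If $P_+\in L(X)$, then $\mD(P_+)=X$, hence $G_+\oplus G_-=X$ by Theorem~\ref{theo:splitting}\,(ii). The remaining properties required for strict dichotomy with respect to this decomposition are supplied directly by Theorem~\ref{theo:splitting}\,(i): the $G_\pm$ are closed and $S$-invariant, $\sigma(S|G_\pm)\subset\C_\pm$ by \eqref{eq:split:spec2}, and $\|(S|G_\pm-\lambda)^{-1}\|\leq M$ for $\lambda\in\overline{\C_\mp}$ by \eqref{eq:split:resolvbnd}, in particular $\I\R\subset\varrho(S)$ follows from \eqref{eq:resolvinc-h}. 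Thus $S$ is strictly dichotomous with respect to $X=G_+\oplus G_-$.

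The final assertion about the spectral decomposition is then immediate: whenever any of (i)--(iii) holds, the argument above identifies the strict-dichotomy decomposition as $X=G_+\oplus G_-$. No step is a real obstacle; the only point that merits attention is noticing that the closed graph theorem is what converts the purely algebraic identity $\mD(P_+)=X$ into boundedness of $P_+$, and that Lemma~\ref{lem:dichunique} is what rules out any ambiguity in the choice of invariant subspaces.
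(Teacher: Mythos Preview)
Your proof is correct and uses exactly the same ingredients as the paper---Lemma~\ref{lem:dichunique}, Theorem~\ref{theo:splitting}, and the closed graph theorem. The only cosmetic difference is that the paper organises the equivalences as (i)$\Leftrightarrow$(ii) and (ii)$\Leftrightarrow$(iii) rather than a three-way cycle, but the substance is identical.
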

\begin{proof}
  In Lemma~\ref{lem:dichunique} we have already seen that strict dichotomy implies
  $X_\pm=G_\pm$ and hence $X=G_+\oplus G_-$.
  Conversely, if $X=G_+\oplus G_-$, then Theorem~\ref{theo:splitting} implies that $S$ is strictly dichotomous for the choice $X_\pm = G_\pm$.
  Finally (ii)$\Leftrightarrow$(iii) follows from
  $\mD(P_+)=G_+\oplus G_-$
  and the closed graph theorem.
\end{proof}

In Theorem~\ref{theo:splitting} we did not assume that $S$ is densely defined. 
If it is, then the projections $P_\pm$
are densely defined too, and 
we obtain a nice  criterion for strict dichotomy.

{\samepage
\begin{corollary}\label{coroll:dichot}
  Let $S(X\to X)$ be densely defined and satisfy
  \eqref{eq:resolvinc-h} and \eqref{eq:resolvbnd-h}. Then
  $G_+\oplus G_-\subset X$ is dense and
  the closed projections $P_\pm$ are densely defined.
  Moreover, the following assertions are equivalent:
  \begin{enumerate}
     \item $S$ is strictly dichotomous;
     \item $P_+$ is bounded on some dense subspace of $\mD(P_+)$;
     \item the operator $P$ defined by
     \begin{equation}\label{eq:Pint}
	Px=\frac{1}{2\pi\I}\int_{h-\I\infty}^{h+\I\infty}\frac{1}{\lambda^2}
	(S-\lambda)^{-1}S^2x\,d\lambda,\quad x\in\mD(S^2),
     \end{equation}
     is bounded on some dense subspace of $\mD(S^2)$.
  \end{enumerate}
  In this case $P_+$ is the unique bounded extension of $P$ to $X$.
\end{corollary}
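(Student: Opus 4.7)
My approach is to reduce everything to Corollary~\ref{coroll:dichot0} (which equates strict dichotomy with $P_+\in L(X)$) and to the integral representation of Theorem~\ref{theo:splitting}\,(iii). The preliminary step is to show that $\mD(S^2)$ is dense in $X$: then Theorem~\ref{theo:splitting}\,(iii) gives $\mD(S^2)\subset\mD(P_\pm)=G_+\oplus G_-$, yielding density of both $\mD(P_\pm)$ and $G_+\oplus G_-$. Density of $\mD(S^2)$ is a standard consequence of $S$ being closed densely defined with $0\in\varrho(S)$: the bounded operator $T=S^{-1}$ has dense range $\mD(S)$, so by Hahn--Banach its Banach-space dual $T^*$ is injective, hence so is $(T^*)^n=(T^n)^*$, which implies $\range(T^n)=\mD(S^n)$ is dense for every $n\geq 1$.

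For the equivalences, (i)$\Leftrightarrow$$P_+\in L(X)$ is Corollary~\ref{coroll:dichot0}, so (i)$\Rightarrow$(ii) is trivial with $V=\mD(P_+)=X$. For the converse (ii)$\Rightarrow$(i), assume $P_+$ is bounded on a dense subspace $V\subset\mD(P_+)$; since $\mD(P_+)$ is dense in $X$ by the preliminary step, so is $V$. Given $y\in X$, pick $v_n\in V$ with $v_n\to y$; then $(P_+v_n)$ is Cauchy by the uniform bound on $V$, hence converges to some $z\in X$. Closedness of $P_+$ (Theorem~\ref{theo:splitting}\,(ii)) yields $y\in\mD(P_+)$ and $P_+y=z$, whence $\mD(P_+)=X$ and $P_+\in L(X)$, so (i) holds.

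The equivalence (ii)$\Leftrightarrow$(iii) and the uniqueness statement both rest on the observation that \eqref{eq:Ppmint} precisely says $P=P_+|\mD(S^2)$. Hence if (iii) holds with $V\subset\mD(S^2)$ dense, then $P_+|V=P|V$ is bounded and $V$ is dense in $X$ because $\mD(S^2)$ is, giving (ii); conversely, (i) makes $P_+$ bounded on all of $X$, so $P=P_+|\mD(S^2)$ is bounded on the dense subspace $\mD(S^2)$ of itself, yielding (iii). Under these equivalent conditions, the bounded operator $P$ on the dense subspace $\mD(S^2)$ has a unique bounded extension to $X$, which must coincide with $P_+$ by closedness of $P_+$.

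The only genuinely subtle ingredient is the density of $\mD(S^2)$, which requires the duality argument above; all other assertions reduce to the closedness of $P_+$, the restriction formula from Theorem~\ref{theo:splitting}\,(iii), and Corollary~\ref{coroll:dichot0}.
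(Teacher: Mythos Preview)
Your proof is correct and follows essentially the same approach as the paper's: both reduce everything to Corollary~\ref{coroll:dichot0}, the closedness of $P_+$, and the identity $P=P_+|\mD(S^2)$ from Theorem~\ref{theo:splitting}\,\enumiref{enumi:Ppm-repr}. The only difference is that you spell out the density of $\mD(S^2)$ via a duality argument (which is correct, though a simpler direct argument using that $S^{-1}$ is bounded and maps the dense set $\mD(S)$ onto $\mD(S^2)$ would also work), whereas the paper treats this as immediate.
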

}

\begin{proof}
  The first assertions are immediate from
  $\mD(S^2)\subset\mD(P_\pm)=G_+\oplus G_-$.

  (i)$\Rightarrow$(ii): This follows from Corollary~\ref{coroll:dichot0}.

  (ii)$\Rightarrow$(i): If $P_+$ is bounded on a dense subspace,
  then the closedness of $P_+$ implies that 
  $\mD(P_+)=X$. Thus $P_+\in L(X)$ and $S$ is strictly dichotomous.

  (ii)$\Leftrightarrow$(iii) and the final assertion: 
  This is clear since \eqref{eq:Ppmint} implies that $P$ is a restriction of 
  $P_+$.
\end{proof}
  
\begin{remark}\label{rem:theo-splitting}
  Theorem~\ref{theo:splitting} and Corollaries~\ref{coroll:dichot0}
  and~\ref{coroll:dichot} generalise and extend the results
  from \cite[Theorem~3.1]{BGK1986}. 

  \begin{enumerate}
  \item 
    Probably the main result of \cite[Theorem~3.1]{BGK1986} 
    on spectral splitting for densely defined $S$ is that if
    the operator $P$ in \eqref{eq:Pint} is bounded 
    on $\mD(S^2)$, then $S$ is dichotomous.
    Corollary~\ref{coroll:dichot} shows that the boundedness on some dense subspace of $\mD(S^2)$ is sufficient and that $S$ is even strictly dichotomous in this case.
    Note that since
    \cite{LT2001,LRR2002,tretter-wyss} all use
    \cite[Theorem~3.1]{BGK1986} to prove dichotomy, the corresponding
    operators in those papers are in fact strictly dichotomous.

  \item
    We have shown that also in
    the non-dichotomous case, i.e.,\ when $P_\pm$ are unbounded,
    $G_\pm$ are closed invariant subspaces with
    $\sigma(S|G_\pm)\subset \C_\pm$ and
    $\sigma(S|G_+)\cup\sigma(S|G_-)= \sigma(S)$.
    In \cite{BGK1986} the closedness and $S$-invariance of $G_\pm$
    has only been obtained for exponentially dichotomous $S$.
  \item
    Using the Phragm\'en-Lindel\"of theorem, we showed $G_\pm=\ker(A_\mp)$
    where\-as in \cite{BGK1986} only the inclusion $G_\pm\subset\ker(A_\mp)$ 
    was obtained.
  \item 
    We showed that $P_\pm$ as defined in Theorem~\ref{theo:splitting}\,(ii) are closed projections even if they are unbounded.
    This is used e.g.\  in 
    Corollary~\ref{coroll:dichot} where it is sufficient to check 
    boundedness on any dense subspace.
    In Section~\ref{sec:pert} this allows us to prove perturbation results under weaker conditions on the domains of the involved operators.
  \item
    In \cite{BGK1986} it is always assumed that $S$ is densely defined.
    Theorem~\ref{theo:splitting} and Corollary~\ref{coroll:dichot0} are
    valid also if $S$ is not densely defined.
  \end{enumerate}
\end{remark}

\begin{example}\label{ex:unbsplit}
   We continue Example~\ref{ex:unbproj}.
   A straightforward calculation shows that 
   $\sup_{\lambda\in\I\R}\|(S-\lambda)^{-1}\|\leq 3$
   and hence \eqref{eq:resolvinc-h} and \eqref{eq:resolvbnd-h}
   are satisfied.
   From Theorem~\ref{theo:splitting}, in particular \eqref{eq:Apmdef},
   it follows that $A_\pm$ and $P_\pm$ are block diagonal
   and given by
   \begin{gather*}
      A_+=\diag(A_{1}^{+},A_{2}^{+},\dots),\quad 
      A_-=\diag(A_{1}^{-},A_{2}^{-},\dots),\\
      P_+=\diag(P_{1}^{+},P_{2}^{+},\dots),\quad 
      P_-=\diag(P_{1}^{-},P_{2}^{-},\dots),
   \end{gather*}
   where $A_{n}^{\pm}$ and $P_{n}^{\pm}$ are as in Example~\ref{ex:unbproj}. 
   The operators $P_+$ and $P_-$ are unbounded
   and $S$ is not dichotomous, compare Remark~\ref{rem:genvec}.
\end{example}

\begin{remark}
   Since $P_\pm$ are closed operators with $\mD(S^2)\subset \mD(P_\pm)$, their restrictions $P_\pm|\mD(S^2)$ are bounded in the graph norm of $S^2$.
   In the almost bisectorial case, we even have $\mD(S)\subset \mD(P_\pm)$, and therefore $P_\pm|\mD(S)$ are bounded in the graph norm of $S$ (see Theorem~\ref{theo:almbisect}).
   In the general case it is not clear if $\mD(S)\subset \mD(P_\pm)$ and if the restriction $P_\pm|\mD(S)$ is bounded.
\end{remark}

\section{Bisectorial and almost bisectorial operators} 
\label{sec:bisect}

In this section we investigate the spectral splitting of bisectorial and
almost bisectorial operators.  Their resolvent norm is not only bounded on the
imaginary axis as assumed in Section~\ref{sec:splitting}, it even decays like
$|\lambda|^{-\beta}$.
\smallskip

For $0\le \theta\le \pi$ we define the sectors
\begin{equation}
   \label{eq:secdef}
   \Sigma_\theta = 
   \{ z\in\C : |\arg z | \le \theta \}.
\end{equation}
Let us first recall the notion of sectorial
and almost sectorial operators,
see e.g.\ \cite{haase,periago-straub}.

\begin{definition}
  \label{def:sec}
  A linear operator $S(X\to X)$ is called \emph{sectorial}
  if there exist
  $0<\theta<\pi$ and $M>0$ such that
  $\sigma(S)\subset\Sigma_\theta$ and
  \begin{equation}\label{eq:sect}
    \|(S-\lambda)^{-1}\|\leq\frac{M}{|\lambda|}
    \quad\text{for}\quad \lambda\in\C\setminus\Sigma_\theta.
  \end{equation}
  $S$ is called \emph{almost sectorial}
  if there exist $0<\beta<1$, $0<\theta<\pi$ and $M>0$ such that
  $\sigma(S)\subset\Sigma_\theta$ and
  \begin{equation}\label{eq:almsect}
    \|(S-\lambda)^{-1}\|\leq\frac{M}{|\lambda|^\beta}
    \quad\text{for}\quad \lambda\in\C\setminus\Sigma_\theta.
  \end{equation}
\end{definition}

\begin{remark}
  There are subtle differences in the behaviour of 
  sectorial and almost sectorial operators:
  \begin{enumerate}
  \item
    If $S$ is sectorial with angle $\theta$, then \eqref{eq:sect} also holds 
    with a smaller angle
    $0<\theta'<\theta$, though the constant $M$ 
    may be bigger for $\theta'$ as can be shown by a simple Neumann series argument.

  \item
    If $S$ is almost sectorial, then $0\in\varrho(S)$ (see e.g.\ \cite[Remark~2.2]{periago-straub}).

  \item
    If $S$ is sectorial and $0\in\varrho(S)$, then $S$ is almost sectorial.
    This is true because $\|(S-\lambda)^{-1}\|$ is bounded in a neighbourhood
    of zero, and we easily obtain \eqref{eq:almsect} from \eqref{eq:sect} (with
    a different constant $M$).
  \end{enumerate}
  Note that (i) is not necessarily true for almost sectorial
  operators and that sectorial operators may have zero in their spectrum.

\end{remark}

If we require the resolvent estimates only on the imaginary
axis and allow spectrum on both sides of it, we obtain
the definition of bisectorial and almost bisectorial operators.

\begin{definition}
   \label{def:bisec}
  An operator $S(X\to X)$ is called \emph{bisectorial} if 
  $\I\R\setminus\{0\}\subset\varrho(S)$ and
  \begin{equation}\label{eq:bisect}
    \|(S-\lambda)^{-1}\|\leq \frac{M}{|\lambda|},\quad 
    \lambda\in\I\R\setminus\{0\}.
  \end{equation}
  $S$ is called \emph{almost bisectorial} if 
  $\I\R\setminus\{0\}\subset\varrho(S)$ and
  there exists $0<\beta<1$ such that
  \begin{equation}\label{eq:almbisect}
    \|(S-\lambda)^{-1}\|\leq \frac{M}{|\lambda|^\beta},\quad 
    \lambda\in\I\R\setminus\{0\}.
  \end{equation}
\end{definition}
Bisectorial operators have been studied e.g.\ in
\cite{arendt-zamboni,tretter-wyss}.
\smallskip

The following results are analogous to the sectorial case.
They imply that almost bisectorial operators and bisectorial operators with
$0\in\varrho(S)$ fulfil the assumptions of Theorem~\ref{theo:splitting}. 
\begin{remark}\label{rem:bisect}
  \begin{enumerate}
  \item\label{enumi:bisect}
    If $S$ is bisectorial, then there exists $0<\theta<\pi/2$ such that
    the bisector
    \[\Omega_\theta=
    \C\setminus \big( \Sigma_\theta \cup (-\Sigma_\theta)\big)
    =
    \set{\lambda\in\C}{\theta < |\arg\lambda| < \pi-\theta}\]
    belongs to $\varrho(S)$ and \eqref{eq:bisect} holds on $\Omega_\theta$,
    see Figure~\ref{fig:omega}.
  \item\label{enumi:almbisect-resolv}
    If $S$ is almost bisectorial, then $0\in\varrho(S)$.
  \item\label{enumi:bisect-impl}
    If $S$ is bisectorial and $0\in\varrho(S)$, then $S$ is almost
    bisectorial.

  \item If $S$ is almost bisectorial or bisectorial with $0\in\varrho(S)$, then $S$ satisfies \eqref{eq:resolvinc-h} and \eqref{eq:resolvbnd-h}
    from Section~\ref{sec:splitting}.
  \end{enumerate}
\end{remark}

Similar to Remark~\ref{rem:bisect}\,\eqref{enumi:bisect},
the resolvent estimate of an almost bisectorial operator
actually holds inside a whole region around $\I\R$:

\begin{figure}

   \begin{tikzpicture}[scale=0.9]
      \begin{scope}
	 \clip (-2.8, -2.1) rectangle (2.8,2.1);

	 \draw[fill=yellow](60:3)--(60:-3)--(-60:3)--(-60:-3);
	 \draw[dashed,fill=orange!70!yellow](40:-4)--(40:4)--(-40:4)--(-40:-4);

	 \draw[->] (1,0) arc(0:60:1);
	 
	 \node at (48:1.3){$\theta$};
	 \node at (-.4,1.5){$\Omega_\theta$};
	 \node at (1.8,.5){$\sigma(S)$};
      \end{scope}

	 \draw[->] (-3,0) -- (3,0) node[right] {};
	 \draw[->] (0,-2.4) -- (0,2.4) node[above] {};
   \end{tikzpicture}
   \hspace*{\fill}
   \begin{tikzpicture}[scale=0.9]
      \begin{scope}
	 \clip (-2.8, -2.1) rectangle (2.8,2.1);
	 \draw[domain=-2:2,smooth,variable=\x,black,fill=yellow] plot ({\x},{1.5*\x*\x});
	 \draw[domain=-2:2,smooth,variable=\x,black,fill=yellow] plot ({\x},{-1.5*\x*\x});

	 \draw[black,dashed,fill=orange!70!yellow]
	 plot[domain=0:-3,smooth,variable=\x,black,dashed] ({\x},{-.6*\x*\x})
	 -- plot[domain=-3:0,smooth,variable=\x,black,dashed] ({\x},{.6*\x*\x});

	 \draw[black,dashed,fill=orange!70!yellow]
	 plot[domain=0:3,smooth,variable=\x,black,dashed] ({\x},{-.6*\x*\x})
	 -- plot[domain=3:0,smooth,variable=\x,black,dashed] ({\x},{.6*\x*\x});
      \end{scope} 

      \node at (-.5,1.5) {$\Omega$};
      \node at (2,.8){$\sigma(S)$};

      \draw[->] (-3,0) -- (3,0) node[right] {};
      \draw[->] (0,-2.4) -- (0,2.4) node[above] {};
   \end{tikzpicture}

   \caption{\label{fig:omega}Location of the spectrum of a bisectorial and an almost bisectorial operator.}
\end{figure}
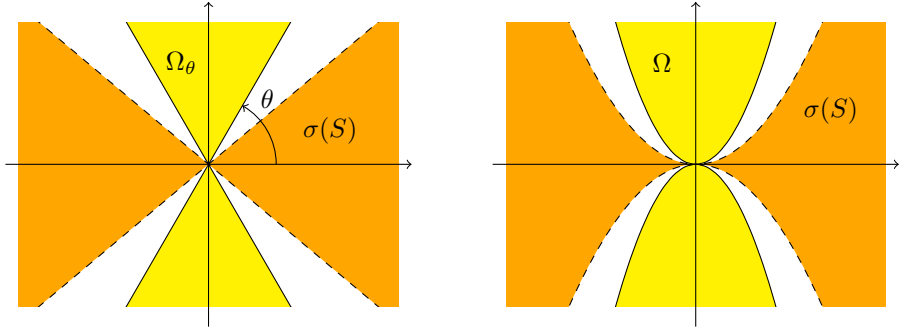%

\begin{lemma}\label{lem:almbisect}
  Let $S(X\to X)$ be almost bisectorial with constants
  $0<\beta<1$ and $M>0$ as in \eqref{eq:almbisect}.
  Then for every $\alpha<1/M$ the parabola shaped region
  \begin{equation*}
    \Omega=
    \set{a+\I b\in\C\setminus\{0\}}{|a|\leq \alpha|b|^\beta}
  \end{equation*}
  belongs to the resolvent set, $\Omega\subset\varrho(S)$,
  and  \eqref{eq:almbisect}  holds for all
  $\lambda\in\Omega$ (typically with a larger constant $M$),
  see Figure~\ref{fig:omega}.
\end{lemma}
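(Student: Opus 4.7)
The plan is to invert $S-\lambda$ by a Neumann perturbation from the imaginary axis. For $\lambda=a+\I b\in\Omega$ one necessarily has $b\neq0$, so $\lambda_0:=\I b\in\I\R\setminus\{0\}\subset\varrho(S)$ and
\[
   S-\lambda=(S-\lambda_0)\bigl(I-a(S-\lambda_0)^{-1}\bigr).
\]
Estimate \eqref{eq:almbisect} at $\lambda_0$ gives $\|(S-\lambda_0)^{-1}\|\le M/|b|^\beta$, while $|a|\le\alpha|b|^\beta$ yields $\|a(S-\lambda_0)^{-1}\|\le\alpha M<1$. The right factor is therefore invertible by the Neumann series, so $\lambda\in\varrho(S)$ and
\[
   \|(S-\lambda)^{-1}\|\le\frac{M}{(1-\alpha M)\,|b|^\beta}.
\]

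The second step is to replace $|b|^\beta$ in the denominator by $|\lambda|^\beta$. From $|a|\le\alpha|b|^\beta$ one has $|\lambda|^2\le\alpha^2|b|^{2\beta}+b^2$. If $|b|\ge1$, then $|b|^{2\beta}\le b^2$ (since $\beta<1$), so $|\lambda|\le\sqrt{1+\alpha^2}\,|b|$ and consequently $|\lambda|^\beta\le(1+\alpha^2)^{\beta/2}|b|^\beta$. This already establishes \eqref{eq:almbisect} on $\Omega\cap\{|b|\ge1\}$ with the (larger) constant $M(1+\alpha^2)^{\beta/2}/(1-\alpha M)$.

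The main obstacle is the regime $|b|\le1$: the elementary inequality $|\lambda|^\beta\lesssim|b|^\beta$ breaks down as $|b|\to0$ precisely because $\beta<1$. I would bypass this by a compactness argument. The closure $K=\overline{\{\lambda\in\Omega:|b|\le1\}}$ in $\C$ is compact and, thanks to the first step together with $0\in\varrho(S)$ (Remark~\ref{rem:bisect}\,\enumiref{enumi:almbisect-resolv}), is entirely contained in $\varrho(S)$. Continuity of the resolvent on $\varrho(S)$ then bounds $\|(S-\lambda)^{-1}\|$ uniformly on $K$ by some constant $C_0$, and since $|\lambda|\le\sqrt{1+\alpha^2}$ on $K$, for $\lambda\in K\setminus\{0\}$ one has $\|(S-\lambda)^{-1}\|\le C_0\le C_0(1+\alpha^2)^{\beta/2}/|\lambda|^\beta$. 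Taking the maximum of the two constants produces a single $M$ for which \eqref{eq:almbisect} holds on all of $\Omega$.
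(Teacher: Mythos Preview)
Your proof is correct and follows essentially the same approach as the paper: a Neumann series perturbation from the point $\I b$ on the imaginary axis, the split into $|b|\ge1$ and $|b|\le1$, and the use of $0\in\varrho(S)$ together with compactness for the latter. The only cosmetic differences are that the paper uses the cruder bound $|\lambda|\le(1+\alpha)|b|$ in place of your $\sqrt{1+\alpha^2}\,|b|$, and phrases the compactness step more tersely.
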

\begin{proof}
  Let $\lambda=a+\I b\in\Omega$. Then the identity
  \[S-\lambda=\left(I-a(S-\I b)^{-1}\right)(S-\I b)\]
  and the estimate
  \[\|a(S-\I b)^{-1}\|\leq\frac{M|a|}{|b|^\beta}\leq \alpha M<1\]
  imply $\lambda\in\varrho(S)$ and thus $\Omega\subset\varrho(S)$.
  Moreover
  \[\|(S-\lambda)^{-1}\|\leq\frac{1}{1-\|a(S-\I b)^{-1}\|}\|(S-\I b)^{-1}\|
  \leq\frac{M}{(1-\alpha M)|b|^\beta}.\]
  Now if also $|b|\geq1$, then
  $|\lambda|\leq \alpha |b|^\beta+|b|\leq(1+\alpha)|b|$ and hence
  \[\|(S-\lambda)^{-1}\|\leq\frac{M(1+\alpha)^\beta}{(1-\alpha M)|\lambda|^\beta},
  \quad
  \lambda=a+\I b\in\Omega,\,|b|\geq1.\]
  Since $0\in\varrho(S)$ by Remark~\ref{rem:bisect}\eqref{enumi:almbisect-resolv}
  and $(S-\lambda)^{-1}$ is uniformly bounded on compact subsets of
  $\varrho(S)$, the proof is complete.
\end{proof}

A similar result can be shown for an almost sectorial operator $S$.
Let $\theta$ as in Definition~\ref{def:sec}.
Then $\varrho(S)$ contains a parabola around every ray 
$\{r\e^{\I\phi} : r\ge 0\}$ with $\theta\le |\phi| \le \pi$.
\medskip

In the rest of this section we will investigate the spectral splitting
properties of bisectorial and almost bisectorial operators.
Compared with Theorem~\ref{theo:splitting},
we obtain simplified formulas for
the projections $P_\pm=S^2A_\pm$ and show that the restrictions $S|G_\pm$ to
the spectral subspaces are sectorial and almost sectorial, respectively.

For an almost bisectorial operator $S$ let
\begin{equation}\label{eq:Bpmdef}
   B_\pm=\frac{\pm 1}{2\pi\I}\int_{\pm h-\I\infty}^{\pm h+\I\infty}
   \frac{1}{\lambda}(S-\lambda)^{-1}\,d\lambda
\end{equation}
with $h>0$ small enough.
By \eqref{eq:almbisect} and Lemma~\ref{lem:almbisect} the integrals converge
in the uniform operator topology.
Hence $B_\pm$ are well-defined bounded
linear operators and, due to Cauchy's theorem, the integrals on the right hand
side are independent of $h$ for $h$ small enough.

\begin{theorem}\label{theo:almbisect}
   Let $S(X\to X)$ be almost bisectorial and $P_\pm$ as in Theorem~\ref{theo:splitting}.
   Then:
   \begin{enumerate}
      \item\label{enumi:PB}
      $P_\pm=SB_\pm$,
      $\mD(S)\subset\mD(P_\pm)$ and
      \begin{equation}\label{eq:Ppm-int-defS}
	 P_\pm x=\frac{\pm 1}{2\pi\I}\int_{\pm h-\I\infty}^{\pm h+\I\infty}
	 \frac{1}{\lambda}(S-\lambda)^{-1}Sx\,d\lambda,
	 \quad x\in\mD(S).
      \end{equation}
      \item\label{enumi:restr-almsect}
      $\pm S|G_\pm$ are almost sectorial with angle $\theta=\pi/2$
      and unchanged constants $M,\beta$.
      \item\label{enumi:restr-sect}
      Let $S$ be bisectorial with $0\in\varrho(S)$ and $\theta$ as in Remark~\ref{rem:bisect}\,\enumiref{enumi:bisect}.
      Then $\pm S|G_\pm$ are sectorial with angle $\theta$
      and unchanged constant $M$.
   \end{enumerate}
\end{theorem}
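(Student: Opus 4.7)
For part (i), I plan to apply Remark~\ref{rem:Bpm} to $B_\pm$. The three hypotheses follow the pattern of Theorem~\ref{theo:splitting}(ii): contracting the two vertical contours into a small loop around $\lambda = 0$ and applying Cauchy's residue theorem to the Laurent expansion $\lambda^{-1}(S-\lambda)^{-1} = \lambda^{-1}S^{-1} + S^{-2} + O(\lambda)$ yields $B_+ + B_- = S^{-1}$. The orthogonality $B_+B_- = B_-B_+ = 0$ is a carbon copy of the $A_+A_-$ computation using Fubini and the resolvent identity, the inner integrals $\int \mu^{-1}(\lambda-\mu)^{-1}\,d\mu$ vanishing because their poles lie on one side of the contour. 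Commutation with $S^{-1}$ is inherited from the resolvent. Hence by Remark~\ref{rem:Bpm}, $P_\pm' := SB_\pm$ are closed complementary projections with $\mD(S)\subseteq\mD(P_\pm')$ and $\range P_\pm' = \ker B_\mp$.

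To identify $P_\pm' = P_\pm$, I establish $A_\pm = B_\pm S^{-1}$ as an identity of bounded operators. Substituting $(S-\lambda)^{-1}S^{-1} = \lambda^{-1}[(S-\lambda)^{-1} - S^{-1}]$ into the integral for $B_\pm S^{-1}x$ produces $A_\pm x$ plus a residual term $\propto S^{-1}x \cdot \int\lambda^{-2}\,d\lambda$, which vanishes as a principal value on the vertical line. Consequently $x \in \ker A_\pm$ iff $S^{-1}x\in\ker B_\pm$; combined with the $S$- and $S^{-1}$-invariance of $\ker B_\pm$ from Remark~\ref{rem:Bpm}, this yields $\ker A_\pm = \ker B_\pm$. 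Therefore $P_\pm$ and $P_\pm'$ are closed projections with identical range $G_\pm$, identical kernel $G_\mp$, and identical domain $G_+\oplus G_-$, and hence coincide. The integral formula \eqref{eq:Ppm-int-defS} for $x\in\mD(S)$ then follows from $P_\pm x = SB_\pm x$ by pulling $S$ inside the integral: the integrand $\lambda^{-1}(S-\lambda)^{-1}Sx$ is bounded by $M\|Sx\||\lambda|^{-1-\beta}$, hence absolutely integrable, so closedness of $S$ applies.

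For part (ii), $\sigma(\pm S|G_\pm)\subseteq\overline{\C_+}=\Sigma_{\pi/2}$ is Theorem~\ref{theo:splitting}(i). For the resolvent bound I treat $S|G_+$; the other case is symmetric. On $\I\R\setminus\{0\}$, invariance gives $\|(S|G_+-\lambda)^{-1}\|\leq\|(S-\lambda)^{-1}\|\leq M|\lambda|^{-\beta}$, whereas on $\C_-$ Theorem~\ref{theo:splitting}(iii) only yields the weaker bound $M$. To upgrade the latter, consider the $L(G_+)$-valued function $g(\lambda) := (-\lambda)^\beta(S|G_+-\lambda)^{-1}$ on $\overline{\C_-}\setminus\{0\}$, using the principal branch of $(-\lambda)^\beta$ (well-defined and analytic since $-\lambda\in\overline{\C_+}$; continuous at $0$ with $g(0)=0$). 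Then $\|g\|\leq M$ on $\I\R\setminus\{0\}$ and $\|g\|\leq M|\lambda|^\beta$ on $\C_-$. Applying Phragmén--Lindelöf (Lemma~\ref{theo:pl}, to $\lambda\mapsto g(-\lambda)$ on $\Sigma_{\pi/2}$ with $a=1$ and any $b\in(\beta,1)$) yields $\|g\|\leq M$ throughout, i.e., $\|(S|G_+-\lambda)^{-1}\|\leq M|\lambda|^{-\beta}$ on $\C_-$.

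For part (iii), the same Phragmén--Lindelöf argument with $\beta=1$ gives $\|(S|G_+-\lambda)^{-1}\|\leq M|\lambda|^{-1}$ on $\overline{\C_-}\setminus\{0\}$. The remaining part of $\C\setminus\Sigma_\theta$ lies in $\Omega_\theta\cap\C_+$, where the bisectorial bound of Remark~\ref{rem:bisect}\,\enumiref{enumi:bisect} together with $G_+$-invariance gives the same estimate, and the two regions cover $\C\setminus\Sigma_\theta$. The main obstacle is the identification $P_\pm = SB_\pm$ in (i): two operator-theoretically distinct Cauchy integrals must be shown to produce the same projection, and the kernel identity $\ker A_\pm = \ker B_\pm$, mediated by $A_\pm = B_\pm S^{-1}$, is the crucial bridge. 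A secondary delicacy is the choice of branch of $(-\lambda)^\beta$ in the Phragmén--Lindelöf step, which must be made on the half-plane free of $\sigma(S|G_+)$ to keep $g$ analytic.
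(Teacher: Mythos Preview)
Your argument is correct, and for parts~(ii) and~(iii) it is essentially identical to the paper's (same Phragm\'en--Lindel\"of application to $\lambda^\beta(S|G_+-\lambda)^{-1}$; your explicit coverage of $\C\setminus\Sigma_\theta$ by $\overline{\C_-}$ and $\Omega_\theta\cap\C_+$ in (iii) fills in a step the paper leaves implicit).

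For part~(i), however, you take an unnecessary detour. You build projections $P_\pm':=SB_\pm$ via Remark~\ref{rem:Bpm}, separately establish $A_\pm=B_\pm S^{-1}$, and then argue $\ker A_\pm=\ker B_\pm$ to identify $P_\pm'=P_\pm$. The paper uses the \emph{same} identity $A_\pm=B_\pm S^{-1}=S^{-1}B_\pm$ (via the resolvent identity $(S-\lambda)^{-1}=S^{-1}+\lambda(S-\lambda)^{-1}S^{-1}$), but then simply observes that this gives $P_\pm=S^2A_\pm=S^2S^{-1}B_\pm=SB_\pm$ directly as an equality of operators, domains included: $x\in\mD(S^2A_\pm)$ iff $S^{-1}B_\pm x\in\mD(S^2)$ iff $B_\pm x\in\mD(S)$ iff $x\in\mD(SB_\pm)$. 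No appeal to Remark~\ref{rem:Bpm} or kernel comparison is needed. Likewise, the integral formula~\eqref{eq:Ppm-int-defS} drops out from $P_\pm x=SB_\pm x=B_\pm Sx$ for $x\in\mD(S)$, rather than by pushing $S$ through the integral via closedness. Your route works, but the paper's is shorter and avoids re-deriving projection structure you already have from Theorem~\ref{theo:splitting}.
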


\begin{proof}
  \enumiref{enumi:PB}:
  Using the resolvent identity
  $(S-\lambda)^{-1}=S^{-1}+\lambda(S-\lambda)^{-1}S^{-1}$
  we obtain from \eqref{eq:Apmdef}
  \[A_\pm=\frac{\pm 1}{2\pi\I}\int_{\pm h-\I\infty}^{\pm h+\I\infty}
  \frac{1}{\lambda^2}S^{-1}\,d\lambda
  +\frac{\pm 1}{2\pi\I}\int_{\pm h-\I\infty}^{\pm h+\I\infty}
  \frac{1}{\lambda}(S-\lambda)^{-1}S^{-1}\,d\lambda=B_\pm S^{-1}\]
  because the first integral vanishes by Cauchy's theorem.
  Moreover $S^{-1}B_\pm=B_\pm S^{-1}$.
  Therefore $P_\pm=S^2A_\pm=S^2S^{-1}B_\pm=SB_\pm$.
  For $x\in\mD(S)$ we get $B_\pm Sx=SB_\pm x$ and in particular 
  $x\in\mD(P_\pm)$.

  \enumiref{enumi:restr-almsect}:
  Consider $\lambda^\beta=\exp(\beta\log\lambda)$ 
  where $\log$ is a branch
  of the logarithm on $\C_-$. Then the mapping $\lambda\mapsto\lambda^\beta$
  is analytic on $\C_-$
  and continuous on $\overline{\C_-}$.
  The almost bisectoriality of $S$ yields
  $\|\lambda^\beta(S|G_+-\lambda)^{-1}\|\leq M$ for $\lambda\in\I\R$ since 
  $(S|G_+-\lambda)^{-1}=(S-\lambda)^{-1}|G_+$.
  Here we used that $0\in\varrho(S)$ by 
  Remark~\ref{rem:bisect}\,\enumiref{enumi:almbisect-resolv}.
  The $L(X)$-valued function $f(\lambda)=\lambda^\beta(S|G_+-\lambda)^{-1}$
  is thus analytic on $\C_-$ and bounded by $M$ on $\I\R$.
  Since $(S|G_+-\lambda)^{-1}$ is bounded on $\C_-$
  by Theorem~\ref{theo:splitting}, $f$ is polynomially bounded
  on $\C_-$.
  The Phragm\'en-Lindel\"of theorem (Lemma~\ref{theo:pl} with $a=1$ applied 
  to $f(-\lambda)$) then yields
  $\|\lambda^\beta(S|G_+-\lambda)^{-1}\|\leq M$ for 
  $\lambda\in\overline{\C_-}$.
  The proof for $S|G_-$ is analogous. 
  With $\beta=1$ we obtain \enumiref{enumi:restr-sect}.
\end{proof}

Statements \eqref{enumi:restr-almsect} and \eqref{enumi:restr-sect} of the Theorem
remain true when $S|G_\pm$ are replaced by
the operators $S_{M\pm}$ 
from Section~\ref{sec:Mpm} as follows from Lemma~\ref{lem:Mpm}\,\enumiref{enumi:Spm-resolv}.

\begin{remark}
  The operators $B_\pm$ satisfy the relations
  \[B_++B_-=S^{-1},\qquad B_+B_-=B_-B_+=0.\]
  These identities can be obtained either
  from the corresponding relations for $A_\pm$
  via  $A_\pm=S^{-1}B_\pm=B_\pm S^{-1}$ 
  or  from \eqref{eq:Bpmdef} by direct computation.
  The latter approach, together with Remark~\ref{rem:Bpm},
  yields an alternative proof of Theorem~\ref{theo:splitting}
  for  almost bisectorial operators.
  In the bisectorial case, this was used in \cite{arendt-zamboni}.
  There the sectoriality of the spectral parts $S|G_\pm$ was obtained
  (\cite[p.~215]{arendt-zamboni}), but not the $S$-invariance of $G_\pm$
  and the decomposition \eqref{eq:split:spec1} of the spectrum.
\end{remark}

To illustrate the situation of Theorem~\ref{theo:almbisect},
we consider
an almost bisectorial operator which is not dichotomous and whose
projections $P_\pm$ are (therefore) unbounded.
This is a variant of Example~\ref{ex:unbproj} and~\ref{ex:unbsplit}.
\begin{example}\label{ex:almbisect}
  Let $0<p<1$ and consider the block diagonal operator $S$
  on $X=l^2$
  given by
  \[S=\diag(S_1,S_2,\dots), 
  \quad
  S_n=\pmat{n&2n^{1+p}\\0&-n}.\]
  Direct calculations similar to Example~\ref{ex:unbproj}
  yield $\lim_{n\to\infty}\|(S_n-\lambda)^{-1}\|=0$ whenever
  $\lambda\not\in\Z\setminus\{0\}$.
  Hence $\sigma(S)=\Z\setminus\{0\}$ and $S$ has a compact resolvent.
  Moreover,
  \[\|(S-\lambda)^{-1}\|\leq\frac{M}{|\lambda|^{1-p}},
  \quad \lambda\in\I\R\setminus\{0\},\]
  i.e., $S$ is almost bisectorial.
  The spectral projections for $S_n$ corresponding to the eigenvalues $n$
  and $-n$, respectively, are
  \[P_{n}^{+}=\pmat{1&n^p\\0&0},\qquad P_{n}^{-}=\pmat{0&-n^p\\0&1}.\]
  Consequently, 
  $P_\pm=\diag(P_1^\pm,P_2^\pm,\dots)$
  are unbounded and
  $S$ is not dichotomous, compare Remark~\ref{rem:genvec}.
\end{example}%
If in the above example we choose $p=0$, then $S$ becomes bisectorial
\emph{and} strictly dichotomous.
In general however, bisectoriality (with $0\in\varrho(S)$)
does not imply dichotomy.
A counterexample was given in \cite{mcintosh-yagi}
(see Example~\ref{ex:mcintosh-yagi}).

The identity \eqref{eq:Ppm-int-defS} for the projections $P_\pm$ 
from Theorem~\ref{theo:almbisect} can be rearranged to yield another 
integral representation.
In the bisectorial case, this representation has
been obtained and used extensively in \cite{LT2001,tretter-wyss}.

\begin{corollary}\label{cor:iR-int}
  Let $S(X\to X)$ be almost bisectorial.
  Then:
  \begin{enumerate}
  \item\label{enumi:iR-int-eq}
    For every $x\in\mD(S)$,
    \begin{equation}\label{eq:iR-int}
      2P_+x-x=P_+x-P_-x=\frac{1}{\pi\I}
      \int_{-\I\infty}^{\I\infty\,\prime} (S-\lambda)^{-1}x\,d\lambda;
    \end{equation}
    in particular, the integral exists for all $x\in\mD(S)$.
    Here the prime denotes the Cauchy principal value at infinity.
  \item
    If, in addition,
    $S$ is densely defined and there exists a dense subspace
    $D\subset\mD(S)$ such that
    \[\int_{-\I\infty}^{\I\infty\,\prime}
    (S-\lambda)^{-1}x\,d\lambda, \qquad x\in D,\]
    defines a bounded operator, then
    $P_+$ is bounded and hence $S$ is strictly dichotomous.
  \end{enumerate}
\end{corollary}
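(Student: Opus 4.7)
The plan is to derive (i) directly from the integral formulas $P_\pm x = \pm\frac{1}{2\pi\I}\int_{\pm h - \I\infty}^{\pm h + \I\infty}\frac{1}{\lambda}(S-\lambda)^{-1}Sx\,d\lambda$ for $x\in\mD(S)$ given by Theorem~\ref{theo:almbisect}\,\enumiref{enumi:PB}, and then to read off (ii) via Corollary~\ref{coroll:dichot}.

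For (i), I would add the two representations to obtain $P_+ x - P_- x$ as a sum of two upward line integrals along $\pm h + \I\R$. On $\mD(S)$ the resolvent identity $(S-\lambda)^{-1}S = I + \lambda(S-\lambda)^{-1}$ yields $\frac{1}{\lambda}(S-\lambda)^{-1}Sx = \frac{x}{\lambda} + (S-\lambda)^{-1}x$. For each finite truncation between $\pm h - \I R$ and $\pm h + \I R$ this split is legitimate by linearity, even though the two pieces cease to be absolutely integrable as $R\to\infty$. A direct parametrization $\lambda = \pm h + \I t$ shows $\int_{\pm h - \I R}^{\pm h + \I R}\frac{d\lambda}{\lambda}\to\pm\I\pi$, so after the prefactor $\frac{1}{2\pi\I}$ the two $\frac{x}{\lambda}$-contributions cancel in the sum.

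Next, I would shift each remaining principal-value integral of $(S-\lambda)^{-1}x$ along $\pm h + \I\R$ onto the imaginary axis. For each $R$, Cauchy's theorem applied to the rectangle with vertices $\pm h\pm\I R$ reduces the shift to controlling the horizontal caps at height $\pm R$; by the almost-bisectorial estimate \eqref{eq:almbisect} (extended to a parabolic neighbourhood of $\I\R$ via Lemma~\ref{lem:almbisect}) the integrand has norm $O(R^{-\beta})\|x\|$ on caps of length $h$, so the cap contributions vanish as $R\to\infty$. Both shifted integrals therefore equal $\int_{-\I\infty}^{\I\infty\,\prime}(S-\lambda)^{-1}x\,d\lambda$, which in particular exists as a principal value, and combining with the cancellation above yields $P_+x - P_-x = \frac{1}{\pi\I}\int_{-\I\infty}^{\I\infty\,\prime}(S-\lambda)^{-1}x\,d\lambda$. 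Since Theorem~\ref{theo:almbisect}\,\enumiref{enumi:PB} gives $\mD(S)\subset\mD(P_\pm)$, the identity $P_+ + P_- = I$ on $\mD(P_+)$ rewrites the left-hand side as $2P_+x - x$.

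For (ii), part (i) gives $P_+ x = \tfrac{1}{2}x + \tfrac{1}{2\pi\I}Tx$ for $x\in D$, where $T$ denotes the assumed bounded integral operator; hence $P_+|D$ is bounded. Because $S$ is densely defined, $D\subset\mD(S)\subset\mD(P_+)$ is dense in $X$, so Corollary~\ref{coroll:dichot} forces strict dichotomy of $S$. The main technical obstacle in (i) is the bookkeeping of the principal-value splitting: the pieces $\frac{x}{\lambda}$ and $(S-\lambda)^{-1}x$ are individually only PV-convergent along $\pm h + \I\R$, so the decomposition has to be performed at the level of finite truncations and then passed to the limit, rather than interpreting the two halves as independent integrals from the outset.
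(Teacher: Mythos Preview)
Your argument is correct and follows essentially the same route as the paper's proof: the resolvent identity split $\frac{1}{\lambda}(S-\lambda)^{-1}Sx=\frac{x}{\lambda}+(S-\lambda)^{-1}x$, an explicit evaluation of the $\frac{x}{\lambda}$ contribution, and a contour shift of the remaining principal-value integral to $\I\R$ using Cauchy's theorem and the almost-bisectorial decay; part~(ii) via Corollary~\ref{coroll:dichot} is identical. The only cosmetic difference is that the paper works with the formula for $P_+$ alone and invokes $P_++P_-=I$ afterwards, whereas you compute $P_+-P_-$ symmetrically from both line integrals so that the two $\frac{x}{\lambda}$ contributions cancel rather than produce $\tfrac12 x$.
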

\begin{proof}
  (i) 
  From \eqref{eq:Ppm-int-defS} we get for $x\in\mD(S)$,
  \begin{align*}
    P_+x&=\frac{1}{2\pi\I}\int_{h-\I\infty}^{h+\I\infty}
    \frac{1}{\lambda}(S-\lambda)^{-1}Sx\,d\lambda
    =\frac{1}{2\pi\I}\int_{h-\I\infty}^{h+\I\infty}
    \biggl(\frac{1}{\lambda}x+(S-\lambda)^{-1}x\biggr)\,d\lambda\\
    &=\frac{1}{2}x+
    \frac{1}{2\pi\I}\int_{h-\I\infty}^{h+\I\infty\,\prime}
    (S-\lambda)^{-1}x\,d\lambda
    =\frac{1}{2}x+
    \frac{1}{2\pi\I}\int_{-\I\infty}^{\I\infty\,\prime}
    (S-\lambda)^{-1}x\,d\lambda.
  \end{align*}
  Note that in the last step we used Cauchy's integral theorem and 
  \eqref{eq:almbisect}. The assertion follows from $x=P_+x+P_-x$ for 
  $x\in\mD(P_+)$.

  (ii)
  The assumption together with (i) implies that $P_+$ is bounded on
  the dense subspace $D$. 
  Corollary~\ref{coroll:dichot} yields the claim.
\end{proof}

\begin{remark}
  In \cite{LT2001} the representation \eqref{eq:iR-int} was derived 
  under the weaker condition 
  $\lim_{t\to\pm\infty}\|(S-\I t)^{-1}\|=0$
  but with the additional assumption that the integral in \eqref{eq:iR-int}
  exists for every $x\in X$.
  By the uniform boundedness principle, the projections $P_\pm$ are then bounded and $S$ is strictly dichotomous.
\end{remark}

\begin{remark}\label{rem:sectdichot}
An operator $S$ is called \emph{sectorially dichotomous} if it is 
dichotomous and $S|X_+$ and $-S|X_-$ are sectorial operators
with angle $\theta\leq\pi/2$.
Note that sectorial dichotomy implies bisectoriality
\cite[Lemma~2.12]{tretter-wyss}
as well as strict dichotomy.
A question asked in \cite{tretter-wyss} is the following:
Is every bisectorial and dichotomous
operator also sectorially dichotomous?
If we assume strict dichotomy, the answer is yes
since in this case $\pm S|G_\pm$ are sectorial by Theorem~\ref{theo:almbisect}
and strict dichotomy implies $G_\pm=X_\pm$.

The stronger assumption of strict dichotomy seems reasonable,
for otherwise the spaces $X_\pm$ are not unique.
Moreover the main theorems in
\cite{tretter-wyss} actually yield strictly dichotomous operators,
compare Remark~\ref{rem:theo-splitting}.
\end{remark}

\section{The subspaces $M_\pm$} 
\label{sec:Mpm}

Recall that the subspaces $G_\pm$ are $S$- and $(S-\lambda)^{-1}$-invariant (Theorem~\ref{theo:splitting}).
However, even if $S$ is densely defined, 
the restrictions of $S$ to  $G_\pm$ do not need to be densely defined,
see Example~\ref{ex:nodense}.
Let
\begin{equation}
  M_\pm:=\overline{\range(A_\pm)}
\end{equation}
and let $S_{M\pm}$ be the part of $S$ in $M_\pm$, i.e.\ $S$ is the restriction
of $S$ to $\mD(S_{M\pm})=\set{x\in\mD(S)\cap M_\pm}{Sx\in M_\pm}$.
The spaces $M_\pm$ have been introduced in \cite{BGK1986} where it is shown 
that $\mD(S_{M\pm})$ is dense in $M_\pm$ if $S$ is densely defined, cf. Lemma~\ref{lem:SMdense}.
In the following lemma we do not assume density of $\mD(S)$.
Note that in general the space $M_\pm$ is not invariant under $S$
(compare Theorem~\ref{theo:Mpm}), which is why we use the part of $S$ 
in $M_\pm$ instead of a simple restriction.
This is also reflected in the notational difference between $S_{M\pm}$
and $S|{G_\pm}$.

\begin{lemma}\label{lem:Mpm}
  Let $S(X\to X)$ be such that 
  \eqref{eq:resolvinc-h} and \eqref{eq:resolvbnd-h} hold. Then:
  \begin{enumerate}
  \item $M_\pm\subset G_\pm$.
  \item
    $M_\pm$ is $(S-\lambda)^{-1}$-invariant for $\lambda\in\varrho(S)$
    and
    $\mD(S_{M\pm})=S^{-1}(M_\pm)$.
  \item\label{enumi:Spm-resolv} 
    $M_\pm$ is $(S|G_\pm-\lambda)^{-1}$-invariant for 
    $\lambda\in\varrho(S|G_\pm)$,
    \begin{gather*}
      \sigma(S_{M\pm})=\sigma(S|G_\pm),\\
      (S_{M\pm}-\lambda)^{-1}x=(S|G_\pm-\lambda)^{-1}x,\quad x\in M_\pm,\,
      \lambda\in\varrho(S_{M\pm}).
    \end{gather*}
  \end{enumerate}
\end{lemma}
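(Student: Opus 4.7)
The plan is to build on the structure from the proof of Theorem~\ref{theo:splitting}: $G_\pm = \ker(A_\mp)$, $A_+A_- = A_-A_+ = 0$, and $A_\pm$ commutes with $(S-\lambda)^{-1}$ for every $\lambda \in \varrho(S)$. For (i), this immediately gives $\range(A_\pm) \subset \ker(A_\mp) = G_\pm$, so $M_\pm \subset G_\pm$ by the closedness of $G_\pm$. For (ii), the commutation \eqref{eq:Apm-comm3} yields $(S-\lambda)^{-1}(\range A_\pm) \subset \range A_\pm$ for $\lambda \in \varrho(S)$, and continuity of $(S-\lambda)^{-1}$ extends this to $M_\pm$; the case $\lambda=0$ gives $S^{-1}$-invariance. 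The equality $\mD(S_{M\pm}) = S^{-1}(M_\pm)$ is then immediate, the nontrivial direction being that any $x \in \mD(S)$ with $Sx \in M_\pm$ satisfies $x = S^{-1}(Sx) \in M_\pm$, making the membership $x \in M_\pm$ in the definition of $\mD(S_{M\pm})$ automatic.

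For part (iii), the crucial step is to propagate the invariance from $\varrho(S)$ to $\varrho(S|G_\pm) \supset \varrho(S) \cup \overline{\C_\mp}$. For $x \in M_\pm$, the map $f_x(\lambda) := (S|G_\pm - \lambda)^{-1}x$ is $G_\pm$-valued and analytic on $\varrho(S|G_\pm)$ and, by (ii), coincides on $\varrho(S)$ with $(S-\lambda)^{-1}x \in M_\pm$. For each continuous functional $\phi$ on $G_\pm$ vanishing on $M_\pm$, $\phi\circ f_x$ is holomorphic on $\varrho(S|G_\pm)$, vanishes on the open set $\varrho(S)$, and hence by the identity theorem vanishes on every connected component of $\varrho(S|G_\pm)$ that meets $\varrho(S)$. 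Writing $\varrho(S|G_\pm) = \C\setminus(\sigma(S)\cap\C_\pm)$, the unique unbounded component contains the strip $\{|\re\lambda|\leq h\}\subset\varrho(S)$, and any bounded component must lie inside $\{|\re\lambda|>h\}\cap\C_\pm$ (it cannot intersect the unbounded strip) and therefore in $\varrho(S)$; a Hahn-Banach argument then gives $f_x(\lambda)\in M_\pm$. Once this invariance is established, $R(\lambda) := (S|G_\pm - \lambda)^{-1}|M_\pm$ is bounded, maps into $\mD(S_{M\pm})$ (since $S R(\lambda)y = y + \lambda R(\lambda)y \in M_\pm$), and is a two-sided inverse of $S_{M\pm} - \lambda$, yielding the resolvent formula together with $\varrho(S|G_\pm) \subset \varrho(S_{M\pm})$.

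The main obstacle is the reverse spectrum inclusion $\sigma(S|G_\pm) \subset \sigma(S_{M\pm})$, as the spectrum of the part of an operator in a proper closed invariant subspace can strictly shrink in general. I would reduce to a bounded setting by passing to $T := (S|G_\pm)^{-1}$, so that $(S_{M\pm})^{-1} = T|M_\pm$ and $\sigma(S|G_\pm)$ corresponds to $\sigma(T)\setminus\{0\}$ via $\mu \mapsto \mu^{-1}$. The key mechanism is $M_\pm \supset \range(A_\pm|G_\pm) = \range T^2$, which allows one to push every candidate spectral point of $T$ into $M_\pm$: for $\lambda\ne 0$ in the approximate point spectrum with an approximate eigensequence $y_n \in G_\pm$ of norm $1$, the replacements $\hat y_n := \lambda^{-2}T^2 y_n \in M_\pm$ satisfy $\hat y_n - y_n \to 0$ (since $T^2 y_n - \lambda^2 y_n = T(Ty_n-\lambda y_n) + \lambda(Ty_n - \lambda y_n) \to 0$) and thus form an approximate eigensequence of $T|M_\pm$; residual points are handled dually, since any $\phi \in G_\pm^*$ vanishing on $M_\pm$ annihilates $\range T^2$, forcing $(T^*)^2\phi = 0$ and then $T^*\phi = \lambda\phi$ gives $\lambda^2 \phi = 0$, so $\lambda = 0$. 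Translating back via $\mu\mapsto \mu^{-1}$ yields the spectrum equality $\sigma(S|G_\pm) = \sigma(S_{M\pm})$, and the resolvent formula of (iii) then follows by the calculation in the previous paragraph.
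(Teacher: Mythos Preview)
Your proof is correct; parts (i) and (ii) coincide with the paper's. Part (iii) is handled differently in two respects.

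For the invariance of $M_\pm$ under $(S|G_\pm-\lambda)^{-1}$ on $\overline{\C_\mp}$, the paper simply invokes the explicit integral representation \eqref{eq:resolv-G+} from the proof of Theorem~\ref{theo:splitting}: the operator $R_-(z)$ is built from resolvents $(S-\mu)^{-1}$, hence commutes with $A_\pm$ and leaves $\overline{\range(A_\pm)}$ invariant. Your analytic-continuation plus Hahn--Banach argument is more self-contained but longer.

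For the spectrum equality, the paper takes a \emph{global} route: it proves $\varrho(S_{M+})\cap\varrho(S_{M-})\subset\varrho(S)$ by the same injectivity/surjectivity trick as in Lemma~\ref{lem:Apm}, using $A_++A_-=S^{-2}$ to split a potential kernel element of $S-\lambda$ into $M_+$ and $M_-$ parts and constructing a right inverse via $(S_{M+}-\lambda)^{-1}A_++(S_{M-}-\lambda)^{-1}A_-$. Combined with $\sigma(S)=\sigma(S|G_+)\,\dot\cup\,\sigma(S|G_-)$ and the already established inclusion $\sigma(S_{M\pm})\subset\sigma(S|G_\pm)$, this yields the equality. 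Your argument instead works on each $G_\pm$ in isolation, passing to the bounded operator $T=(S|G_\pm)^{-1}$ and using $\range T^2\subset M_\pm$ to transfer approximate eigensequences into $M_\pm$ and to exclude residual spectrum by a dual argument. The paper's method is shorter and recycles existing machinery; yours gives an intrinsic spectral-permanence argument that does not appeal to the companion subspace $M_\mp$ at all.
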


\begin{proof}
  (i) follows from $\range(A_\pm)\subset\ker(A_\mp)=G_\pm$ and the closedness of $G_\pm$.

  (ii) From $(S-\lambda)^{-1}A_\pm=A_\pm(S-\lambda)^{-1}$ it follows that
  $\range(A_\pm)$ and hence $M_\pm$ are 
  $(S-\lambda)^{-1}$-invariant. 
  In particular $S^{-1}(M_\pm)\subset M_\pm$, which implies
  $\mD(S_{M\pm})=S^{-1}(M_\pm)$.

  (iii)
  Let us prove the invariance of $M_+$.
  Recall that $\varrho(S|G_+)=\varrho(S)\cup\C_-$.
  For $\lambda\in\varrho(S)$ we have 
  $(S|G_+-\lambda)^{-1}=(S-\lambda)^{-1}|G_+$,
  so the invariance of $M_\pm$ follows from (ii) in this case.
  For $\lambda\in\C_-$, the invariance follows from \eqref{eq:resolv-G+}.
  The proof for $M_-$ is analogous.%

  The invariance property of $M_\pm$ immediately yields
  $\varrho(S|G_\pm)\subset\varrho(S_{M\pm})$ and
  $(S_{M\pm}-\lambda)^{-1}x=(S|G_\pm-\lambda)^{-1}x$ for $x\in M_\pm$ and
  $\lambda\in\varrho(S|G_\pm)$.
  Now
  \begin{equation*}
     \sigma(S)
     = \sigma(S|G_+) \,\dot{\cup}\, \sigma(S|G_-)
     \supset \sigma(S_{M+}) \,\dot{\cup}\, \sigma(S_{M-}).
  \end{equation*}
  So to prove $\sigma(S|G_\pm)\subset \sigma(S_{M\pm})$
  it suffices to show $\sigma(S)\subset \sigma(S_{M+}) \cup \sigma(S_{M-})$
  or, equivalently,
  $\varrho(S_{M+}) \cap \varrho(S_{M-})\subset\varrho(S)$.
  The proof is similar to the one for \eqref{eq:Apm-spec} in
  Lemma~\ref{lem:Apm}:
  If $\lambda\in\varrho(S_{M+})\cap\varrho(S_{M-})$ and $(S-\lambda)x=0$,
  then $x\in\mD(S^2)$ and hence
  \[x=y_++y_- \quad\text{with}\quad
  y_\pm= P_\pm x = A_\pm S^2x\in\range(A_\pm)\subset M_\pm.\]
  In fact $y_\pm\in\mD(S_{M\pm})$ since $x\in\mD(S^3)$ and 
  $Sy_\pm=A_\pm S^3x$.
  Consequently
  \[(S-\lambda)x=(S_{M+}-\lambda)y_++(S_{M-}-\lambda)y_-=0\]
  and thus $y_+=y_-=x=0$.
  The surjectivity of $S-\lambda$ is obtained by considering
  $T=(S_{M+}-\lambda)^{-1}A_++(S_{M-}-\lambda)^{-1}A_-$ and noting
  $(S-\lambda)S^2T=I$.
\end{proof}

The following result has been obtained as part of 
\cite[Theorem~3.1 and Lemma~3.3]{BGK1986}.
For the convenience of the reader we include the proof.

\begin{lemma}
   \label{lem:SMdense}
  If $S(X\to X)$ is densely defined and satisfies 
  \eqref{eq:resolvinc-h} and \eqref{eq:resolvbnd-h}, then
  $M_+\oplus M_-\subset X$ is dense and the operators
  $S_{M\pm}$ are densely defined.
\end{lemma}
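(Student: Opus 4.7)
The plan is to deduce both density statements by exploiting the identity $A_++A_-=S^{-2}$, together with the commutativity of $A_\pm$ with $S$, leaving the density of $\mathcal{D}(S^2)$ in $X$ as the only analytic input.

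First I would establish that $\mathcal{D}(S^2)\subset X$ is dense. Since $0\in\varrho(S)$ by \eqref{eq:resolvbnd-h} (take $\lambda=0$ in the strip), the inverse $S^{-1}\in L(X)$ is bounded. Given any $x\in\mathcal{D}(S)$, I approximate $Sx$ by a sequence $y_n\in\mathcal{D}(S)$ and set $x_n=S^{-1}y_n$; then $x_n\in\mathcal{D}(S^2)$ and $x_n\to S^{-1}Sx=x$ by continuity of $S^{-1}$. Since $\mathcal{D}(S)$ is dense, so is $\mathcal{D}(S^2)$. Next, for every $x\in\mathcal{D}(S^2)$ the identity $A_++A_-=S^{-2}$ (established in the proof of Theorem~\ref{theo:splitting}) yields the decomposition
\[
  x=S^{-2}(S^2x)=A_+(S^2x)+A_-(S^2x)\in\range(A_+)+\range(A_-)\subset M_++M_-,
\]
so $M_++M_-$ contains a dense subspace of $X$. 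By Lemma~\ref{lem:dichunique}\,\enumiref{enumi:Gintersection} we have $G_+\cap G_-=\{0\}$, and Lemma~\ref{lem:Mpm}\,(i) gives $M_\pm\subset G_\pm$, so the sum is direct. Hence $M_+\oplus M_-\subset X$ is dense.

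For the density of $\mathcal{D}(S_{M\pm})$ in $M_\pm$, recall from Lemma~\ref{lem:Mpm}\,(ii) that $\mathcal{D}(S_{M\pm})=S^{-1}(M_\pm)$, so it suffices to show that every $y\in\range(A_\pm)$ lies in the $X$-closure of $S^{-1}(M_\pm)$. Write $y=A_\pm z$ and pick $z_n\in\mathcal{D}(S)$ with $z_n\to z$. The commutation relation $A_\pm(S-\lambda)^{-1}=(S-\lambda)^{-1}A_\pm$ (Theorem~\ref{theo:splitting}) implies $SA_\pm\supset A_\pm S$, and in particular $A_\pm z_n\in\mathcal{D}(S)$ with
\[
  SA_\pm z_n=A_\pm Sz_n\in\range(A_\pm)\subset M_\pm.
\]
Therefore $A_\pm z_n=S^{-1}(A_\pm Sz_n)\in S^{-1}(M_\pm)=\mathcal{D}(S_{M\pm})$, and by boundedness of $A_\pm$ we have $A_\pm z_n\to A_\pm z=y$. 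Thus $\range(A_\pm)\subset\overline{\mathcal{D}(S_{M\pm})}$, whence $M_\pm=\overline{\range(A_\pm)}\subset\overline{\mathcal{D}(S_{M\pm})}$, as required.

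I do not foresee a real obstacle: the only non-formal ingredient is the density of $\mathcal{D}(S^2)$, which is standard once $0\in\varrho(S)$ is used, and the rest is a direct exploitation of $A_++A_-=S^{-2}$ and the commutation of $A_\pm$ with $S$. The slight subtlety to keep in mind is that $M_\pm$ need not be $S$-invariant (which is precisely why one uses $S_{M\pm}$ rather than $S|M_\pm$), but this plays no role for the argument above since the approximants $A_\pm z_n$ themselves witness that the relevant domain $S^{-1}(M_\pm)$ is nontrivial and dense in $M_\pm$.
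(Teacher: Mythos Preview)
Your proof is correct and follows essentially the same approach as the paper: both use $A_++A_-=S^{-2}$ to embed $\mD(S^2)$ into $\range(A_+)\oplus\range(A_-)$, and both show that $A_\pm(\mD(S))\subset\mD(S_{M\pm})$ is dense in $M_\pm$ via the boundedness of $A_\pm$. You spell out the density of $\mD(S^2)$ and the directness of the sum more explicitly than the paper does, but the underlying argument is the same.
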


\begin{proof} 
  From $S^{-2}=A_++A_-$ we obtain
  $\mD(S^2)\subset\range(A_+)\oplus\range(A_-)\subset M_+\oplus M_-$
  and thus the first assertion holds.
  For the second one note that
  $S^{-1}(\range (A_\pm))=A_\pm(\mD(S))$ is dense in $\range(A_\pm)$
  and hence in $M_\pm$,
  and that $S^{-1}(\range(A_\pm))\subset S^{-1}(M_\pm)=\mD(S_{M\pm})$.
\end{proof}

Despite the invariance of $M_\pm$  under $(S-\lambda)^{-1}$, 
and the invariance of $G_\pm$ under $S$ and $(S-\lambda)^{-1}$,
the subspaces $M_\pm$ are in general not invariant under $S$ itself; 
that is, the inclusion $S(\mD(S)\cap M_\pm)\subset M_\pm$ does not need to hold.
For densely defined operators $S$ the $S$-invariance of $M_\pm$
can be characterised as follows.

\begin{theorem}\label{theo:Mpm}
  Let $S(X\to X)$ be densely defined satisfying 
  \eqref{eq:resolvinc-h} and \eqref{eq:resolvbnd-h}. Then:
  \begin{enumerate} 
  \item The following equivalences hold:
    \begin{align*}
      &S|G_+ \text{ densely defined}
      \quad\Longleftrightarrow\quad M_+=G_+
      \quad\Longleftrightarrow\quad M_+ \text{ is $S$-invariant}, \\
      &S|G_- \text{ densely defined}
      \quad\Longleftrightarrow\quad M_-=G_-
      \quad\Longleftrightarrow\quad M_- \text{ is $S$-invariant}.
    \end{align*}
    In particular, $S_{M\pm}=S|G_\pm$ if $M_\pm=G_\pm$.
  \item If $P_+$ (or equivalently $P_-$) is bounded, then $M_\pm=G_\pm$.
  \item\label{enumi:Mpm_suff_Qn}
    Suppose there exist $Q_n\in L(X)$, $n\in\N$, such that\ \
    $\range(Q_n)\subset\mD(S)$,
    \begin{equation*}
      Q_n(S-\lambda)^{-1}=(S-\lambda)^{-1}Q_n\quad
      \quad\text{for all}\ \lambda\in\varrho(S)
    \end{equation*}
    and
    \begin{equation*}
      Q_nx\to x \quad\text{for all}\ x\in X.
    \end{equation*}
    Then $M_\pm=G_\pm$.
  \end{enumerate}
\end{theorem}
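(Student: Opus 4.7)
The plan is to establish (i) first and then derive (ii) and (iii) from it by producing, in each case, a dense subspace of $\mD(S|G_\pm)$ inside $G_\pm$. Within (i), the implications ``$M_+=G_+\Rightarrow M_+$ is $S$-invariant'' and ``$M_+=G_+\Rightarrow S|G_+$ is densely defined'' come essentially for free: the first is the $S$-invariance of $G_+$ from Theorem~\ref{theo:splitting}, while the second follows from Lemma~\ref{lem:SMdense} together with the observation that under $M_+=G_+$ the extra requirement $Sx\in M_+$ in the definition of $\mD(S_{M+})$ is automatic by the $S$-invariance of $G_+$, so $\mD(S_{M+})=\mD(S|G_+)$ and Lemma~\ref{lem:SMdense} already gives its density in $G_+$.

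The crucial ingredient for both reverse implications is the identity $A_+y=S^{-2}y$ valid for every $y\in G_+=\ker(A_-)$, which is immediate from $A_++A_-=S^{-2}$. This places $S^{-2}y$ inside $\range(A_+)\subset M_+$ for all $y\in G_+$. To deduce ``$M_+$ is $S$-invariant $\Rightarrow M_+=G_+$'', I pick $y\in G_+$, observe $S^{-2}y\in M_+\cap\mD(S^2)$, and apply the $S$-invariance of $M_+$ twice to conclude first $S^{-1}y\in M_+$ and then $y\in M_+$. For ``$S|G_+$ densely defined $\Rightarrow M_+=G_+$'', the inclusion $\sigma(S|G_+)\subset\C_+$ forces $0\in\varrho(S|G_+)$, and a standard bounded-inverse argument upgrades density of $\mD(S|G_+)$ to density of $\mD((S|G_+)^2)$ in $G_+$; using the $S$- and $S^{-1}$-invariance of $G_+$ one identifies $\mD((S|G_+)^2)=\mD(S^2)\cap G_+=S^{-2}(G_+)$, and the key identity yields $S^{-2}(G_+)\subset M_+$. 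Closedness of $M_+$ then gives $G_+\subset M_+$.

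For (ii), boundedness of $P_+$ combined with the commutation $P_+(S-\lambda)^{-1}=(S-\lambda)^{-1}P_+$ yields $P_+(\mD(S))\subset\mD(S)\cap G_+=\mD(S|G_+)$; since $\mD(S)$ is dense and $P_+$ is continuous with $\range(P_+)=G_+$, this makes $\mD(S|G_+)$ dense in $G_+$, and (i) applies. For (iii), the same commutation idea applied to the defining integrals \eqref{eq:Apmdef} gives $Q_nA_\pm=A_\pm Q_n$; hence for $y\in G_+=\ker(A_-)$ one has $Q_ny\in\ker(A_-)\cap\range(Q_n)\subset G_+\cap\mD(S)=\mD(S|G_+)$, and $Q_ny\to y$ supplies the required density.

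The main obstacle I anticipate is the reverse implication in (i): the chain $\mD((S|G_+)^2)=\mD(S^2)\cap G_+=S^{-2}(G_+)\subset M_+$ is short but rests on several small invariance and density arguments that must be assembled in the right order, and one must be careful to work with density of $\mD((S|G_+)^2)$ in $G_+$ rather than of $\mD(S|G_+)$, since closing up inside $M_+$ needs a dense subset of $G_+$ already sitting in $M_+$.
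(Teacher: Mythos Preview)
Your proof is correct and, for parts (i) and (iii), essentially identical to the paper's: both hinge on the inclusion
\[
  \mD(S^2)\cap G_+=S^{-2}(G_+)=\range(A_+)\subset M_+
\]
(which is your identity $A_+y=S^{-2}y$ for $y\in\ker(A_-)$), and then argue density or $S$-invariance exactly as you describe.

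For (ii) you take a different route. The paper argues that $P_\pm$ bounded gives $X=G_+\oplus G_-$, whence $M_+\oplus M_-$ is closed (being a direct sum of closed subspaces inside a topological direct sum); since $M_+\oplus M_-$ is also dense by Lemma~\ref{lem:SMdense}, one gets $M_+\oplus M_-=X$ and thus $M_\pm=G_\pm$ directly. You instead use the commutation of the bounded $P_+$ with $S^{-1}$ to show $P_+(\mD(S))\subset\mD(S)\cap G_+=\mD(S|G_+)$ is dense in $G_+$, and then invoke (i). Both arguments are short and valid; the paper's is slightly more self-contained (it avoids re-entering (i)), while yours has the virtue of reducing all three parts to the single criterion ``$S|G_+$ densely defined'' established in~(i).
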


\begin{proof}
  (i)
  From $G_+=\range(P_+)$ and $P_+=S^2A_+$ we obtain 
  \begin{equation}\label{eq:G+M+incl}
     \mD(S^2)\cap G_+ = S^{-2}(G_+) = \range(A_+)\subset M_+.
  \end{equation}
  Suppose first that $S|G_+$ is densely defined.
  Since $0\in\varrho(S|G_+)$, $(S|G_+)^2$ is densely defined
  too, i.e.\ $\mD(S^2)\cap G_+\subset G_+$ is dense.
  Taking closures in \eqref{eq:G+M+incl}, we thus obtain $G_+\subset M_+$
  and hence $G_+=M_+$.
  Now let $M_+$ be $S$-invariant. From \eqref{eq:G+M+incl} we see
  $S^{-2}(G_+)\subset \mD(S^2)\cap M_+$,
  hence, by the $S$-invariance of $M_+$, we obtain
  $G_+\subset M_+$, i.e.\ $G_+=M_+$. 
  The other implications are trivial and the case of $M_-$, $G_-$ is analogous.

  (ii)
  If $P_\pm$ are bounded, then $X=G_+\oplus G_-$.
  Since $M_\pm$ are closed, it follows that $M_+\oplus M_-$ is closed.
  By Lemma~\ref{lem:SMdense} it is dense in $X$, so we obtain
  $M_+\oplus M_-=X$ and hence $M_\pm=G_\pm$.

  (iii)
  Since $Q_n$ commutes with the resolvent, we have
  $Q_nA_\pm=A_\pm Q_n$ and consequently
  $Q_nx \in \ker(A_-) = G_+$ for any $x\in G_+=\ker(A_-)$.
  Since additionally $Q_nx\in\mD(S)$ and $Q_nx\to x$, we obtain that 
  $G_+\cap\mD(S)$ is dense in $G_+$ and thus $M_+=G_+$ by (i). 
  The proof of $M_-=G_-$ is analogous.
\end{proof}

\begin{remark}\label{rem:Mpm-ortheig}
  The conditions of Theorem~\ref{theo:Mpm}\eqref{enumi:Mpm_suff_Qn}
  hold for example
  if $X$ is a Hilbert space which
  can be decomposed into an orthogonal direct sum of 
  finite-dimensional subspaces $X_k$ where each $X_k$ is spanned by
  a set of eigenvectors of $S$.
  Then $Q_n$ can be chosen as the orthogonal projection onto the subspace
  $X_1\oplus\dots\oplus X_n$.
  Block diagonal operators as in Example~\ref{ex:unbproj} admit such
  orthogonal decompositions.
\end{remark}

Although for a general densely defined operator $S$ its restrictions $S|G_\pm$ may fail to be densely defined, they always are if $S$ is bisectorial with $0\in\varrho(S)$.
\begin{lemma}\label{lem:bisect-Spm}
  Let $S(X\to X)$ be densely defined and bisectorial with $0\in\varrho(S)$.
  Then $S|G_\pm$ are densely defined and $M_\pm=G_\pm$.
\end{lemma}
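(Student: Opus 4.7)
The strategy is to apply Theorem~\ref{theo:Mpm}\,\enumiref{enumi:Mpm_suff_Qn}, which reduces the identity $M_\pm = G_\pm$ to the construction of a uniformly bounded family $(Q_n)_{n \in \N} \subset L(X)$ with $\range(Q_n) \subset \mD(S)$, commuting with every resolvent of $S$, and satisfying $Q_n x \to x$ for every $x \in X$. Once $M_\pm = G_\pm$ is established, the density of $\mD(S|G_\pm)$ in $G_\pm$ follows from the equivalence in Theorem~\ref{theo:Mpm}(i). Before invoking that theorem, I would first note by Remark~\ref{rem:bisect}\,\enumiref{enumi:bisect-impl} that a bisectorial $S$ with $0 \in \varrho(S)$ is almost bisectorial, hence satisfies the standing assumptions \eqref{eq:resolvinc-h} and \eqref{eq:resolvbnd-h}.

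For the construction of $Q_n$, the natural choice is
\[
  Q_n := n^2 (S - \I n)^{-1}(S + \I n)^{-1} = n^2 (S^2 + n^2)^{-1},
  \qquad n \in \N,
\]
which is well defined because $\I\R \subset \varrho(S)$. Since $Q_n$ is built from resolvents of $S$, it automatically commutes with $(S-\lambda)^{-1}$ for every $\lambda \in \varrho(S)$, and $\range(Q_n) \subset \mD(S^2) \subset \mD(S)$. The partial fraction identity
\[
  Q_n = \frac{n}{2\I}\bigl((S-\I n)^{-1} - (S+\I n)^{-1}\bigr),
\]
combined with the bisectorial bound $\|(S \pm \I n)^{-1}\| \leq M/n$, gives the uniform estimate $\|Q_n\| \leq M$.

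For the strong convergence I would first verify $Q_n x \to x$ on the dense subspace $\mD(S)$. For $x \in \mD(S)$, the commutation of $S$ with $(S^2+n^2)^{-1}$ yields
\[
  Q_n x - x = -S^2(S^2+n^2)^{-1} x = -S(S^2+n^2)^{-1} Sx,
\]
and a second partial fraction identity,
$S(S^2+n^2)^{-1} = \tfrac{1}{2}\bigl((S-\I n)^{-1} + (S+\I n)^{-1}\bigr)$,
gives $\|S(S^2+n^2)^{-1}\| \leq M/n$, so that $\|Q_n x - x\| \leq (M/n)\|Sx\| \to 0$. Density of $\mD(S)$ in $X$ together with the uniform bound $\|Q_n\| \leq M$ then extends strong convergence to every $x \in X$.

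This verifies the hypotheses of Theorem~\ref{theo:Mpm}\,\enumiref{enumi:Mpm_suff_Qn}, from which $M_\pm = G_\pm$ follows, and Theorem~\ref{theo:Mpm}(i) then gives that $S|G_\pm$ are densely defined. The only conceptual step is recognising that, in the bisectorial case, the resolvent bound at the points $\pm\I n$ makes $n^2(S^2+n^2)^{-1}$ the natural approximate identity; I do not anticipate any serious obstacle.
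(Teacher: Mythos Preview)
Your proof is correct and follows essentially the same idea as the paper: build an approximate identity out of resolvents of $S$ and feed it into Theorem~\ref{theo:Mpm}. The paper takes the slightly simpler choice $Q_t=\I t(\I t-S)^{-1}$ (one resolvent instead of two) and, citing \cite[Proposition~2.1.1]{haase} for the strong convergence, argues directly that $(\I t-S)^{-1}x\in G_\pm\cap\mD(S)$ for $x\in G_\pm$, obtaining density of $S|G_\pm$ first and then $M_\pm=G_\pm$ via Theorem~\ref{theo:Mpm}(i). Your route through Theorem~\ref{theo:Mpm}\,\enumiref{enumi:Mpm_suff_Qn} with $Q_n=n^2(S^2+n^2)^{-1}$ is a bit longer but has the virtue of being self-contained: you prove the strong convergence by hand via partial fractions rather than quoting an external result.
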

\begin{proof}
  Since $S$ is densely defined and $\|\I t(\I t-S)^{-1}\|$ is bounded for 
  $t\in\R$, we have
  \[x=\lim_{t\to\infty}\I t(\I t-S)^{-1}x, \qquad x\in X,
  \]
  see \cite[Proposition~2.1.1]{haase}.
  Moreover, $(\I t-S)^{-1}x\in G_\pm\cap\mD(S)$ for $x\in G_\pm$, therefore
  $S|G_\pm$ are densely defined and hence $M_\pm=G_\pm$ by Theorem~\ref{theo:Mpm}.
\end{proof}

\begin{remark}
  If $X$ is reflexive, 
  then every sectorial (or bisectorial) operator is automatically 
  densely defined,
  see e.g.\ \cite[Proposition~2.1.1]{haase}.
  From Theorem~\ref{theo:almbisect} we already know that 
  if $S$ is bisectorial, then the restrictions $\pm S|G_\pm$
  are sectorial; so they are densely defined if $X$ is reflexive.
  The previous lemma ensures that this is true also in 
  non-reflexive spaces.
\end{remark}

Finally we show that the spaces $M_\pm$ can be expressed in terms of the operators $B_\pm$ from \eqref{eq:Bpmdef} if $S$ is densely defined and almost bisectorial.
\begin{lemma}
  Let $S(X\to X)$ be densely defined and almost bisectorial.
  Let $A_\pm$ and $B_\pm$
  be the operators defined in \eqref{eq:Apmdef} and \eqref{eq:Bpmdef}.
  Then
  \[M_\pm=\overline{\range(A_\pm)}=\overline{\range(B_\pm)}.\]
\end{lemma}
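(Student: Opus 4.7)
The plan is to prove the two inclusions $\overline{\range(A_\pm)}\subset\overline{\range(B_\pm)}$ and $\overline{\range(B_\pm)}\subset\overline{\range(A_\pm)}$ separately, using the algebraic identity $A_\pm=B_\pm S^{-1}=S^{-1}B_\pm$ already derived in the proof of Theorem~\ref{theo:almbisect} together with the density of $\mD(S)$ in $X$.

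For the first inclusion I would argue very directly: since $A_\pm=B_\pm S^{-1}$, every $y\in\range(A_\pm)$ has the form $y=A_\pm x=B_\pm(S^{-1}x)$, which exhibits $y$ as an element of $\range(B_\pm)$. Hence $\range(A_\pm)\subset\range(B_\pm)$, and passing to closures gives $M_\pm=\overline{\range(A_\pm)}\subset\overline{\range(B_\pm)}$.

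For the reverse inclusion the idea is to approximate $B_\pm x$ by elements of $\range(A_\pm)$. Since $\range(A_\pm)=\range(S^{-1}B_\pm)\subset\mD(S)$ and $P_\pm=SB_\pm$, we have $SA_\pm=B_\pm$ on all of $X$, and moreover $A_\pm S=SA_\pm$ on $\mD(S)$ (an immediate consequence of $A_\pm S^{-1}=S^{-1}A_\pm$, which comes from $A_\pm\in L(X)$ commuting with the resolvent). Given $x\in X$, I would pick, using density of $\mD(S)$, a sequence $x_n\in\mD(S)$ with $x_n\to x$. Then
\[
A_\pm(Sx_n)=SA_\pm x_n=B_\pm x_n\longrightarrow B_\pm x\qquad(n\to\infty),
\]
the convergence being ensured by the boundedness of $B_\pm$. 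The left-hand side lies in $\range(A_\pm)$, so $B_\pm x\in\overline{\range(A_\pm)}=M_\pm$. Therefore $\range(B_\pm)\subset M_\pm$, and again closing gives $\overline{\range(B_\pm)}\subset M_\pm$.

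I do not expect a genuine obstacle here: the whole argument rests on the factorisations $A_\pm=B_\pm S^{-1}=S^{-1}B_\pm$ (already at hand) plus density of $\mD(S)$ (an explicit hypothesis). The only point requiring a moment of care is to verify that the commutation $A_\pm S=SA_\pm$ holds on $\mD(S)$ and that $SA_\pm=B_\pm$ on all of $X$, both of which follow from $A_\pm=S^{-1}B_\pm$ and $\range(S^{-1})=\mD(S)$.
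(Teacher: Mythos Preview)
Your proof is correct and follows essentially the same approach as the paper: both rest on the factorisation $A_\pm=B_\pm S^{-1}$ together with the density of $\mD(S)$ and boundedness of $B_\pm$. The paper compresses your two inclusions into the single chain $\overline{\range(A_\pm)}=\overline{\range(B_\pm S^{-1})}=\overline{\range(B_\pm|\mD(S))}=\overline{\range(B_\pm)}$, observing directly that $\range(A_\pm)=B_\pm(\mD(S))$ since $S^{-1}$ is a bijection onto $\mD(S)$; your detour through $A_\pm S=SA_\pm$ is unnecessary (one can write $A_\pm Sx_n=B_\pm S^{-1}Sx_n=B_\pm x_n$ immediately) but harmless.
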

\begin{proof}
  Since $B_\pm$ is bounded and $\mD(S)$ is dense,
  \begin{equation*}
     \overline{\range(A_\pm)} = \overline{\range(B_\pm S^{-1})}
     = \overline{\range(B_\pm|\mD(S))} = \overline{\range(B_\pm)}.
     \qedhere
  \end{equation*}
\end{proof}

\section{Perturbation results} 
\label{sec:pert} 

Our first perturbation result generalises 
\cite[Theorem~5.1]{BGK1986} where the stronger assumptions
$\eps=1$, $\mD(T^2)\subset\mD(S^2)$ and exponential dichotomy of $S$ were required.

\begin{theorem}\label{theo:dichot-pert}
  Let $S(X\to X)$ be a densely defined and
  strictly dichotomous operator on the Banach space $X$.
  Suppose that $T(X\to X)$ is densely defined
  and that there exist $h>0$, $\eps>0$
  such that the following conditions hold:
  \begin{enumerate}
  \item\label{enumi:dichot-pert:ri}
    $\set{\lambda\in\C}{|\re\lambda|\leq h}\subset\varrho(S)\cap\varrho(T)$;
  \item\label{enumi:dichot-pert:es}
    $\sup_{|\re\lambda|\leq h}|\lambda|^{1+\eps}
    \|(S-\lambda)^{-1}-(T-\lambda)^{-1}\|<\infty$;
  \item\label{enumi:dichot-pert:de}
    $\mD(S^2)\cap\mD(T^2)\subset X$ dense.
  \end{enumerate}
  Then $T$ is strictly dichotomous too.
\end{theorem}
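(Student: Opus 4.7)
The plan is to invoke Corollary~\ref{coroll:dichot} applied to $T$. This requires verifying that $T$ satisfies the standing hypotheses \eqref{eq:resolvinc-h} and \eqref{eq:resolvbnd-h} of Section~\ref{sec:splitting}, and that the integral representation \eqref{eq:Pint} for $T$ is bounded on some dense subspace of $\mD(T^{2})$.

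The resolvent bound for $T$ on the strip $|\re\lambda|\leq h$ follows from writing $(T-\lambda)^{-1}=(S-\lambda)^{-1}+\bigl((T-\lambda)^{-1}-(S-\lambda)^{-1}\bigr)$ and combining the uniform bound for $(S-\lambda)^{-1}$ furnished by Lemma~\ref{lem:blockop}\,\enumiref{item:blockop:ii} with assumption \enumiref{enumi:dichot-pert:es}, which forces the difference to decay like $|\lambda|^{-1-\eps}$ at infinity and, together with continuity on compact subsets of the strip, to be uniformly bounded there.

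For the boundedness of the projection formula, the key observation is that for $x\in\mD(S^{2})\cap\mD(T^{2})$ the identity $(A-\lambda)^{-1}A^{2}x=Ax+\lambda x+\lambda^{2}(A-\lambda)^{-1}x$, applied with $A=S$ and $A=T$, yields
\[
\frac{1}{\lambda^{2}}\bigl((T-\lambda)^{-1}T^{2}-(S-\lambda)^{-1}S^{2}\bigr)x
=\frac{(T-S)x}{\lambda^{2}}+\bigl((T-\lambda)^{-1}-(S-\lambda)^{-1}\bigr)x.
\]
Integrating along the line from $h-\I\infty$ to $h+\I\infty$, the first summand vanishes by Cauchy's theorem, obtained by closing the contour to the right in $\C_{+}$, where $\lambda^{-2}$ is holomorphic and the large-arc contribution tends to zero. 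The second summand converges absolutely in operator norm, by \enumiref{enumi:dichot-pert:es}, to a bounded operator $R\in L(X)$. Hence on $\mD(S^{2})\cap\mD(T^{2})$,
\[
P_{T}x=P_{S}x+Rx,
\]
where $P_{S}$ is the bounded projection from Theorem~\ref{theo:splitting} associated with $S$, bounded on all of $X$ by Corollary~\ref{coroll:dichot0} because $S$ is strictly dichotomous. Consequently $P_{T}$ is bounded on the dense subspace $\mD(S^{2})\cap\mD(T^{2})$ of $X$, and Corollary~\ref{coroll:dichot}, applied to $T$, yields that $T$ is strictly dichotomous.

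The only genuine subtlety is that $\mD(S)$ and $\mD(T)$ need not coincide, so the difference $(T-S)x$ is only defined when $x$ lies in both domains; this is precisely why assumption \enumiref{enumi:dichot-pert:de} is formulated via $\mD(S^{2})\cap\mD(T^{2})$. The improvement over \cite[Theorem~5.1]{BGK1986}, where $\mD(T^{2})\subset\mD(S^{2})$ is required, is made possible by the fact that Corollary~\ref{coroll:dichot} only demands boundedness of $P_{T}$ on \emph{some} dense subspace; this flexibility rests in turn on $P_{T}$ being closed (Theorem~\ref{theo:splitting}), which is the main structural gain of our approach.
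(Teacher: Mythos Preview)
Your proof is correct and follows essentially the same route as the paper's own argument: both expand $\lambda^{-2}(A-\lambda)^{-1}A^{2}x$ as $\lambda^{-2}Ax+\lambda^{-1}x+(A-\lambda)^{-1}x$ for $A=S,T$, subtract, observe that the $\lambda^{-1}$-terms cancel and the $\lambda^{-2}(T-S)x$-term integrates to zero, and then use \enumiref{enumi:dichot-pert:es} to bound the remaining resolvent difference, concluding via Corollary~\ref{coroll:dichot}. Your justification that the $\lambda^{-2}$-integral vanishes by closing the contour to the right is slightly more explicit than the paper, which simply drops the term without comment.
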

\begin{proof}
  Since $S$ is strictly dichotomous, 
  $(S-\lambda)^{-1}$ is uniformly bounded
  for $|\re\lambda|\leq h'$ by \eqref{eq:resolvbnd-h0} for some $0<h'\leq h$.
  Condition \eqref{enumi:dichot-pert:es} implies
  that $(T-\lambda)^{-1}$ is also uniformly bounded for $|\re\lambda|\leq h'$.
  Let $P_+^S$, $P_+^T$ be the projections corresponding to the spectrum
  in $\C_+$ of $S$ and $T$, respectively.
  Using \eqref{eq:Ppmint},
  \[\frac{1}{\lambda^2}(S-\lambda)^{-1}S^2
  =\frac{1}{\lambda^2}S+\frac{1}{\lambda}(S-\lambda)^{-1}S
  =\frac{1}{\lambda^2}S+\frac{1}{\lambda}+(S-\lambda)^{-1},\]
  and the respective identity for $T$, we obtain for
  $x\in\mD(S^2)\cap\mD(T^2)$,
  \begin{align*}
    P_+^Sx-P_+^Tx
    &=\frac{1}{2\pi\I}\int_{h'-\I\infty}^{h'+\I\infty}
    \!\biggl(\frac{1}{\lambda^2}(Sx-Tx)
    +\bigl((S-\lambda)^{-1}x-(T-\lambda)^{-1}x\bigr)\biggr)\,d\lambda\\
    &=\frac{1}{2\pi\I}\int_{h'-\I\infty}^{h'+\I\infty}
    \bigl((S-\lambda)^{-1}x-(T-\lambda)^{-1}x\bigr)\,d\lambda.
  \end{align*}
  By \eqref{enumi:dichot-pert:es} the last integral defines a bounded linear operator.
  Since $P_+^S$ is bounded, $P_+^T$ is bounded on the dense subspace $\mD(S^2)\cap\mD(T^2)$, and therefore $T$ is strictly dichotomous by Corollary~\ref{coroll:dichot}.
\end{proof}

\begin{remark}
  In \cite[Theorem~5.1]{BGK1986} it has been shown that,
  if $S$ is exponentially dichotomous
  then so is $T$.
  This implication remains true in our more general setting,
  where we require \eqref{enumi:dichot-pert:es} for some $\eps>0$
  instead of $\eps=1$, and \eqref{enumi:dichot-pert:de} instead
  of $\mD(T^2)\subset\mD(S^2)$.
  The proof of the exponential dichotomy of $T$ is largely identical to the
  one in \cite{BGK1986}.
\end{remark}

If the operator $S$ is almost bisectorial,
condition \eqref{enumi:dichot-pert:de}
of Theorem~\ref{theo:dichot-pert} can be relaxed:

\begin{theorem}\label{theo:dichot-pert-beta}
  Let $S(X\to X)$ be densely defined, almost bisectorial
  and strictly dichotomous.
  Let $T(X\to X)$ be a densely defined operator
  and $\eps>0$
  such that the following conditions hold:
  \begin{enumerate}
  \item\label{enumi:dichot-pert-beta:ri}
    $\I\R\subset\varrho(T)$;
  \item\label{enumi:dichot-pert-beta:es}
    $\sup_{\lambda\in\I\R}|\lambda|^{1+\eps}
    \|(S-\lambda)^{-1}-(T-\lambda)^{-1}\|<\infty$;
  \item\label{enumi:dichot-pert-beta:de}
    $\mD(S)\cap\mD(T)$ is dense in $X$.
  \end{enumerate}
  Then $T$ is also strictly dichotomous and almost bisectorial
  with the same exponent $\beta$ in the resolvent estimate
  \eqref{eq:almbisect}.
  In particular, if $S$ is bisectorial, then so is $T$.
\end{theorem}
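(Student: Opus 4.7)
The plan is to mirror the proof of Theorem~\ref{theo:dichot-pert}, but with the $\mD(S^2)$-based integral formula \eqref{eq:Ppmint} replaced by the $\mD(S)$-based representation from Corollary~\ref{cor:iR-int}; this substitution is available precisely because $S$ is almost bisectorial, and it is what allows the weaker density hypothesis (iii). First I would verify that $T$ itself is almost bisectorial with the same exponent $\beta$: for $\lambda\in\I\R\setminus\{0\}$, the triangle inequality together with (ii) and the estimate \eqref{eq:almbisect} for $S$ yields
\[
\|(T-\lambda)^{-1}\|\le\frac{M}{|\lambda|^{\beta}}+\frac{C}{|\lambda|^{1+\eps}}.
\]
For $|\lambda|\ge 1$ the right hand side is of order $|\lambda|^{-\beta}$; for $0<|\lambda|\le 1$ hypothesis (i) gives $0\in\varrho(T)$, so by continuity $(T-\lambda)^{-1}$ is bounded on the compact set $\I\R\cap\{|\lambda|\le 1\}$ and hence dominated by a multiple of $|\lambda|^{-\beta}$ there as well. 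Thus $T$ is almost bisectorial with the same exponent; the identical computation with $\beta=1$ handles the bisectorial case. By Remark~\ref{rem:bisect}\,(iv) the operator $T$ now satisfies \eqref{eq:resolvinc-h} and \eqref{eq:resolvbnd-h}, so Corollary~\ref{coroll:dichot} is applicable to $T$.

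Next I would apply Corollary~\ref{cor:iR-int}\,(i) separately to $S$ and $T$ and subtract: for every $x\in\mD(S)\cap\mD(T)$,
\[
P_+^T x = P_+^S x-\frac{1}{2\pi\I}\int_{-\I\infty}^{\I\infty\,\prime}\!\bigl((S-\lambda)^{-1}-(T-\lambda)^{-1}\bigr)x\,d\lambda.
\]
By (ii) the integrand has norm $\le C|\lambda|^{-1-\eps}$ for large $|\lambda|$, while near $\lambda=0$ both resolvents are bounded since $0\in\varrho(S)\cap\varrho(T)$; consequently the integral converges absolutely in the operator norm and defines a bounded operator on $X$ (no principal value is needed for the difference). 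Since $P_+^S$ is bounded by the strict dichotomy of $S$, the displayed identity shows that $P_+^T$ is bounded on the dense subspace $\mD(S)\cap\mD(T)$ provided by (iii). Corollary~\ref{coroll:dichot} then yields that $T$ is strictly dichotomous.

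The only nontrivial point to verify will be the passage from the two conditionally convergent principal-value integrals (one for $S$, one for $T$) to a single absolutely convergent integral of their difference. This is best handled by using symmetric truncations $\int_{-\I R}^{\I R}$: each truncated integral is an ordinary Bochner integral, the difference of the truncations is uniformly controlled by (ii), and letting $R\to\infty$ shows that the two principal values combine into the absolutely convergent integral written above. Everything else is essentially structural, and the key conceptual point is that Corollary~\ref{cor:iR-int} upgrades the $\mD(S^2)$-level density required by Theorem~\ref{theo:dichot-pert} to the weaker $\mD(S)$-level density (iii) whenever the unperturbed operator is almost bisectorial.
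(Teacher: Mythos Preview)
Your proof is correct and follows essentially the same route as the paper's: both first deduce the almost bisectoriality of $T$ from (ii), then use Corollary~\ref{cor:iR-int} to reduce strict dichotomy of $T$ to the boundedness of the principal-value integral on $\mD(S)\cap\mD(T)$, which is obtained by splitting into the bounded integral for $S$ and the absolutely convergent difference integral controlled by (ii). The paper invokes Corollary~\ref{cor:iR-int}\,(ii) directly rather than writing out the identity for $P_+^T x$ and then citing Corollary~\ref{coroll:dichot}, but this is only a cosmetic difference.
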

\begin{proof}
  Condition \eqref{enumi:dichot-pert-beta:es} immediately implies that
  $T$ satisfies an estimate \eqref{eq:almbisect} with the same $\beta$
  as for $S$. 
  By Corollary~\ref{cor:iR-int} it suffices to show that
  \[\int_{-\I\infty}^{\I\infty\,\prime}
  (T-\lambda)^{-1}x\,d\lambda\]
  defines a bounded linear operator on $\mD(S)\cap\mD(T)$.
  Since $S$ is strictly dichotomous,
  the corresponding integral for $S$ is bounded on $\mD(S)$
  by Corollary~\ref{cor:iR-int}\,\eqref{enumi:iR-int-eq}.
  On the other hand, the difference of both integrals
  \[\int_{-\I\infty}^{\I\infty\,\prime}
  \bigl((S-\lambda)^{-1}-(T-\lambda)^{-1}\bigr)\,d\lambda\]
  converges in the uniform operator topology
  by \eqref{enumi:dichot-pert-beta:es}
  and thus defines a bounded linear operator.
\end{proof}

\begin{remark}
  It is especially the relatively weak condition \eqref{enumi:dichot-pert:de}
  which makes Theorem~\ref{theo:dichot-pert} 
  and~\ref{theo:dichot-pert-beta}
  more generally applicable than comparable theorems from
  \cite{BGK1986,LT2001,tretter-wyss} where 
  $\mD(T^2)\subset\mD(S^2)$ or $\mD(T)=\mD(S)$ was assumed.
  A situation where this generality is needed is the Hamiltonian operator
  matrix defined via extrapolation spaces in Example~\ref{ex:ham};
  in particular  $\mD(T)\neq\mD(S)$ there.
\end{remark}

We give an example showing that the unusual condition $\mD(T^2)\subset\mD(S^2)$
from \cite{BGK1986}
may fail even if $\mD(S)=\mD(T)$.

\begin{example}
   Let $S$ be an unbounded selfadjoint operator with strictly positive pure
   point spectrum, for instance the multiplication operator
   \begin{align*}
      S(l^2\to l^2),
      \qquad
      S(x_n)_{n\in\N} = (nx_n)_{n\in\N}
   \end{align*}
   with domain $\mD(S) = \{ (x_n)_{n\in\N} \in l^2 : (nx_n)_{n\in\N} \in l^2 \}
   \subsetneqq l^2$. 
   We take any $w\in l^2\setminus \mD(S)$ and define the bounded
   operator $R$ on $l^2$ by $Rx = P_w x$ where $P_w$ is the orthogonal
   projection onto $w$.  
   Set $T := S+R$.  Then $\mD(T) = \mD(S)$
   and $T$ is selfadjoint.
   On the other hand,
   \begin{align*}
      x \in \mD(S^2) \cap \mD(T^2)
      \; 
      &\implies\;
      x \in \mD(S^2)\, \wedge\, Tx = Sx + Rx \in \mD(T) = \mD(S)
      \\
      &\implies\;
      x \in \mD(S^2)\, \wedge\, Rx \in \mD(S).
   \end{align*}
   Since $\range (R) \cap \mD(S) = \{0\}$ by construction, it follows that
   $Rx=0$, hence $x\in \linspan\{w\}^\perp$.  
   Consequently
   $\mD(S^2)\cap \mD(T^2)\subset\linspan\{w\}^\perp$
   and so $\mD(S^2)\cap \mD(T^2)$ cannot be dense in $l^2$.
   Therefore
   \[\mD(S^2)\subsetneqq\mD(S^2)\cap \mD(T^2)
   \qquad 
   \text{and}
   \qquad 
   \mD(T^2)\subsetneqq\mD(S^2)\cap \mD(T^2)\]
   since $S^2$ and $T^2$ are densely defined,
   and we obtain $\mD(T^2)\not\subset\mD(S^2)$ as well as
   $\mD(S^2)\not\subset\mD(T^2)$.
   Note that $S$ and $T$ satisfy all conditions of 
   Theorem~\ref{theo:dichot-pert-beta}, but not
   \eqref{enumi:dichot-pert:de} from Theorem~\ref{theo:dichot-pert}.
\end{example}

One situation, where  condition 
\eqref{enumi:dichot-pert-beta:es} in Theorem~\ref{theo:dichot-pert-beta}
is fulfilled, is the case of
so-called $p$-subordinate perturbations.
The $p$-subordinate perturbations of bisectorial operators,
in particular the change of their spectrum,
have been studied in
\cite{tretter-wyss}.
\begin{definition}
  Let $S(X\to X)$, $R(X\to X)$ be linear operators.
  The operator $R$ is called \emph{$p$-subordinate} to $S$ with $0\leq p\leq1$ if
  $\mD(S)\subset \mD(R)$ and there exists $c>0$ such that
  \[\|Rx\|\leq c\|x\|^{1-p}\|Sx\|^p,\quad x\in\mD(S).\]
\end{definition}
\begin{corollary}
  Let $S(X\to X)$ be densely defined, almost bisectorial with exponent
  $\beta>1/2$ and strictly dichotomous.
  Let $R$ be $p$-subordinate to $S$ with $p<2\beta-1$ and let $T=S+R$.
  If $\I\R\subset\varrho(T)$, then $T$ is strictly dichotomous and almost bisectorial with the 
  same exponent $\beta$.
  Moreover, if $S$ is bisectorial, then so is $T$.
\end{corollary}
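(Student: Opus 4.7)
The plan is to apply Theorem~\ref{theo:dichot-pert-beta} to the pair $S,T$. Since $R$ is $p$-subordinate to $S$, we have $\mD(R)\supset\mD(S)$, so $\mD(T)=\mD(S+R)=\mD(S)$; in particular $\mD(S)\cap\mD(T)=\mD(S)$ is dense in $X$, which gives condition \enumiref{enumi:dichot-pert-beta:de}. Condition \enumiref{enumi:dichot-pert-beta:ri} is precisely the hypothesis $\I\R\subset\varrho(T)$. The entire work is thus to verify the resolvent-difference estimate \enumiref{enumi:dichot-pert-beta:es} and, along the way, to check the almost bisectoriality of $T$.

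First I would estimate $\|R(S-\lambda)^{-1}\|$ on $\I\R$. For $y\in X$ and $x=(S-\lambda)^{-1}y\in\mD(S)\subset\mD(R)$ the almost bisectorial estimate gives $\|x\|\leq M|\lambda|^{-\beta}\|y\|$, and from $Sx=y+\lambda x$ one obtains $\|Sx\|\leq(1+M)|\lambda|^{1-\beta}\|y\|$ whenever $|\lambda|\geq 1$. Inserting these into the $p$-subordination inequality $\|Rx\|\leq c\|x\|^{1-p}\|Sx\|^p$ and using the algebraic identity $-\beta(1-p)+p(1-\beta)=p-\beta$ yields
\begin{equation*}
   \|R(S-\lambda)^{-1}\|\leq C_1|\lambda|^{p-\beta},
   \qquad \lambda\in\I\R,\ |\lambda|\geq 1.
\end{equation*}
Since $p<2\beta-1<\beta$ when $\beta<1$ (and $p<1=\beta$ when $\beta=1$), this bound tends to $0$ as $|\lambda|\to\infty$. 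Hence for $|\lambda|$ sufficiently large the operator $I+R(S-\lambda)^{-1}$ is invertible with norm $\leq 2$, and the factorisation $T-\lambda=(I+R(S-\lambda)^{-1})(S-\lambda)$ on $\mD(S)=\mD(T)$ gives
\begin{equation*}
   (T-\lambda)^{-1}=(S-\lambda)^{-1}(I+R(S-\lambda)^{-1})^{-1},
   \qquad \|(T-\lambda)^{-1}\|\leq 2M|\lambda|^{-\beta}.
\end{equation*}
Combined with the local boundedness of $(T-\lambda)^{-1}$ on bounded parts of $\I\R$ (a consequence of $\I\R\subset\varrho(T)$), this already shows that $T$ is almost bisectorial with the same exponent $\beta$.

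The resolvent identity
\begin{equation*}
   (S-\lambda)^{-1}-(T-\lambda)^{-1}=(T-\lambda)^{-1}R(S-\lambda)^{-1}
\end{equation*}
together with the two preceding bounds yields $\|(S-\lambda)^{-1}-(T-\lambda)^{-1}\|\leq C_2|\lambda|^{p-2\beta}$ for large $|\lambda|$ on $\I\R$. Choosing $\eps:=2\beta-1-p>0$ makes $|\lambda|^{1+\eps}\|(S-\lambda)^{-1}-(T-\lambda)^{-1}\|$ uniformly bounded on $\I\R$, verifying \enumiref{enumi:dichot-pert-beta:es}. An application of Theorem~\ref{theo:dichot-pert-beta} then delivers strict dichotomy and almost bisectoriality of $T$ with the same exponent $\beta$, and the bisectorial case $\beta=1$ is subsumed.

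I expect the only real obstacle to be the careful exponent bookkeeping: verifying that the threshold $p<2\beta-1$ is exactly what is needed so that the product of the $|\lambda|^{p-\beta}$ bound on $\|R(S-\lambda)^{-1}\|$ with the $|\lambda|^{-\beta}$ bound on $\|(T-\lambda)^{-1}\|$ produces an exponent strictly less than $-1$ on $|\lambda|$. Everything else is a routine Neumann-series and resolvent-identity computation.
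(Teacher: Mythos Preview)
Your proposal is correct and follows essentially the same route as the paper: verify $\mD(T)=\mD(S)$ for condition \enumiref{enumi:dichot-pert-beta:de}, estimate $\|R(S-\lambda)^{-1}\|\leq C|\lambda|^{p-\beta}$ via $p$-subordination and almost bisectoriality, use the Neumann series on $I+R(S-\lambda)^{-1}$ to bound $\|(T-\lambda)^{-1}\|$, and then the resolvent identity to obtain condition \enumiref{enumi:dichot-pert-beta:es} with $\eps=2\beta-1-p$. The only cosmetic difference is that you separately verify almost bisectoriality of $T$, which is in fact already delivered by Theorem~\ref{theo:dichot-pert-beta}.
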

\begin{proof}
  First note that $\mD(T)=\mD(S)$ since $\mD(S)\subset\mD(R)$.
  Hence condition \eqref{enumi:dichot-pert-beta:de} in
  Theorem~\ref{theo:dichot-pert-beta} is satisfied
  and it remains to show that \eqref{enumi:dichot-pert-beta:es} holds too.
  Consider $\lambda\in\I\R$ and the identity
  \[T-\lambda=(I+R(S-\lambda)^{-1})(S-\lambda).\]
  Using $p$-subordination and almost bisectoriality, we get
  \begin{align*}
    \|R(S-\lambda)^{-1}\|
    &\leq c\|(S-\lambda)^{-1}\|^{1-p}\|S(S-\lambda)^{-1}\|^p\\
    &\leq c\frac{M^{1-p}}{|\lambda|^{(1-p)\beta}}
    \bigl(1+M|\lambda|^{1-\beta}\bigr)^p
    \leq \tilde{c}\frac{|\lambda|^{(1-\beta)p}}{|\lambda|^{(1-p)\beta}}
    =\frac{\tilde{c}}{|\lambda|^{\beta-p}}
  \end{align*}
  with $M$ as in \eqref{eq:almbisect},
  $\tilde{c}>0$ appropriate, and $|\lambda|$ large.
  Note that $p<2\beta-1\leq\beta$.
  Hence, for $\lambda\in\I\R$, $|\lambda|$ large,
  this implies $\lambda\in\varrho(T)$ and
  \[\|(T-\lambda)^{-1}\|\leq 2\|(S-\lambda)^{-1}\|
  \leq \frac{2M}{|\lambda|^\beta}.\]
  Consequently,
  \[\|(S-\lambda)^{-1}-(T-\lambda)^{-1}\|
  \leq\|(T-\lambda)^{-1}\|\|R(S-\lambda)^{-1}\|
  \leq\frac{2M\tilde{c}}{|\lambda|^{2\beta-p}},\]
  where $2\beta-p>1$.
\end{proof}
\begin{remark}
  In the bisectorial case, the previous result has essentially been obtained
  in \cite[Corollary~3.10]{tretter-wyss},
  with the assumption that $S$ is sectorially dichotomous
  and the conclusion that $T$ is dichotomous,
  compare Remarks~\ref{rem:theo-splitting} and~\ref{rem:sectdichot}.
\end{remark}

\section{Examples} 
\label{sec:examp}

\subsection{Non-uniqueness of the decomposition $X=X_+ \oplus X_-$}
\label{subsec:nonunique2} 
In Example~\ref{ex:nonunique} we saw that the decomposition of a
dichotomous operator is not necessarily unique.
The following example shows that this is even possible
for bisectorial operators on Hilbert spaces.

A linear operator $S(X\to X)$ on a Hilbert space $X$ is called \emph{accretive} if
\[
\C_-\subset\varrho(S)
\quad\text{and}\quad
\|(S-\lambda)^{-1}\|\leq\frac{1}{|\re\lambda|},\quad \re\lambda<0.\]
For instance, if $S$ is the generator of a nilpotent contraction semigroup on $X$, then $-S$ is accretive with $\sigma(S) = \varnothing$.

Every accretive operator $S$ has a square root $S^{1/2}$ which is sectorial with any angle $\theta>\pi/4$, see \cite[Proposition~3.1.2]{haase}.
In particular, $S^{1/2}$ is bisectorial.

\begin{example}\label{ex:nonunique2}
  Let $X$ be a Hilbert space and let $S(X\to X)$ be an accretive operator
  with $\sigma(S)=\varnothing$. 
  Then $S-\lambda^2=(S^{1/2}-\lambda)(S^{1/2}+\lambda)$
  shows that
  $\sigma(S^{1/2})=\varnothing$ too and, 
  as in Example~\ref{ex:nonunique},
  $S^{1/2}$ is dichotomous with respect to either of the 
  two decompositions
  \[X_+=X,\;\; X_-=\{0\}
  \qquad\text{and}\qquad
  X_+=\{0\},\;\; X_-=X.\]
  Here $S^{1/2}$ is strictly dichotomous only with respect to the 
  choice
  $X_+=X$, $X_-=\{0\}$.
\end{example} 

\subsection{An invertible bisectorial non-dichotomous operator}
\label{subsec:macintosh-yagi} 
McIntosh and Yagi \cite{mcintosh-yagi} gave the following example of an
invertible bisectorial operator that is not dichotomous.  We only sketch
their construction here.

\begin{example}\label{ex:mcintosh-yagi}
  Let $M>1$. 
  For every $m\in\N$, $m\geq 0$, choose $n\in\N$ such that
  \[\frac{M-1}{\pi\sqrt{18}}\log\left(\frac{n}{2}+1\right)\geq m\]
  and $(n+1) \times (n+1)$ matrices $D_m$ and $B_m$ as follows:
  $D_m$ is diagonal with entries $2^0,2^1,\dots,2^n$
  and $B_m$ is the Toeplitz matrix
  \[B_m=\frac{M-1}{\pi}
  \pmat{b_0&b_1&\dots&b_n\\
  b_{-1}&\ddots&\ddots&\vdots\\
  \vdots&\ddots&\ddots&b_1\\
  b_{-n}&\dots&b_{-1}&b_0}, \quad
  b_0=0,\;
  b_{\pm j}=\pm\frac{1}{j},\; j=1,\dots,n.\]
  Consider the block diagonal operator $A$ on $X=l^2$ given by
  \[A=\pmat{A_1&&\\&A_2&\\&&\ddots}, \qquad
  A_m=\pmat{D_m&B_mD_m\\0&-D_m}.\]
  It is then shown in \cite{mcintosh-yagi} that 
  $\sigma(A)\subset\,]-\infty,-1]\cup[1,\infty[\,$ and
  $\|(A-\lambda)^{-1}\|\leq M|\lambda|^{-1}$ for $\lambda\in\I\R\setminus\{0\}$.
  In particular, $A$ is a bisectorial operator.
  However, $A$ is \emph{not} dichotomous:
  In fact, the spectral projection $P_m$ corresponding to the positive
  eigenvalues of $A_m$ is
  \[P_m=\pmat{I&Z_m\\0&0}\]
  where the $(n+1)\times(n+1)$ matrix $Z_m$ satisfies
  $D_mZ_m+Z_mD_m=B_mD_m$ and $\|Z_m\|\geq m$.
  Since every dichotomous decomposition $X=X_+\oplus X_-$ must contain
  the eigenspace for $\lambda\in\C_\pm$ in $X_\pm$,
  the corresponding projection $P_+$ must contain all $P_m$ and thus 
  $P_+$ must be unbounded which contradicts the dichotomy assumption.
\end{example} 

\subsection{A densely defined operator $S$ with non-densely defined sectorial $S|G_\pm$}
\label{subsec:nondenseSG}
In this section we construct a densely defined
operator $S$ whose restriction $S|G_+$ to the positive spectral
subspace is not densely defined.
According to Theorem~\ref{theo:Mpm}, this is equivalent to $M_+\neq G_+$.
Our operator $S$ will be almost bisectorial, but not bisectorial, 
and its restriction $S|G_+$ will be sectorial.

Let $C([0,1])$ be the space of all continuous functions on $[0,1]$ and $C^1([0,1])$ the subspace consisting of all once differentiable functions on $[0,1]$ with continuous first derivative.

\begin{example}\label{ex:nodense} 
  On $X=C([0,1])$  consider the operator
  \[A_0f=f',\qquad
  \mD(A_0)=\set{f\in C^1([0,1])}{f(0)=0}.\]
  Then $A_0$ is non-densely defined, $\sigma(A_0)=\varnothing$, and
  $A_0$ is accretive.
  As in Section~\ref{subsec:nonunique2} it follows that
  $A_0$ has a square root, $A=A_0^{1/2}$,
  where $A$ is sectorial with any  angle $\theta>\pi/4$
  and $\sigma(A)=\varnothing$.
  Moreover, $A$ is non-densely defined too
  (otherwise $A_0=A^2$ had to be densely defined).
  In fact,
  \[\overline{\mD(A)}=\overline{\mD(A_0)}=\set{f\in X}{f(0)=0}.\]
  Let $w\in X$, $w(t)=1$ constant.
  Hence $w\in X\setminus\overline{\mD(A)}$
  and $X=\overline{\mD(A)}\oplus\linspan\{w\}$.
  Fix $0<s<1/2$ and consider the rank one operator
  $B:\mD(B)\subset l^2\to X$,
  \[B\alpha=\sum_{k=1}^\infty k^s\alpha_k \cdot w, \qquad
  \mD(B)=\biggset{\alpha = (\alpha_k)_{k=1}^\infty\in l^2}
     {\sum_{k=1}^\infty k^{s}|\alpha_k| <\infty}.\]
  Let $q\geq1$ and let
  $C:\mD(C)\subset l^2\to l^2$,
  \[C\alpha=(-k^q\alpha_k)_{k=1}^\infty, \qquad
  \mD(C)=\bigset{\alpha = (\alpha_k)_{k=1}^\infty\in l^2}
  {(-k^q\alpha_k)_{k=1}^\infty\in l^2}.\]

  \begin{lemma}
     The operators $B$ and $C$ are densely defined, 
     $C$ is selfadjoint, $\sigma(C) = \{-k^q : k\in\N\}$,
     $\mD(C)\subset\mD(B)$, 
     and $B$ is $1/q$-subordinate to $C$.
  \end{lemma}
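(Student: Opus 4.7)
The plan is to verify the four assertions in order, with the $1/q$-subordination being the substantive step. For density, the space $c_{00}$ of finitely supported sequences lies in $\mD(B)\cap\mD(C)$ and is dense in $l^2$. The operator $C$ is multiplication on $l^2$ by the real sequence $(-k^q)_{k\in\N}$ with maximal domain, hence classically selfadjoint, and its spectrum equals the closure of the set of values $\{-k^q:k\in\N\}$; since $q\geq 1$ forces $-k^q\to-\infty$, this set is already closed. For $\mD(C)\subset\mD(B)$, I would apply Cauchy--Schwarz to the factorisation $k^s=k^{s-q}\cdot k^q$, giving
$$\sum_k k^s|\alpha_k|\leq \Bigl(\sum_k k^{2(s-q)}\Bigr)^{1/2}\|C\alpha\|_{l^2},$$
where the first factor is finite because $s<1/2\leq q-1/2$ forces $2(s-q)<-1$.

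For the $1/q$-subordination the plan is to split the defining sum at a cutoff $N\in\N$ and optimize. Starting from $\|B\alpha\|_X\leq\|w\|_X\sum_k k^s|\alpha_k|$ and applying Cauchy--Schwarz separately on head and tail yields
$$\Bigl|\sum_{k\leq N}k^s\alpha_k\Bigr|\leq C_1 N^{s+1/2}\|\alpha\|_{l^2},\qquad \Bigl|\sum_{k>N}k^s\alpha_k\Bigr|\leq C_2 N^{s+1/2-q}\|C\alpha\|_{l^2},$$
where the second bound again uses $2(s-q)+1<0$. Balancing the two by choosing $N$ so that $N^q\sim\|C\alpha\|_{l^2}/\|\alpha\|_{l^2}$ produces
$$\|B\alpha\|_X\leq c\,\|\alpha\|_{l^2}^{1-p_0}\|C\alpha\|_{l^2}^{p_0},\qquad p_0:=\frac{s+1/2}{q}.$$
Because $s<1/2$, the exponent $p_0$ is strictly less than $1/q$, so direct Cauchy--Schwarz only delivers $p_0$-subordination.

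The main obstacle, and the only delicate point, is to upgrade this to $1/q$-subordination. Here the hypothesis $q\geq 1$ enters a second time: the eigenvalues of $C$ all have modulus at least $1$, so $\|C\alpha\|_{l^2}\geq\|\alpha\|_{l^2}$ for every $\alpha\in\mD(C)$. Then the elementary inequality
$$\|\alpha\|_{l^2}^{1-p_0}\|C\alpha\|_{l^2}^{p_0}=\|\alpha\|_{l^2}^{1-1/q}\cdot\|\alpha\|_{l^2}^{1/q-p_0}\|C\alpha\|_{l^2}^{p_0}\leq\|\alpha\|_{l^2}^{1-1/q}\|C\alpha\|_{l^2}^{1/q},$$
valid because $1/q-p_0>0$, converts the $p_0$-subordination into the claimed $1/q$-subordination and completes the proof.
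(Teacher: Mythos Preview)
Your argument is correct, but it takes a different and somewhat longer route than the paper's.

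The paper obtains the $1/q$-subordination in a single stroke: it applies Cauchy--Schwarz with the factorisation $k^s|\alpha_k|=k^{s-1}\cdot k|\alpha_k|$ to get
\[
\|B\alpha\|\leq\Bigl(\sum_{k}k^{2s-2}\Bigr)^{1/2}\Bigl(\sum_k k^2|\alpha_k|^2\Bigr)^{1/2},
\]
and then bounds the weighted $l^2$-norm on the right by a direct H\"older interpolation,
\[
\sum_k k^2|\alpha_k|^2\leq\Bigl(\sum_k|\alpha_k|^2\Bigr)^{1-1/q}\Bigl(\sum_k k^{2q}|\alpha_k|^2\Bigr)^{1/q}
=\|\alpha\|^{2(1-1/q)}\|C\alpha\|^{2/q}.
\]
This immediately gives the exponent $1/q$ with no need for a cutoff or an upgrading step.

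Your approach---splitting at a threshold $N$, balancing, and then upgrading the resulting exponent $p_0=(s+1/2)/q<1/q$ via the a~priori bound $\|C\alpha\|\geq\|\alpha\|$---is sound and uses only Cauchy--Schwarz rather than H\"older, which is arguably more elementary. The price is the extra upgrading step, which in turn relies on the specific feature $|k^q|\geq 1$; the paper's H\"older argument never needs this and would work verbatim for any multiplication operator $C$ with weights bounded away from zero or not. In short: both proofs are valid; the paper's is shorter and slightly more robust, yours is a bit more hands-on.
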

  \begin{proof}
     The first assertions are clear.
     The last assertion follows because for all $\alpha\in\mD(C)$
     \begin{align*}
	&\|B\alpha\|\leq\sum_{k=1}^\infty k^s|\alpha_k|
	\leq\biggl(\sum_{k=1}^\infty \frac{1}{k^{2-2s}}\biggr)^{\frac12}
	\biggl(\sum_{k=1}^\infty k^2|\alpha_k|^2\biggr)^{\frac12},\\
	&\sum_{k=1}^\infty k^2|\alpha_k|^2\leq
	\biggl(\sum_{k=1}^\infty|\alpha_k|^2\biggr)^{1-\frac1q}
	\biggl(\sum_{k=1}^\infty k^{2q}|\alpha_k|^2\biggr)^{\frac1q}.
	\qedhere
     \end{align*}
  \end{proof}

  On $X\times l^2$ consider the operator
  \[S\pmat{f\\\alpha}=\pmat{A(f-B\alpha)\\C\alpha}, \qquad
  \mD(S)=\biggset{\pmat{f\\\alpha}\in X\times \mD(C)}{f-B\alpha\in\mD(A)}.\]

  \begin{proposition}
     The operator $S$ has the following properties:
     \begin{enumerate}
	\item $S$ is densely defined.
	\item $\sigma(S)=\sigma(C)$.
	\item $S$ is almost bisectorial.
	\item $G_+=X\times\{0\}$, $S|G_+$ is sectorial 
	  and $\mD(S|G_+)=\mD(A)\times\{0\}$.
	  In particular, $S|G_+$ is not densely defined.
     \end{enumerate}
  \end{proposition}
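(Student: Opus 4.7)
The strategy is to reduce everything to an explicit formula for the resolvent of $S$, which inherits the block-triangular shape of the definition. For $\lambda\in\varrho(C)$, solving $(S-\lambda)(f,\alpha)=(u,\beta)$ gives first $\alpha=(C-\lambda)^{-1}\beta$ and then, with $g:=f-B\alpha\in\mD(A)$ satisfying $(A-\lambda)g=u+\lambda B\alpha$, the formula
\[
(S-\lambda)^{-1}=\begin{pmatrix}(A-\lambda)^{-1}&A(A-\lambda)^{-1}B(C-\lambda)^{-1}\\0&(C-\lambda)^{-1}\end{pmatrix}.
\]
For this to make sense one has to verify that $(C-\lambda)^{-1}$ maps $l^2$ into $\mD(B)$, which follows from Cauchy--Schwarz together with the hypothesis $s<1/2\leq q-1/2$. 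Conversely, for $\lambda=-k^q\in\sigma(C)$ the vector $(f,e_k)$ with $f=k^sw-k^{q+s}(A+k^q)^{-1}w$ lies in $\mD(S)$ and satisfies $S(f,e_k)=-k^q(f,e_k)$, so $\lambda\in\sigma(S)$. Hence $\sigma(S)=\sigma(C)$, establishing (ii).

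For (iii) I would estimate the three nonzero blocks on $\I\R$. Sectoriality of $A$ gives $\|(A-\lambda)^{-1}\|=O(|\lambda|^{-1})$, and self-adjointness of the negative operator $C$ gives $\|(C-\lambda)^{-1}\|=O(|\lambda|^{-1})$. For the off-diagonal block, $A(A-\lambda)^{-1}=I+\lambda(A-\lambda)^{-1}$ is only uniformly bounded, so the decay must come from $\|B(C-\lambda)^{-1}\|$. Applying Cauchy--Schwarz to $\|B(C-\I t)^{-1}\beta\|_X=\bigl|\sum_k k^s\beta_k/(-k^q-\I t)\bigr|\,\|w\|$ yields the bound $\|w\|\,\|\beta\|_{l^2}\bigl(\sum_k k^{2s}/(k^{2q}+t^2)\bigr)^{1/2}$; splitting this sum at $k^q=|t|$ gives the rate $|t|^{(2s+1)/(2q)-1}$. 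With $\beta:=1-(2s+1)/(2q)\in(0,1)$ we obtain almost bisectoriality with exponent $\beta$. Statement (iv) then falls out of the same formula: whenever $\beta_k\neq 0$ the second component $(C-\lambda)^{-1}\beta$ has a pole at $-k^q\in\overline{\C_-}$, so any $(u,\beta)\in G_+$ must have $\beta=0$; and for $(u,0)$ the resolvent reduces to $((A-\lambda)^{-1}u,0)$, which extends analytically and boundedly to $\overline{\C_-}$ because $\sigma(A)=\varnothing$ and $A$ is sectorial with angle strictly less than $\pi/2$. Thus $G_+=X\times\{0\}$, $\mD(S|G_+)=\mD(A)\times\{0\}$, the restriction $S|G_+$ is a copy of $A$ (so sectorial), and it fails to be densely defined because $w\in X\setminus\overline{\mD(A)}$ gives $(w,0)\in G_+\setminus\overline{\mD(A)\times\{0\}}$.

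The main obstacle is (i), density of $\mD(S)$, because the condition $f-B\alpha\in\mD(A)$ couples the two components. Given $(f,\alpha)\in X\times l^2$, I would decompose $f=g_0+c_0w$ with $g_0\in\overline{\mD(A)}$ (possible since $X=\overline{\mD(A)}\oplus\linspan\{w\}$) and approximate $g_0$ by $g_n\in\mD(A)$. The real difficulty is to find $\alpha'_n\in\mD(C)$ which simultaneously approximates $\alpha$ in $l^2$ and satisfies the rigid constraint $\sum_k k^s(\alpha'_n)_k=c_0$, so that $g_n+c_0w-B\alpha'_n=g_n\in\mD(A)$. I would resolve this by taking truncations $\alpha_n\in\mD(C)$ of $\alpha$ and correcting them via single-coordinate perturbations $r_ne_{N_n}$ with $r_n:=(c_0-\sum_k k^s(\alpha_n)_k)/N_n^s$; choosing $N_n$ large enough forces $|r_n|\to 0$ in $l^2$ while pinning the sum at $c_0$, and then $(g_n+c_0w,\alpha_n+r_ne_{N_n})\in\mD(S)$ converges to $(f,\alpha)$.
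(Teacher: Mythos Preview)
Your argument is correct and follows the same overall strategy as the paper: everything is driven by the block-triangular resolvent formula you derive, and parts (ii)--(iv) match the paper's proof almost verbatim (the paper invokes the $1/q$-subordination of $B$ to $C$ for (iii), which gives the slightly weaker exponent $\beta=1-\tfrac1q$, whereas your direct Cauchy--Schwarz estimate yields the sharper $\beta=1-\tfrac{2s+1}{2q}$).

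The one place where your route genuinely differs is (i). Your constructive approximation works, but the paper's argument is considerably shorter: rather than building approximants to an arbitrary $(f,\alpha)$, the paper simply shows that $\overline{\mD(S)}$ is all of $X\times l^2$ by exhibiting enough special elements. Namely, $\overline{\mD(A)}\times\{0\}\subset\overline{\mD(S)}$ is clear; then $(w,k^{-s}e_k)\in\mD(S)$ (since $B(k^{-s}e_k)=w$) and $k^{-s}e_k\to 0$, so $(w,0)\in\overline{\mD(S)}$ and hence $X\times\{0\}\subset\overline{\mD(S)}$; finally $(Bx,x)\in\mD(S)$ for every $x\in\mD(C)$, which together with $(Bx,0)\in\overline{\mD(S)}$ gives $\{0\}\times\mD(C)\subset\overline{\mD(S)}$. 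This sidesteps the need to control the possibly divergent partial sums $\sum_{k\leq n}k^s\alpha_k$ via the growing index $N_n$ in your correction term.
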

  \begin{proof}
     For the proof of (i) note that $\overline{\mD(A)}\times\{0\}\subset\overline{\mD(S)}$ and $(w,k^{-s}e_k)\in\mD(S)$ where $(e_k)$ is the standard orthonormal basis in $l^2$.
     This shows $(w,0)\in\overline{\mD(S)}$, hence
     $X\times\{0\}\subset\overline{\mD(S)}$.
     Finally, for every $x\in\mD(C)$, $(Bx,x)\in\mD(S)$ and thus 
     $(0,x)\in\overline{\mD(S)}$, so we showed that $S$ is densely defined.

     (ii) can be shown by direct computation. Moreover, for $\lambda\in\varrho(S)$,
     \begin{equation}\label{eq:nodense:resolv}
	(S-\lambda)^{-1}\pmat{f\\\alpha}=
	\pmat{(A-\lambda)^{-1}f+A(A-\lambda)^{-1}B(C-\lambda)^{-1}\alpha\\
	(C-\lambda)^{-1}\alpha}.
     \end{equation}

     On $\I\R$ the norms $\|\lambda(A-\lambda)^{-1}\|$ and $\|A(A-\lambda)^{-1}\|$
     as well as the analogous expressions for $C$ are uniformly bounded.
     The subordination property of $B$ yields the estimate
     \(\|B(C-\lambda)^{-1}\|\leq c|\lambda|^{-\beta}\),
     $\lambda\in\I\R\setminus\{0\}$, with
     $\beta=1-\frac1q$ and $c>0$.
     Therefore 
     \[\|(S-\lambda)^{-1}\|\leq\frac{M}{|\lambda|^\beta}, \qquad
     \lambda\in\I\R\setminus\{0\},\; \beta=1-\frac1q,\]
     with some $M>0$, which proves (iii).

     For the proof of (iv) let us first calculate $G_+$.
     Since for every $f\in X$,
     \[(S-\lambda)^{-1}\pmat{f\\0}=\pmat{(A-\lambda)^{-1}f\\0}\]
     is a bounded analytic function on $\overline{\C_-}$, we have
     $(f,0)\in G_+$.
     On the other hand, if $(f,\alpha)\in G_+$, then
     \eqref{eq:nodense:resolv} implies that $(C-\lambda)^{-1}\alpha$ has a
     bounded analytic extension to $\C_-$, which is possible only if $\alpha=0$.
     Thus 
     \[G_+=X\times\{0\},\]
     and therefore $\mD(S|G_+)=\mD(A)\times\{0\}$.
     It follows that $S|G_+$ is not densely defined and $S|G_+\cong A$, so $S|G_+$ is sectorial.
  \end{proof}

\end{example} 

\subsection{A non-densely defined non-sectorial but almost sectorial operator}
\label{subsec:nondenseSG2}
As a simple example of a non-densely defined operator on
a Hilbert space that satisfies an estimate \eqref{eq:almsect} with 
$\beta<1$ but is not sectorial,
we consider now an ordinary differential operator on the 
Sobolev space $H^1([0,1])$.
In a Banach space setting, operators of this form
have been considered in
\cite{silchenko-sobolevskii}.
A non-densely defined operator which is even sectorial and defined on
a non-reflexive Banach space is the operator $A$ from Example~\ref{ex:nodense}.
\begin{example}\label{ex:rdecbeta}
  For $m\geq 1$
  consider the operator $A_0$
  on $L^2([0,1])$ given by
  \begin{align*}
    &A_0f=(-1)^mf^{(2m)}, \\
    &\mD(A_0)=\set{f\in H^{2m}([0,1])}{f^{(j)}(0)=f^{(j)}(1)=0,\,
    j=0,\dots,m-1}.
  \end{align*}
  The operator $A_0$ is  positive selfadjoint.
  Let $A$ be the part of $A_0$ on the Sobolev space $H^1([0,1])$, i.e.,
  \begin{align*}
    &Af=A_0f=(-1)^mf^{(2m)},\\
    &\mD(A)=\set{f\in H^{2m+1}([0,1])}{f^{(j)}(0)=f^{(j)}(1)=0,\,
    j=0,\dots,m-1}.
  \end{align*}
  We easily see that $A$ is closed and $\sigma_p(A_0)=\sigma_p(A)$.
  Moreover, if $\lambda \in\varrho(A_0)$ then $A-\lambda$ is bijective
  with inverse
  $(A-\lambda)^{-1}=(A_0-\lambda)^{-1}|H^1([0,1])$
  and hence $\sigma_p(A)=\sigma(A)=\sigma(A_0)$.

  Since $A_0$ is selfadjoint, it is sectorial with arbitrary small angle $\theta>0$.
  Let $g\in L^2([0,1])$, $|\arg\lambda|\geq\theta>0$ and set
  $f=(A_0-\lambda)^{-1}g$.
  Then the identity $(-1)^mf^{(2m)}=\lambda f+g$
  yields
  \[\|f^{(2m)}\|_{L^2}\leq \big( 1+|\lambda|\|(A_0-\lambda)^{-1}\| \big)\|g\|_{L^2}
  \leq(1+M)\|g\|_{L^2}\]
  where $M>0$ is the constant from the sectoriality estimate
  \eqref{eq:sect}.
  Since $|f|_{2m}=\|f\|_{L^2}+\|f^{(2m)}\|_{L^2}$ is an equivalent norm
  on $H^{2m}([0,1])$
  and $\|(A_0-\lambda)^{-1}\|$ is uniformly bounded for $|\arg\lambda|\geq\theta$,
  this implies
  \[\|(A_0-\lambda)^{-1}g\|_{H^{2m}}\leq c_1\|g\|_{L^2},
  \qquad |\arg\lambda|\geq\theta,\]
  with $c_1>0$ depending on $\theta>0$.
  Using the interpolation inequality
  $\|f\|_{H^{k}}\leq c_2\|f\|_{L^2}^{1-k/n}\|f\|_{H^n}^{k/n}$,
  we obtain for $g\in H^1([0,1])$
  \begin{align*}
    \big\|(A_0-\lambda)^{-1}g\big\|_{H^1}
    &\leq c_2
    \big\|(A_0-\lambda)^{-1}g\big\|_{L^2}^{1-\frac{1}{2m}}\
    \big\|(A_0-\lambda)^{-1}g\big\|_{H^{2m}}^{\frac{1}{2m}}\\[1ex]
    &\leq\frac{c_3}{ |\lambda|^{1-\frac{1}{2m}} }\, \|g\|_{L^2}
    \leq\frac{c_3}{ |\lambda|^{1-\frac{1}{2m}} }\, \|g\|_{H^1}
  \end{align*}
  with $c_3>0$ depending on $\theta>0$.
  Consequently $A$ is almost sectorial,
  \begin{equation}\label{eq:rdec-ex}
    \|(A-\lambda)^{-1}\|\leq\frac{c_3}{|\lambda|^\beta},
    \qquad |\arg\lambda|\geq \theta,\quad
    \beta=1-\frac{1}{2m}.
  \end{equation}
  Moreover $A$ even has a compact resolvent: indeed our calculations imply
  that $(A_0-\lambda)^{-1}$ is a bounded operator from $L^2([0,1])$ to
  $H^1([0,1])$ and the embedding $H^1([0,1])\hookrightarrow L^2([0,1])$
  is compact.
  Finally note that $A$ is not densely defined since the closure of
  $\mD(A)$ in $H^1([0,1])$ is $H^1_0([0,1])$. 
  In particular, the estimate \eqref{eq:rdec-ex} cannot be improved to
  $\beta=1$, i.e.,
  $A$ is not a sectorial operator because sectorial
  operators in reflexive spaces are densely defined.
  On the other hand, \eqref{eq:rdec-ex} can be improved to
  $\beta=1-\frac{1}{4m}$ as indicated in 
  \cite{silchenko-sobolevskii}.
\end{example}

\subsection{A densely defined operator $S$ with non-densely defined almost sectorial restriction $S|G_\pm$}
The following is a variant of Example~\ref{ex:nodense} in a Hilbert space
setting.
Here $S|G_+$ is non-densely defined and almost sectorial.

\begin{example}\label{ex:nodense-hilb}
  Let $A$ be the operator from Example~\ref{ex:rdecbeta} acting on
  $X=H^1([0,1])$. Then $\overline{\mD(A)}=H^1_0([0,1])$.
  Let $w_1(t)=t$, $w_2(t)=1-t$, so that
  \[X=\overline{\mD(A)}\oplus\linspan\{w_1,w_2\}.\]
  For $0<s<1/2$ define $B:\mD(B)\subset l^2\to X$ by
  \begin{align*}
    &B\alpha=\sum_{k=1}^\infty (2k)^s\alpha_{2k} \cdot w_1
    +\sum_{k=1}^\infty (2k-1)^s\alpha_{2k-1} \cdot w_2, \\
    &\mD(B)=\biggset{\alpha=(\alpha_k)_{k=1}^\infty\in l^2}
       {\sum_{k=1}^\infty k^{s}|\alpha_k| <\infty}.
  \end{align*}
  As in Example~\ref{ex:nodense} consider also the selfadjoint operator
  $C(\alpha_k)=(-k^q\alpha_k)$ on $l^2$ and then 
  $S:\mD(S)\subset X\times l^2\to X\times l^2$,
  \[S\pmat{f\\\alpha}=\pmat{A(f-B\alpha)\\C\alpha}, \qquad
  \mD(S)=\biggset{\pmat{f\\\alpha}\in X\times \mD(C)}{f-B\alpha\in\mD(A)}.\]
  We obtain again that $B$ is $1/q$-subordinate to $C$, that $S$ is densely
  defined and that $(S-\lambda)^{-1}$ is given by \eqref{eq:nodense:resolv}
  where now $\sigma(S)=\sigma(A)\cup\sigma(C)$.
  Together with the estimate
  $\|(A-\lambda)^{-1}\|\leq c|\lambda|^{-\beta}$ on $\I\R$, 
  $\beta=1-\frac{1}{2m}$, 
  we then arrive at
  \[\|(S-\lambda)^{-1}\|\leq \frac{M}{|\lambda|^{\gamma}}, \qquad
  \lambda\in\I\R\setminus\{0\},\; \gamma=1-\frac{1}{2m}-\frac{1}{q},\]
  with some $M>0$.
  Theorems~\ref{theo:splitting} and~\ref{theo:almbisect}
  thus yield the $S$-invariant subspaces $G_\pm$.
  As in Example~\ref{ex:nodense}, we  derive
  $G_+=X\times\{0\}$ and $S|G_+\cong A$, which is not densely defined.
  Here $S|G_+$ is not sectorial but almost sectorial
  with $\beta=1-\frac{1}{2m}$.
\end{example}

\subsection{Hamiltonian operator matrices}
We can apply our theory to the Hamiltonian operator matrix appearing in 
systems theory in the case of so-called unbounded control and observation
operators.
We obtain criteria ensuring that the Hamiltonian is bisectorial and
strictly dichotomous.
Our setting generalises the ones in \cite{tretter-wyss}
and \cite{wyss-unbctrlham}
since we do not require a basis of (generalised) eigenvectors of the
Hamiltonian and at the same time
allow the control and observation operators to map 
out of the state space.

\begin{example}\label{ex:ham}
  Let $A$ be a sectorial operator on a Hilbert
  space $H$  with angle less than $\pi/2$ and $0\in\varrho(A)$.
  Let $H_1=\mD(A)$ be equipped with the graph norm and $H_{-1}$ the completion
  of $H$ with respect to the norm $\|A^{-1}\cdot\|$.
  Let $H_1^d$, $H_{-1}^d$ be the corresponding spaces for $A^*$.
  Moreover, we consider  intermediate spaces
  in the sense of Lions and Magenes
  \cite[Chapter~1]{lions-magenes}\footnote{
    This is equivalent to
    taking complex interpolation spaces, see
    \cite[Chapter~1, \S14]{lions-magenes}. Note that
    all involved spaces are Hilbert spaces.},
  \[H_s=[H_1,H]_{1-s},\quad H_{-s}=[H,H_{-1}]_{s},\quad s\in[0,1],\]
  Again, $H_s^d$ and $H_{-s}^d$ are defined analogously.
  In the special case when $A$ is selfadjoint,
  we obtain the fractional
  domain spaces $H_s=H_s^d=\mD(A^s)=\mD((A^*)^s)$,
  $s\in[-1,1]$,
  compare \cite[\S3]{wyss-unbctrlham}.
  In general however, $H_s\neq H_s^d$.
  The intermediate spaces yield bounded extensions
  \[A:H_{1-s}\longrightarrow H_{-s},\quad
  A^*:H^d_{1-s}\longrightarrow H^d_{-s},\quad s\in[0,1].\]
  Using the scalar product $(\cdot|\cdot)$ of $H$, we can identify
  the dual of $H_{-s}$ with $H_s^d$.
  This means that the scalar product of $H$
  extends to a sesquilinear form $(x|y)_{-s,s*}$,
  $x\in H_{-s}$, $y\in H_s^d$.
  Similarly, the dual of $H_s$ is identified with $H_{-s}^d$.
  This is also referred to as taking duality with respect to the pivot space
  $H$, see e.g.\ \cite[\S\S2.9, 2.10]{tucsnak-weiss}.

  Let $U,Y$ be Hilbert spaces and consider for some fixed $0<s<1/2$
  bounded operators
  $B:U\longrightarrow H_{-s}$, $C:H_s\longrightarrow Y$, the control and observation operators,
  respectively.
  With the above duality identifications,
  their adjoints are bounded operators $B^*:H_s^d\longrightarrow U$,
  $C^*:Y\longrightarrow H^d_{-s}$. The Hamiltonian is now given by the operator matrix
  \[T=\pmat{A&BB^*\\C^*C&-A^*}.\]
  Let $V_s=H_s\times H^d_s$ and $V=H\times H$. The Hamiltonian induces
  the bounded linear mapping $T:V_{1-s}\longrightarrow V_{-s}$, which we  consider,
  for the moment, as
  an unbounded
  operator on $V_{-s}$ with domain $V_{1-s}$.
  We use the decomposition
  \[T=S+R, \quad S=\pmat{A&0\\0&-A^*},\quad
  R=\pmat{0&BB^*\\C^*C&0},\]
  where $S:V_{1-s}\longrightarrow V_{-s}$ and $R:V_s\longrightarrow V_{-s}$ are bounded.
  Let $\lambda\in\I\R\setminus\{0\}$.
  In the following estimates, $c$ denotes
  a positive constant which is independent of $\lambda$, but may change
  from estimate to estimate.
  By the sectoriality of $A$, we have
  \[\|(A-\lambda)^{-1}\|\leq c|\lambda|^{-1},\quad
  \|A(A-\lambda)^{-1}\|\leq 1+|\lambda|\|(A-\lambda)^{-1}\|\leq c,\]
  and the second inequality implies
  \[\|(A-\lambda)^{-1}\|_{H\to H_1}\leq c,\quad
  \|(A-\lambda)^{-1}\|_{H_{-1}\to H}\leq c.\]
  Interpolation yields
  \[\|(A-\lambda)^{-1}\|_{H\to H_s}\leq c|\lambda|^{-(1-s)}, \quad
  \|(A-\lambda)^{-1}\|_{H_{-s}\to H}\leq c|\lambda|^{-(1-s)},\]
  and then
  \[\|(A-\lambda)^{-1}\|_{H_{-s}\to H_s}\leq c|\lambda|^{-(1-2s)}.\]
  Since an analogous estimate holds for $(A^*-\lambda)^{-1}$, we obtain
  \begin{equation}\label{eq:interpol}
    \|(S-\lambda)^{-1}\|_{V_{-s}\to V_s}\leq c|\lambda|^{-(1-2s)}.
  \end{equation}
  Consider the identity
  \[T-\lambda=(I+R(S-\lambda)^{-1})(S-\lambda).\]
  From \eqref{eq:interpol} we obtain
  \[\|R(S-\lambda)^{-1}\|_{V_{-s}\to V_{-s}}
  \leq\|R\|_{V_s\to V_{-s}}\|(S-\lambda)^{-1}\|_{V_{-s}\to V_s}
  \leq c|\lambda|^{-(1-2s)}\]
  and since $s<1/2$ we get that, for
  $\lambda\in\I\R$ and $|\lambda|$ large, 
  $I+R(S-\lambda)^{-1}$ is an isomorphism on $V_{-s}$;
  consequently
  $\lambda\in\varrho(T)$ with
  \[\|(T-\lambda)^{-1}\|_{V_{-s}\to V_{-s}}
  \leq c\|(S-\lambda)^{-1}\|_{V_{-s}\to V_{-s}}
  \leq c|\lambda|^{-1}.\]
  Moreover
  \[(T-\lambda)^{-1}-(S-\lambda)^{-1}=-(T-\lambda)^{-1}R(S-\lambda)^{-1}\]
  and hence
  \[\|(T-\lambda)^{-1}-(S-\lambda)^{-1}\|_{V_{-s}\to V_{-s}}
  \leq c|\lambda|^{-(2-2s)}.\]
  We consider now the part of $T$ in $V_s$, which we denote again by $T$.
  Then, similar to the above, the identity
  \[T-\lambda=(S-\lambda)(I+(S-\lambda)^{-1}R)\]
  and the estimate
  \[\|(S-\lambda)^{-1}R\|_{V_s\to V_s}\leq c|\lambda|^{-(1-2s)}\]
  yield
  \begin{gather*}
    \|(T-\lambda)^{-1}\|_{V_{s}\to V_{s}}
    \leq c|\lambda|^{-1},\\
    \|(T-\lambda)^{-1}-(S-\lambda)^{-1}\|_{V_{s}\to V_{s}}
    \leq c|\lambda|^{-(2-2s)},
  \end{gather*}
  for   $\lambda\in\I\R$, $|\lambda|$ large.

  For the rest of this example,
  we consider $T$ as an operator on $V=H\times H$, i.e., we take
  the part of $T$ in $V$.
  Applying interpolation to the above results, 
  we get that,
  if $\lambda\in\I\R$ and $|\lambda|$ large enough, then
  $\lambda\in\varrho(T)$ and
  \begin{gather}\label{eq:ham-bisect-est}
    \|(T-\lambda)^{-1}\|\leq c|\lambda|^{-1},\\
    \label{eq:ham-pert-est}
    \|(T-\lambda)^{-1}-(S-\lambda)^{-1}\|
    \leq c|\lambda|^{-(2-2s)}.
  \end{gather}

  Next we show that $T$ is bisectorial.
  In view of \eqref{eq:ham-bisect-est} it suffices to show that $\I\R\subset\varrho(T)$.
  This will be done using the same technique as in \cite[Lemma~4.5]{tretter-wyss}.
  Suppose $\I t\in \I\R$ is contained in the 
  approximate point spectrum $\sigma_{\mathrm{app}}(T)$, i.e.,
  there exists a sequence $(x_n,y_n)\in V_{1-s}$ such that
  $\|x_n\|^2+\|y_n\|^2=1$ and
  $(T-\I t)(x_n,y_n)\to0$ in $V$ as $t\to\infty$.
  This implies
  \begin{align*}
    &((A-\I t)x_n|y_n)_{-s,s*}+(BB^*y_n|y_n)_{-s,s*}\to 0, \\
    &(C^*Cx_n|x_n)_{-s*,s}-((A^*+\I t)y_n|x_n)_{-s*,s}\to 0.
  \end{align*}
  Summing both equations and taking the real part gives us
  \begin{equation}\label{eq:ham-speccalc}
    \|B^*y_n\|^2_U+\|Cx_n\|^2_Y=(BB^*y_n|y_n)_{-s,s*}+(C^*Cx_n|x_n)_{-s*,s}
    \to0.
  \end{equation}
  From $(T-\I t)(x_n,y_n)\to 0$ it follows that
  $x_n+(A-\I t)^{-1}BB^*y_n\to 0$ as a limit in $H$. Since 
  $(A-\I t)^{-1}B$ is bounded as an operator $U\to H$ and 
  $B^*y_n\to 0$ by \eqref{eq:ham-speccalc}, this yields $x_n\to 0$.
  Analogously, using $Cx_n\to0$, we get $y_n\to 0$, which is a
  contradiction to $\|x_n\|^2+\|y_n\|^2=1$.
  Therefore $\sigma_{\mathrm{app}}(T)\cap\I\R=\varnothing$.
  Since $\partial\sigma(T)\subset\sigma_{\mathrm{app}}(T)$ and 
  $\I\R\cap\varrho(T)\neq\varnothing$ we conclude that
  in fact $\I\R\subset\varrho(T)$.

  We can now invoke Theorem~\ref{theo:dichot-pert-beta} to show the strict 
  dichotomy
  of $T$. Indeed the assumptions on $A$ imply that $S$ is bisectorial
  and strictly dichotomous. Moreover, by $\I\R\subset\varrho(T)$ and 
  \eqref{eq:ham-pert-est} the conditions 
  \eqref{enumi:dichot-pert-beta:ri} and
  \eqref{enumi:dichot-pert-beta:es}
  of Theorem~\ref{theo:dichot-pert-beta} are satisfied too.
  Hence if $\mD(S)\cap\mD(T)=V_1\cap\mD(T)\subset V$ is dense,
  then $T$ is strictly dichotomous.
  Note that a typical setting in control theory is $H=L^2(\Omega)$ with
  $\Omega\subset\R^n$,
  $A$ is an elliptic differential operator
  and the control and observation operators act on
  traces of functions on the boundary of $\Omega$.
  In this case $\mD(S)\neq\mD(T)$ but 
  $C_0^\infty(\Omega)^2\subset \mD(S)\cap\mD(T)$, i.e.,
  $\mD(S)\cap\mD(T)$ is in fact dense in $V$.
\end{example}


\bigskip
\bigskip
\centerline{\bf Acknowledgements}
\medskip
  This work was partially supported by a research grant of the 
  ``Fachgruppe Mathematik und Informatik'' at the University of Wuppertal
  and FAPA No. PI160322022 of the Facultad de las Ciencias of the Universidad de Los Andes.
\bigskip

\bibliography{lit}
\bibliographystyle{alpha}

\end{document}